\numberwithin{equation}{section}
\theoremstyle{plain}
\newtheorem{thm}{Theorem}
\newtheorem{proposition}{Proposition}
\newtheorem{corollary}{Corollary}
\newtheorem{lemma}{Lemma}
\newtheorem{assumption}{Assumption}
\theoremstyle{definition}
\newtheorem{definition}{Definition}
\theoremstyle{remark}
\newtheorem{remark}{Remark}
\newtheorem{example}{Example}
\newcommand{\II}{\mathcal{I}}
\newcommand{\JJ}{\mathcal{J}}
\title{Lower bounds for invariant statistical models with applications to principal component analysis}
\author{Martin Wahl\thanks{Institut f\"{u}r Mathematik, Humboldt-Universit\"{a}t zu Berlin, Unter den Linden 6, 10099 Berlin, Germany. E-mail: martin.wahl@math.hu-berlin.de 
\newline
\textit{2010 Mathematics Subject Classifcation.} Primary 62H25; secondary 62B10, 60B20\newline
\textit{Key words and phrases.} Covariance operator, principal components, equivariant model, lower bounds, van Trees inequality, Fisher information, special orthogonal group, large deviations.}}
\date{}
\begin{document}
\maketitle
\begin{abstract}
This paper develops nonasymptotic information inequalities for the estimation of the eigenspaces of a covariance operator. These results generalize previous lower bounds for the spiked covariance model, and they show that recent upper bounds for models with decaying eigenvalues are sharp. The proof relies on lower bound techniques based on group invariance arguments which can also deal with a variety of other statistical models.
\end{abstract}

\tableofcontents

\section{Introduction}
\subsection{Motivation}
In this paper we address the problem of deriving lower bounds for the estimation of derived parameters of the eigenspaces of a covariance operator. Motivated by principal component analysis (PCA), our main interest lies in the eigenspace of the $d$ leading eigenvalues.

In an asymptotic framework, such lower bounds can be obtained by existing results for statistical models satisfying the local asymptotic normality (LAN) condition, such as the local asymptotic minimax theorem due to H\'{a}jek \cite{MR0400513}; see also the monographs by  Ibragimov and Has'minskii \cite{MR620321} and van der Vaart \cite{MR1652247}. Extensions are presented in Koltchinskii, L\"{o}ffler and Nickl \cite{MR4065170} and Koltchinskii \cite{K17} for the special cases of estimating linear functionals of principal components and more general smooth functionals of covariance operators, respectively. Both papers study an asymptotic scenario in which the effective rank of the covariance operator is allowed to increase with the number of observations $n$, and provide exactly matching asymptotic lower bounds based on the classical van Trees inequality.

Nonasymptotic lower bounds for the estimation of the eigenspace of the $d$ leading eigenvalues have been established in Cai, Ma and Wu \cite{MR3161458,MR3334281} and Vu and Lei \cite{MR3161452} for a spiked covariance model with two different eigenvalues. This simple worst-case model can be parametrized by the Grassmann manifold, allowing to apply lower bounds under metric entropy conditions. This (Grassmann) approach has been applied to different principal subspace estimation problems and to different spiked structures (see, e.g., Cai and Zhang \cite{MR3766946} and Cai, Li and Ma \cite{CLM20} and the references therein).

In this paper we provide information inequalities for the estimation of derived parameters of the eigenspaces of a covariance operator that are nonasymptotic on the one hand and can be applied to arbitrary sequences of eigenvalues on the other. In particular, in the case of the eigenspace of the $d$ leading eigenvalues, we obtain minimax lower bounds that match recent nonasymptotic upper bounds for models with decaying eigenvalues derived in Mas and Ruymgaart \cite{MR3300524}, Rei\ss{} and Wahl \cite{ReissWahl} and Jirak and Wahl \cite{MR4052188,JW18}. To achieve this, we develop a new approach for the construction of lower bounds for statistical models equipped with an invariant group action. In particular, we establish a Bayesian version of the Chapman-Robbins inequality and a van Trees inequality in the context of equivariant statistical models. Besides principal component analysis, our lower bound techniques can deal with a variety of other statistical models. To illustrate this further, we also discuss the matrix denoising problem and the classical nonparametric density estimation problem.

The use of group invariance arguments has turned out to be crucial in multivariate statistics (see, e.g., Muirhead \cite{MR652932}, Farrell \cite{MR770934}, Eaton \cite{MR1089423} and Johnstone \cite{MR2334195}), and our approach also relies on such principles. First, our approach is based on a van Trees inequality for equivariant models, with the reference measure being the Haar measure on the orthogonal group. Second, in order to derive tight nonasymptotic lower bounds, we have to study explicit prior distributions. We propose a density based on the exponential of the trace that can be analyzed by large deviations techniques (see, e.g., Meckes \cite{MR3971582} and Hiai and Petz \cite{MR1746976}).

\subsection{Main minimax lower bound}\label{sec_main_result}
Let us present our main minimax lower bound in the context of PCA. Let $\mathcal{H}$ be a separable Hilbert space with inner product $\langle\cdot,\cdot\rangle$ and induced norm $\|\cdot\|$, and let $L_1^{+}(\mathcal{H})$ be the class of all bounded linear operators $\Sigma:\mathcal{H}\rightarrow \mathcal{H}$ that are symmetric, positive and of trace class. For $\Sigma\in L_1^{+}(\mathcal{H})$, let $\lambda_1(\Sigma)\geq\lambda_2(\Sigma)\geq \cdots>0$ be the non-increasing sequence of positive and summable eigenvalues of $\Sigma$, and let $u_1(\Sigma),u_2(\Sigma),\dots$ be a sequence of corresponding eigenvectors. We shall assume that $u_1(\Sigma),u_2(\Sigma),\dots$ form an orthonormal basis of $\mathcal{H}$.

For a fixed  sequence $\lambda_1\geq \lambda_2\geq \cdots > 0$ of positive and summable real numbers, we define the parameter class
\begin{align*}
\Theta_\lambda=\{\Sigma\in L_1^{+}(\mathcal{H}):\lambda_j(\Sigma)=\lambda_j\text{ for all }j\geq1\},
\end{align*}
consisting of all $\Sigma\in L_1^{+}(\mathcal{H})$ with spectrum $(\lambda_1,\lambda_2, \dots)$, and consider the statistical model defined by the family of Gaussian measures 
\begin{equation}\label{eq_stat_experiment}
(\mathbb{P}_\Sigma)_{\Sigma\in \Theta_\lambda},\qquad\mathbb{P}_\Sigma=\mathcal{N}(0, \Sigma)^{\otimes n},
\end{equation} 
where $\mathcal{N}(0, \Sigma)$ denotes a Gaussian measure in $\mathcal{H}$ with expectation zero and covariance operator $\Sigma$ (see, e.g., Chapter 2 in Da Prato and Zabczyk \cite{MR3236753} for some background) and $n\geq 1$ is a natural number. 
An observation in this model consists of
$n$ independent $\mathcal{H}$-valued Gaussian random variables $X_1,\dots,X_n$ with expectation zero and covariance $\Sigma\in \Theta_\lambda$, and we will write $\mathbb{E}_\Sigma$ to denote expectation with respect to $X_1,\dots,X_n$ having law $\mathbb{P}_\Sigma$.

For $\mathcal{I}\subseteq \{1,2,\dots\}$, the main parameter of interest is the orthogonal projection onto the eigenspace of the eigenvalues $\{\lambda_i(\Sigma):i\in\II\}$ given by
\begin{align*}
P_{\II}(\Sigma)=\sum_{i\in\II}u_i(\Sigma)\otimes u_i(\Sigma),
\end{align*}
where $u_i(\Sigma)\otimes u_i(\Sigma)$ denotes the orthogonal projection onto the span of $u_i(\Sigma)$. Note that $P_{\II}(\Sigma)$ is uniquely defined in case that the eigenvalues with indices in $\II$ are separated from the rest of the spectrum, that is provided that $\lambda_i(\Sigma)-\lambda_{j}(\Sigma)\neq 0$ for every $i\in\II,j\notin \II$.

Our first main result provides a nonasymptotic minimax lower bound for the estimation of the parameter $P_{\II}(\Sigma)$ in the squared Hilbert-Schmidt loss.

\begin{thm}\label{thm_minimax_Hilbert} There exists an absolute constant $c\in(0,2)$ such that
\begin{align*}
\inf_{\hat P}\sup_{\Sigma\in \Theta_\lambda}\mathbb{E}_\Sigma\|\hat P-P_{\II}(\Sigma)\|_{\operatorname{HS}}^2\geq c\cdot\max_{\JJ\subseteq \{1,2,\dots\}}\sum_{i\in \II\cap \JJ}\sum_{j\in \JJ\setminus \II}\Big(\frac{\lambda_i\lambda_j}{n(\lambda_i-\lambda_j)^2}\wedge \frac{1}{|\JJ|}\Big),
\end{align*}
where the infimum is taken over all estimators $\hat P=\hat P(X_1,\dots, X_n)$ with values in the space of all Hilbert-Schmidt operators on $\mathcal{H}$, $\|\cdot\|_{\operatorname{HS}}$ denotes the Hilbert-Schmidt norm and $x\wedge y=\min(x,y)$.
\end{thm}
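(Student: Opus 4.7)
The plan is to fix a subset $\JJ\subseteq\{1,2,\dots\}$ attaining the maximum on the right-hand side, reduce to a sub-family of $\Theta_\lambda$ generated by rotations of the eigenvectors indexed by $\JJ$, and apply the equivariant van Trees inequality developed earlier in the paper to a tilted prior on the special orthogonal group. To set up the reduction, fix a reference operator $\Sigma_0=\sum_k\lambda_k\bar u_k\otimes\bar u_k\in\Theta_\lambda$, and let $G=SO(\mathcal{H}_\JJ)$ act on $\mathcal{H}_\JJ=\operatorname{span}\{\bar u_k:k\in\JJ\}$ and trivially on its orthogonal complement. For $U\in G$, the operator $\Sigma_U=U\Sigma_0U^*$ still lies in $\Theta_\lambda$, and since $P_\II(\Sigma_U)=UP_\II(\Sigma_0)U^*$, the family $(\mathbb{P}_{\Sigma_U},P_\II(\Sigma_U))_{U\in G}$ is an equivariant sub-model; its Bayes risk under any prior on $G$ lower bounds the minimax risk.

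At the infinitesimal level, for $i\neq j$ in $\JJ$ let $L_{ij}=\bar u_j\bar u_i^*-\bar u_i\bar u_j^*$ generate the rotation in the $(\bar u_i,\bar u_j)$-plane. A direct computation gives $[L_{ij},\Sigma_0]=(\lambda_i-\lambda_j)(\bar u_j\bar u_i^*+\bar u_i\bar u_j^*)$, so the Fisher information per observation of the Gaussian model in direction $L_{ij}$ equals $\tfrac12\operatorname{tr}((\Sigma_0^{-1}[L_{ij},\Sigma_0])^2)=(\lambda_i-\lambda_j)^2/(\lambda_i\lambda_j)$. Similarly, $[L_{ij},P_\II(\Sigma_0)]$ vanishes unless exactly one of $i,j$ lies in $\II$, in which case it equals $\pm(\bar u_j\bar u_i^*+\bar u_i\bar u_j^*)$ with squared Hilbert--Schmidt norm $2$. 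The generators $L_{ij}$ are pairwise orthogonal in the Lie algebra and the images $[L_{ij},P_\II(\Sigma_0)]$ have pairwise disjoint off-diagonal supports, so the multi-parameter van Trees bound decouples over the pairs $(i,j)\in(\II\cap\JJ)\times(\JJ\setminus\II)$.

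I would endow $G$ with the density with respect to Haar measure $\pi(U)\propto \exp(\beta\operatorname{tr}(U))$, choosing $\beta$ of order $|\JJ|$. In exponential coordinates $U=\exp(A)$ with $A$ antisymmetric, the expansion $\operatorname{tr}(U)=|\JJ|-\tfrac12\|A\|_F^2+O(\|A\|_F^4)$ shows that $\pi$ is locally Gaussian in the Lie algebra with precision of order $\beta$; in particular the Fisher information of $\pi$ in each direction $L_{ij}$ is of order $|\JJ|$. Inserting these ingredients into the equivariant van Trees inequality and summing over the admissible pairs yields
\[
\inf_{\hat P}\sup_{\Sigma\in\Theta_\lambda}\mathbb{E}_\Sigma\|\hat P-P_\II(\Sigma)\|_{\operatorname{HS}}^2\;\gtrsim\sum_{i\in\II\cap\JJ}\sum_{j\in\JJ\setminus\II}\frac{1}{n(\lambda_i-\lambda_j)^2/(\lambda_i\lambda_j)+C|\JJ|},
\]
and the elementary inequality $1/(x+y)\asymp(1/x)\wedge(1/y)$ identifies the right-hand side, up to an absolute constant, with $\sum_{i,j}(\lambda_i\lambda_j/[n(\lambda_i-\lambda_j)^2])\wedge(1/|\JJ|)$. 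Since $\JJ$ is arbitrary, maximizing over $\JJ$ concludes the proof.

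The technical heart is the third step. Making the informal Gaussian picture on $SO(|\JJ|)$ rigorous requires controlling the normalizing constant of $\exp(\beta\operatorname{tr}(U))$ and showing via concentration or large-deviations arguments on the orthogonal group that the Fisher information of $\pi$ is genuinely of order $|\JJ|$ (and not larger), that $\mathbb{E}_\pi[U^*AU]\approx A$ so that the prior-expected derivative of $U\mapsto UP_\II(\Sigma_0)U^*$ agrees with $[L_{ij},P_\II(\Sigma_0)]$ up to negligible error, and that the global curvature of $G$ does not spoil the van Trees bound. This is precisely where the paper's proposed trace-exponential density on $G$ must be analyzed.
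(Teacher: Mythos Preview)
Your proposal is correct and follows essentially the same route as the paper: reduce to a finite-dimensional rotation sub-model indexed by $\JJ$, apply the equivariant van Trees inequality with the trace-exponential prior $\pi(U)\propto\exp(hp\operatorname{tr} U)$ on $SO(p)$ (your $\beta$ is the paper's $hp$ with $p=|\JJ|$), compute the Fisher information and the derivative of $P_\II$ in the directions $L^{(ij)}$, and finish with $(x+y)^{-1}\gtrsim x^{-1}\wedge y^{-1}$. The paper's Section~\ref{sec_design_optimal_densities} carries out exactly the large-deviations analysis you flag as the technical heart, using Weyl's integration formula rather than the local Gaussian heuristic, to show that the prior's Fisher information contributes only $O(p)$ while the prefactor stays bounded away from zero.
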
 
Theorem \ref{thm_minimax_Hilbert} is the consequence of a more general Bayes risk bound presented in Section \ref{sec_reduction_finite_dim}.
%

%

\subsection{Examples and discussion}\label{sec_applications}
Let us discuss some consequences of Theorem \ref{thm_minimax_Hilbert}. 
For a subset $\II\subseteq\{1,2,\dots\}$ and a sequence of eigenvalues, we abbreviate the minimax risk over $\Theta_\lambda$ as follows
\begin{align}\label{eq_asymptotically_efficient}
R^*_{n,\lambda,\II}=\inf_{\hat P}\sup_{\Sigma\in \Theta_\lambda}\mathbb{E}_\Sigma\|\hat P-P_{\II}(\Sigma)\|_{\operatorname{HS}}^2,
\end{align}
where the infimum is taken over all estimators $\hat P=\hat P(X_1,\dots, X_n)$ taking values in the Hilbert space of all Hilbert-Schmidt operators on $\mathcal{H}$. Note that the minimax risk increases if we restrict the infimum to all estimators taking values in the class of all orthogonal projections of rank $|\II|$, in which case the Hilbert-Schmidt distance $\|\hat P-P_{\II}(\Sigma)\|_{\operatorname{HS}}$ is equal to $\sqrt{2}$ times the Euclidean norm of the sines of the canonical angles between the subspaces corresponding to $\hat P$ and $P_{\II}(\Sigma)$ (see, e.g., \cite[Chapter VII.1]{MR1477662}).

Let us start by briefly describing the classical asymptotic scenario $n\rightarrow \infty$. Combining Theorem \ref{thm_minimax_Hilbert} (resp.~the proof in Section \ref{proof_thm_lower_bound_Bayes_risk}, where it is shown that for each $\delta\in(0,2)$, the constant $c$ in Theorem \ref{thm_minimax_Hilbert} can be replaced by $2-\delta$ provided that $1/|\JJ|$ is replaced by $c_\delta/|\JJ|$ with a constant $c_\delta>0$ depending only on $\delta$) with standard perturbation bounds for the empirical covariance operator, we get
\begin{align}\label{eq_asymptotic_limit}
\lim_{n\rightarrow \infty}n\cdot R_{n,\lambda,\II}^*=2\sum_{i\in \II}\sum_{j\notin \II}\frac{\lambda_i\lambda_j}{(\lambda_i-\lambda_j)^2},
\end{align}
where the asymptotic limit is achieved by the estimator $\hat P=P_{\II}(\hat\Sigma)$ with empirical covariance operator $\hat\Sigma=n^{-1}\sum_{i=1}^nX_i\otimes X_i$ (see, e.g., Hsing and Eubank \cite{MR3379106} and Jirak and Wahl \cite[Equation (1.3)]{MR4052188} for the corresponding upper bound). More specifically, our approach also implies that $P_{\II}(\hat\Sigma)$ is asymptotically efficient in the sense of \cite[Equation (9.4)]{MR620321} (see Section \ref{sec_simple_example} and Corollary \ref{cor_lower_bound_SP_preliminary} below for more details).

Let us turn to nonasymptotic lower bounds. As stated in \cite{MR3161458}, it is highly nontrivial to obtain lower bounds which depend optimally on all parameters, in particular the eigenvalues and $\II$. Indeed, in contrast to the asymptotic setting, in which the above results can also be derived from the local asymptotic minimax theorem, it seems unavoidable to use some more sophisticated facts on the underlying parameter class of all orthonormal bases in order to obtain nonasymptotic lower bounds. A fundamental result, obtained in Cai, Ma and Wu \cite{MR3161458} and Vu and Lei \cite{MR3161452}, provides a nonasymptotic lower bound for a spiked covariance model with two groups of eigenvalues.

\begin{corollary}[\cite{MR3161458}]\label{cor_scm} For $\mathcal{H}=\mathbb{R}^p$ and the sequence of eigenvalues $\lambda_1=\dots=\lambda_d>\lambda_{d+1}=\dots=\lambda_p>0$, we have
\[
R^*_{n,\lambda,\{1,\dots,d\}}\geq c\cdot\min\Big(\frac{d(p-d)}{n}\frac{\lambda_d\lambda_{d+1}}{(\lambda_d-\lambda_{d+1})^2},d,p-d\Big)
\]
for some absolute constant $c>0$.
\end{corollary}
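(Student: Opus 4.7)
The plan is to specialize Theorem \ref{thm_minimax_Hilbert} by a single carefully chosen subset $\JJ$. In the spiked setting, the only eigenvalue gap that can appear between an index in $\II=\{1,\ldots,d\}$ and an index outside $\II$ is $\lambda_d-\lambda_{d+1}$, so every pair $(i,j)$ with $i\in\II$ and $j\in\{d+1,\ldots,p\}$ contributes the identical summand $\frac{\lambda_d\lambda_{d+1}}{n(\lambda_d-\lambda_{d+1})^2}\wedge \frac{1}{|\JJ|}$; taking any strict subset of $\{1,\ldots,p\}$ would simply discard terms without enlarging the individual summands. The natural choice is therefore $\JJ=\{1,\ldots,p\}$, which gives $|\II\cap\JJ|=d$, $|\JJ\setminus\II|=p-d$, and $|\JJ|=p$.

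With this choice the double sum in Theorem \ref{thm_minimax_Hilbert} collapses into $d(p-d)$ identical terms, so
\begin{align*}
R^*_{n,\lambda,\{1,\ldots,d\}} \;\geq\; c\, d(p-d)\Big(\frac{\lambda_d\lambda_{d+1}}{n(\lambda_d-\lambda_{d+1})^2}\wedge \frac{1}{p}\Big) \;=\; c\min\Big(\frac{d(p-d)}{n}\frac{\lambda_d\lambda_{d+1}}{(\lambda_d-\lambda_{d+1})^2},\, \frac{d(p-d)}{p}\Big).
\end{align*}
It remains only to pass from $d(p-d)/p$ to $\min(d,p-d)$ up to an absolute constant, for which I would use the elementary inequality $d(p-d)/p\geq \tfrac12\min(d,p-d)$: if $d\leq p/2$ then $(p-d)/p\geq 1/2$ and so $d(p-d)/p\geq d/2$, and symmetrically if $d\geq p/2$ then $d(p-d)/p\geq (p-d)/2$. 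Absorbing the factor $1/2$ into $c$ converts the right-hand side into $c\min\big(\tfrac{d(p-d)}{n}\tfrac{\lambda_d\lambda_{d+1}}{(\lambda_d-\lambda_{d+1})^2},\,d,\,p-d\big)$, which is the claimed bound.

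I do not anticipate any real obstacle: the corollary is essentially a substitution exercise once Theorem \ref{thm_minimax_Hilbert} is in hand. The only genuine choice is that of $\JJ$, which has to be picked so as to maximize the combinatorial factor $|\II\cap\JJ||\JJ\setminus\II|=d(p-d)$ while keeping the cap $1/|\JJ|$ as small as possible; both goals are simultaneously met by the full index set $\{1,\ldots,p\}$, reflecting the fact that the spiked model carries no structure beyond its two-block spectrum and that the $p$-term capping in Theorem \ref{thm_minimax_Hilbert} is precisely what produces the ``hard floor'' $\min(d,p-d)$ in the corollary.
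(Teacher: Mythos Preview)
Your proposal is correct and matches the paper's own argument essentially verbatim: the paper deduces Corollary~\ref{cor_scm} from Theorem~\ref{thm_minimax_Hilbert} by taking $\JJ=\{1,\dots,p\}$ and invoking the elementary inequality $2d(p-d)/p\ge \min(d,p-d)$, exactly as you do.
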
 
Corollary \ref{cor_scm} is sharp up to a constant (see, e.g., \cite[Theorem 5]{MR3161458} for a corresponding upper bound). Together the lower and upper bound can be viewed as a nonasymptotic version of the phase transition phenomenon for empirical eigenvectors (see, e.g., \cite{P07,N08}).

Corollary \ref{cor_scm} follows from Theorem \ref{thm_minimax_Hilbert} applied with  $\JJ=\{1,\dots,p\}$, using that $2d(p-d)/p\geq \min(d,p-d)$. In \cite{MR3161458,MR3161452}, the proof of Corollary \ref{cor_scm} is based on the behavior of the local metric entropy of the Grassmann manifold. In fact, the parameter class can be written as $\Theta_\lambda=(\lambda_d-\lambda_{d+1})\mathcal{P}_d+\lambda_{d+1}I_p$, with $I_p$ being the identity matrix and $\mathcal{P}_d$ being the class of all orthogonal projections of rank $d$, for which we have the following two facts:
\begin{enumerate}
\item[(i)] For $\Sigma_i=(\lambda_d-\lambda_{d+1}) P_i+\lambda_{d+1}I_p$ with $P_i\in\mathcal{P}_d$, $i=1,2$, we have, with Kullback-Leibler divergence $K(\cdot,\cdot)$,
\begin{align*}
 K(\mathbb{P}_{\Sigma_1},\mathbb{P}_{\Sigma_2})=\frac{n(\lambda_d-\lambda_{d+1})^2}{4\lambda_d \lambda_{d+1}}\|P_1-P_2\|_{\operatorname{HS}}^2.
\end{align*}
\item[(ii)] For all integers $1\leq d\leq p$ such that $d\leq p-d$ and all real number $\delta\in(0,\sqrt{2d}]$, the covering number $ \mathcal{N}(\mathcal{P}_d,\|\cdot\|_{\operatorname{HS}},\delta)$ of $\mathcal{P}_d$ with respect to the Hilbert-Schmidt norm satisfies, 
\begin{align*}\Big(\frac{c_0\sqrt{d}}{\delta}\Big)^{d(p-d)}\leq \mathcal{N}\big(\mathcal{P}_d,\|\cdot\|_{\operatorname{HS}},\delta\big)\leq \Big(\frac{c_1\sqrt{d}}{\delta}\Big)^{d(p-d)}
\end{align*} 
for some absolute constants $c_0,c_1>0$. In particular, for any $\alpha\in(0,1)$ and $\delta\in(0,\sqrt{2d}]$ there are $P_1,\dots,P_m\in \mathcal{P}_d$ with $m\geq (c_0/(\alpha c_1 ))^{d(p-d)}$ and $\alpha\delta\leq \|P_i-P_j\|_{\operatorname{HS}}\leq 2\delta$ for every $i\neq j$.
\end{enumerate} 

For (i) see \cite[Lemma A.2]{MR3161452}, (ii) can be found in \cite[Proposition 8]{MR1665590} and \cite[Lemma 1]{MR3161458}. Combining (i) and (ii), Corollary \ref{cor_scm} follows from Fano's lemma (cf. \cite[Lemma 3]{MR1462963}).

The interesting point here is that we obtain Corollary \ref{cor_scm} by an entirely different approach via invariance arguments. This will allow us to take advantage of the Fisher geometry of the statistical model \eqref{eq_stat_experiment_version} more efficiently, and leads to the improvement of Theorem \ref{thm_minimax_Hilbert} over Corollary \ref{cor_scm}. A first extension of Corollary \ref{cor_scm} is as follows.

\begin{corollary}\label{cor_scm_gen} For $\mathcal{H}=\mathbb{R}^p$, $1\leq d\leq  p-d$ and the sequence of eigenvalues $\lambda_1\geq \dots\geq \lambda_d>\lambda_{d+1}=\dots=\lambda_p>0$, we have
\[
R^*_{n,\lambda,\{1,\dots,d\}}\geq  c\cdot\sum_{i\leq d}\min\Big( \frac{p}{n}\frac{\lambda_i\lambda_{d+1}}{(\lambda_i-\lambda_{d+1})^2},1\Big)
\]
for some absolute constant $c>0$.
\end{corollary}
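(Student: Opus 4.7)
The plan is to apply Theorem~\ref{thm_minimax_Hilbert} with the specific choice $\JJ=\{1,\dots,p\}$ and then simplify, exploiting both the degeneracy $\lambda_{d+1}=\dots=\lambda_p$ and the hypothesis $d\leq p-d$. With $\II=\{1,\dots,d\}$ one has $\II\cap\JJ=\{1,\dots,d\}$, $\JJ\setminus\II=\{d+1,\dots,p\}$, and $|\JJ|=p$, so Theorem~\ref{thm_minimax_Hilbert} yields
\[
R^*_{n,\lambda,\{1,\dots,d\}}\geq c\sum_{i=1}^d\sum_{j=d+1}^p\Big(\frac{\lambda_i\lambda_{j}}{n(\lambda_i-\lambda_{j})^2}\wedge \frac{1}{p}\Big)=c(p-d)\sum_{i=1}^d\Big(a_i\wedge\frac{1}{p}\Big),
\]
where $a_i=\lambda_i\lambda_{d+1}/(n(\lambda_i-\lambda_{d+1})^2)$ and we used that $\lambda_j=\lambda_{d+1}$ for every $j\geq d+1$, so the inner sum contributes a factor $p-d$.

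The second step is to compare $(p-d)(a_i\wedge p^{-1})$ with $\min(pa_i,1)$, which is precisely the quantity appearing in the target inequality. The hypothesis $d\leq p-d$ gives $p-d\geq p/2$. A case distinction finishes it: if $a_i\leq 1/p$ then $(p-d)(a_i\wedge p^{-1})=(p-d)a_i\geq (p/2)a_i=\tfrac12\min(pa_i,1)$ since $pa_i\leq 1$; and if $a_i>1/p$ then $(p-d)(a_i\wedge p^{-1})=(p-d)/p\geq 1/2=\tfrac12\min(pa_i,1)$ since $pa_i>1$. Consequently
\[
R^*_{n,\lambda,\{1,\dots,d\}}\geq \frac{c}{2}\sum_{i=1}^d\min\Big(\frac{p}{n}\frac{\lambda_i\lambda_{d+1}}{(\lambda_i-\lambda_{d+1})^2},1\Big),
\]
which is the asserted bound (with $c/2$ in place of $c$).

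I do not anticipate any substantive obstacle: once Theorem~\ref{thm_minimax_Hilbert} is available, the corollary is a purely algebraic consequence of choosing $\JJ$ to be the full index set and tracking the elementary inequality $(p-d)\min(a,1/p)\geq \tfrac12\min(pa,1)$ valid under $d\leq p-d$. The only point worth double-checking is that the constant $c$ in Theorem~\ref{thm_minimax_Hilbert} is indeed absolute and does not degrade when $|\JJ|=p$ grows, which is guaranteed by the statement of the theorem.
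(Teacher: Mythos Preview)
Your proof is correct and follows exactly the approach indicated in the paper: apply Theorem~\ref{thm_minimax_Hilbert} with $\JJ=\{1,\dots,p\}$ and use $p-d\geq p/2$. The case distinction you spell out is the elementary verification behind the paper's one-line remark.
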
   
Corollary \ref{cor_scm_gen} follows from Theorem \ref{thm_minimax_Hilbert} applied with  $\JJ=\{1,\dots,p\}$, using that $p-d\geq p/2$. Let us give an interpretation of Corollary \ref{cor_scm_gen} by comparing it to upper bounds for the eigenprojections $\hat P=P_{\{1,\dots,d\}}(\hat\Sigma)$ of the empirical covariance operator $\hat\Sigma$ established in \cite{ReissWahl}. To see this, write
\begin{align*}
\|P_{\{1,\dots,d\}}(\hat\Sigma)-P_{\{1,\dots,d\}}(\Sigma)\|_{\operatorname{HS}}^2&=2\sum_{i\leq d}\sum_{j>d}\| P_i\hat P_{j}\|_{\operatorname{HS}}^2,
\end{align*}
where we abbreviated $P_i=P_{\{i\}}(\Sigma)$ and $\hat P_{j}=P_{\{j\}}(\hat\Sigma)$. Now, there are two completely different possibilities to bound the latter projector norms. First, we always have $\sum_{j>d}\| P_i\hat P_{j}\|_{\operatorname{HS}}^2\leq 1$. Second, one can apply perturbation bounds, and the first part in the minimum in Corollary~\ref{cor_scm_gen} gives the size of the linear perturbation approximation of $\sum_{j>d}\| P_i\hat P_{j}\|_{\operatorname{HS}}^2$ (see, e.g., \cite{MR650934,MR4052188}). In particular, Corollary \ref{cor_scm_gen} implies that one can not improve upon a mixture of both inequalities.

A main focus of Theorem \ref{thm_minimax_Hilbert} is on PCA in infinite dimensions, typically encountered in functional data analysis and kernel methods in machine learning. For instance, Sobolev kernels usually lead to polynomially decaying eigenvalues, while smooth radial kernels have nearly exponentially decaying eigenvalues (cf.~\cite{conf/colt/SteinwartHS09,MR2450103,DBLP:conf/colt/Belkin18} and the references therein). In such scenarios, it is, in contrast to the spiked covariance model, no longer sufficient to study the first few principal components. Instead, it becomes more important to understand the behavior of eigenspaces for growing values of $d$ (see, e.g., \cite{MR2332269} and \cite{MR3833647} for two accounts on principal component regression and more general spectral regularization methods, respectively).

\begin{corollary}\label{cor_polynomial}
For each $\alpha>0$, there is a constant $c>0$ depending only on $\alpha$ such that the following holds.
\begin{itemize}
\item[(i)] If $\lambda_j=j^{-\alpha-1}$ for every $j\geq 1$, then we have
\begin{align*}
R^*_{n,\lambda,\{d\}}&\geq  c\cdot\min\Big(1,\frac{d^2}{n}\Big),\\
R^*_{n,\lambda,\{1,\dots,d\}}&\geq c\cdot\min\Big(d,\frac{d^2}{n}\Big(1+\log_+\Big(d\wedge\sqrt{\frac{n}{d}}\Big)\Big)\Big)
\end{align*}
with $\log_+(x)=0\vee\log(x)$.
\item[(ii)] If $\lambda_j=e^{-\alpha j}$ for every $j\geq 1$, then we have 
\begin{align*}
R^*_{n,\lambda,\{d\}}\geq  \frac{c}{n},\quad R^*_{n,\lambda,\{1,\dots,d\}}\geq  \frac{c}{n}.
\end{align*}
\end{itemize}  
\end{corollary}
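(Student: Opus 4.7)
The plan is to invoke Theorem \ref{thm_minimax_Hilbert} with carefully chosen index sets $\JJ$ and to combine the resulting lower bounds with the trivial upper bound $R^*_{n,\lambda,\II}\le |\II|$ (take $\hat P=0$). For part (i) with $\II=\{d\}$, I would choose $\JJ=\{d,d+1\}$, so that only the single pair $(d,d+1)$ contributes. The mean value theorem applied to $t\mapsto t^{-\alpha-1}$ gives $\lambda_d-\lambda_{d+1}\asymp_\alpha d^{-\alpha-2}$ and $\lambda_d\lambda_{d+1}\asymp d^{-2\alpha-2}$, hence $\lambda_d\lambda_{d+1}/(n(\lambda_d-\lambda_{d+1})^2)\asymp_\alpha d^2/n$, and Theorem \ref{thm_minimax_Hilbert} yields the claimed $c\min(1,d^2/n)$ after absorbing the $\alpha$-dependent factor. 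For part (ii) the same $\JJ=\{d,d+1\}$ works in both choices of $\II$: one computes $\lambda_d-\lambda_{d+1}=e^{-\alpha d}(1-e^{-\alpha})$ and $\lambda_d\lambda_{d+1}=e^{-\alpha(2d+1)}$, so the ratio equals $e^{-\alpha}/((1-e^{-\alpha})^2 n)\asymp_\alpha 1/n$, uniformly in $d$. Enlarging $\JJ$ in this geometric regime cannot improve matters beyond constants, because off-diagonal pairs contribute a geometrically decaying sum.

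For the more delicate bound $R^*_{n,\lambda,\{1,\dots,d\}}$ in part (i), I would take $\JJ=\{d-K,\dots,d+K\}$ with $K=\lfloor d/2\rfloor$ (the case $d=1$ reduces to the single-eigenvalue estimate). For any pair $(i,j)=(d-k,d+\ell)$ with $k\in[0,K]$ and $\ell\in[1,K]$, we have $i,j\asymp d$, so the mean value theorem delivers $\lambda_i-\lambda_j\asymp_\alpha (k+\ell)d^{-\alpha-2}$ and $\lambda_i\lambda_j\asymp d^{-2\alpha-2}$, whence
\[
\frac{\lambda_i\lambda_j}{n(\lambda_i-\lambda_j)^2}\asymp_\alpha\frac{d^2}{n(k+\ell)^2}.
\]
Reparametrising the double sum by $m=k+\ell$, with multiplicity $N(m)\asymp m$ for $m\in[1,K]$, reduces the task to estimating
\[
\sum_{m=1}^{K} m\cdot\min\Bigl(\frac{c_\alpha d^2}{nm^2},\frac{c}{K}\Bigr).
\]
I would split this at the threshold $m_\star\asymp_\alpha d\sqrt{K/n}$, where the two arguments of the $\min$ agree. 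Provided $m_\star\le K$ (equivalently $K\ge d^2/n$), the portion $m\le m_\star$ contributes $\asymp d^2/n$, while the portion $m>m_\star$ contributes $\asymp (d^2/n)\log(K/m_\star)=(d^2/n)\log(\sqrt{Kn}/d)$. If instead $m_\star<1$, the first argument of the $\min$ dominates for every $m\in[1,K]$ and the sum evaluates to $(d^2/n)\log K$.

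Choosing $K=\lfloor d/2\rfloor$ then makes the log factor equal to $\log\sqrt{n/d}$ when $d^3>n$ and at least $\log d$ when $d^3\le n$, i.e.\ in both subregimes one obtains $\log_+(d\wedge\sqrt{n/d})$, yielding the desired $(d^2/n)(1+\log_+(d\wedge\sqrt{n/d}))$ factor as long as $d\le n/2$. In the remaining regime $d>n/2$ the trivial bound $R^*_{n,\lambda,\{1,\dots,d\}}\le d$ combined with the outer $\min$ in the corollary already gives the claim. I expect the main obstacle to lie in the bookkeeping of the double-sum analysis: one has to check that the local approximation $\lambda_i-\lambda_j\asymp_\alpha (j-i)d^{-\alpha-2}$ holds uniformly on $\JJ\times\JJ$ with constants depending only on $\alpha$, to control the exact multiplicity of pairs $(k,\ell)$ with $k+\ell=m$, and to ensure that the split at $m_\star$ delivers $\log_+(d\wedge\sqrt{n/d})$ in every regime without spurious factors; part (ii) is comparatively routine because the relevant contributions decay geometrically in $k+\ell$ and no log factor can arise.
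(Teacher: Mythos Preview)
Your approach is essentially the same as the paper's: the paper also uses $\JJ=\{d,d+1\}$ for the single-index cases and for part~(ii), and $\JJ=\{j:d/2<j\le 3d/2\}$ for the hard case in part~(i), then reparametrizes the double sum by $k=j-i$ (your $m$), observes the multiplicity is exactly $k$, and reduces to the elementary estimate
\[
\sum_{k=1}^{m}\Big(\frac{x}{k}\wedge\frac{k}{m}\Big)\ge c\min\Big(m,\,x+x\log_+\Big(m\wedge\sqrt{\tfrac{m}{x}}\Big)\Big),
\]
which the paper isolates as a separate lemma and proves by the same threshold split you describe.

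There is one genuine gap. In the regime $d>n/2$ (equivalently $m_\star>K$) you write that ``the trivial bound $R^*_{n,\lambda,\{1,\dots,d\}}\le d$ combined with the outer $\min$ in the corollary already gives the claim.'' This is backwards: an upper bound on $R^*$ cannot establish a lower bound, no matter what the outer $\min$ says. The claim in this regime is $R^*\ge c\min(d,X)$ with $X\ge d/2$, so you still need to prove $R^*\ge c'd$. The fix is immediate and already implicit in your setup: when $m_\star>K$, every term of your sum takes the second argument of the minimum, so
\[
\sum_{m=1}^{K} m\cdot\frac{c}{K}=\frac{c(K+1)}{2}\asymp d,
\]
which delivers the required lower bound of order $d$. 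The paper's lemma handles this as the case $x\ge m$, where the sum equals $(m+1)/2$ exactly.
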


Corollary \ref{cor_polynomial} follows from applying Theorem \ref{thm_minimax_Hilbert} with $\JJ=\{d,d+1\}$ and $\JJ=\{j:d/2< j\leq 3d/2\}$ respectively. See Appendix \ref{sec_appendix} for the details of the second claim in (i).
In the non-trivial regime $d^2\leq n$ (note that $R^*_{n,\lambda,\{d\}}\leq 1$, as can be seen by considering the zero estimator), Corollary \ref{cor_polynomial} (i) can be written as
\begin{align}\label{eq_cor_polynomial}
R^*_{n,\lambda,\{d\}}\geq  c\frac{d^2}{n}\quad\text{and}\quad
R^*_{n,\lambda,\{1,\dots,d\}}\geq c\frac{d^2\log (d)}{n}.
\end{align}
A corresponding non-aymptotic upper bound has been first obtained in Mas and Ruymgaart \cite{MR3300524}, yet with additional $\log$ factors in $n$ and $d$ and for the smaller operator norm instead of the Hilbert-Schmidt norm. 
In case of the Hilbert-Schmidt norm, Jirak and Wahl \cite{JW18,MR4052188} showed that the empirical projectors $P_{\{d\}}(\hat\Sigma)$ and $P_{\{1,\dots,d\}}(\hat\Sigma)$ achieve, with high probability, the upper bounds $Cd^2/n$ and $Cd^2\log(d)/n$, respectively, as long as $d^2\log^2 (d)\leq cn$. This implies that the lower bounds in \eqref{eq_cor_polynomial} are, up to a constant, sharp in the latter regime. 
In case of exponentially decaying eigenvalues, matching upper bounds have been derived in Wahl \cite{Wahl} and Rei\ss{} and Wahl \cite{ReissWahl} for the case of $\II=\{d\}$ and $\II=\{1,\dots,d\}$, respectively.

We conclude this section by describing some extensions and open problems. Firstly, while our results are restricted to the Hilbert-Schmidt loss, we leave it for further research to extent our bounds to other loss functions, such as the operator norm and the excess risk loss. In particular, since the excess risk can be written as a weighted squared loss (cf.~\cite[Lemma 2.6]{ReissWahl}), lower bounds for the excess risk can be established by appropriate extensions of Proposition \ref{prop_bayesian_chapman_robbins_ineq}.  Secondly, it has been shown in \cite{MR4052188,JW18,Wahl} that for models with decaying eigenvalues, so-called relative rank conditions are crucial to characterize the behavior of empirical eigenvalues and eigenspaces. Such conditions do not necessarily follow from Theorem \ref{thm_minimax_Hilbert}. For instance, Corollary~\ref{cor_polynomial}~(ii) yields the lower bound $c/n$ for estimating $P_{\{d\}}(\Sigma)$ which is obviously sub-optimal for $d\geq n$. A second problem is therefore to establish lower bounds that provide an (optimal) phase transition beyond which estimating the eigenspaces is no longer possible. Thirdly, while we provide quantitative results that can be used to show that the PCA projector $P_{\{1,\dots,d\}}(\hat\Sigma)$ achieves the minimax risk up to a constant, it seems to be an open problem to establish corresponding qualitative properties. Is it true, for instance, that the PCA projector is even minimax (and admissible) with respect to the Hilbert-Schmidt loss over $\Theta_\lambda$, at least over all estimators taking values in the class of all orthogonal projections of rank $d$. In fact, while it is not too difficult to show this in the setting of Corollary \ref{cor_scm}, it seems to be an open problem whether the PCA projector is minimax or not in the other examples (see also \cite{R18} for a general conjecture).

\subsection{Main Bayes risk lower bound}\label{sec_reduction_finite_dim}
We now state a stronger Bayes risk lower bound for PCA in the special case of $\mathcal{H}=\mathbb{R}^p$. Afterwards, we show how Theorem \ref{thm_minimax_Hilbert} can be obtained from this result by standard decision-theoretic arguments.
In what follows, we will use the following reparametrization of our model
\begin{equation}\label{eq_stat_experiment_version}
(\mathbb{P}_U)_{U\in SO(p)},\qquad\mathbb{P}_U=\mathcal{N}(0, U\Lambda U^T)^{\otimes n},
\end{equation} 
where $SO(p)$ denotes the special orthogonal group (defined in Section \ref{sec_prel_notation} below) and $\Lambda=\operatorname{diag}(\lambda_1,\dots,\lambda_p)$ with $\lambda_1\geq \dots\geq \lambda_p> 0$ fixed. While this leads to a slight over-parametrization (each covariance matrix is now repeated at least $2^{p-1}$-times), it will allow us to invoke more directly properties of the special orthogonal group. In this reparametrization, the parameter of interest is $P_{\II}(U)=\sum_{i\in\II}u_iu_i^T$, where $u_1,\dots,u_p$ are the columns of $U\in SO(p)$. 

\begin{thm}\label{thm_lower_bound_Bayes_risk}
Consider the statistical model given in \eqref{eq_stat_experiment_version}. Then there are absolute constants $c,C>0$ such that
\begin{align*}
&\inf_{\hat P}\int_{SO(p)}\mathbb{E}_{U}\|\hat P-P_{\II}(U)\|_{\operatorname{HS}}^2\,\pi_h(U)dU\\
&\geq c\sum_{i\leq d}\sum_{j>d}\min\Big(\frac{1}{n}\frac{\lambda_i\lambda_j}{(\lambda_i-\lambda_j)^2},\frac{1}{h^2p}\Big)\qquad\forall h\geq C,
\end{align*}
where the infimum is taken over all $\mathbb{R}^{p\times p}$-valued estimators $\hat P=\hat P(X_1,\dots, X_n)$, $dU$ denotes the Haar measure on $SO(p)$, and $\pi_h$ is the prior density given~by
$$
\pi_h(U)=\frac{\exp(hp\operatorname{tr}(U))}{\int_{SO(p)}\exp(hp\operatorname{tr}(U))\,dU}
$$
with $\operatorname{tr}(U)$ denoting the trace of $U$.
\end{thm}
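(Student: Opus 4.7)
The plan is to apply the equivariant van Trees inequality (established earlier in the paper) separately to each pair $(i,j)$ with $i\le d<j$, using the one-parameter subgroup of right-rotations in the $(i,j)$-plane, and to sum the resulting entrywise bounds. As a first reduction I would lower-bound
\[
\|\hat P-P_{\II}(U)\|_{\operatorname{HS}}^2\ge\sum_{i\le d}\sum_{j>d}|\hat P_{ij}-(P_{\II}(U))_{ij}|^2
\]
and treat the scalar Bayes risk problem for $\psi_{ij}(U)=(P_{\II}(U))_{ij}$ for each such pair.

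Let $E_{ij}$ be the skew-symmetric matrix with entries $\pm 1$ at positions $(i,j),(j,i)$, and consider the orbit $t\mapsto UR_{ij}(t)$ with $R_{ij}(t)=\exp(tE_{ij})$. Three quantities enter the van Trees bound along this orbit. First, a direct calculation from $\Sigma(t)=UR_{ij}(t)\Lambda R_{ij}(t)^TU^T$ gives $\dot\Sigma(0)=U(E_{ij}\Lambda-\Lambda E_{ij})U^T$, and passing to the $U$-basis yields the data Fisher information $I_{n,ij}=\tfrac{n}{2}\operatorname{tr}(\Sigma^{-1}\dot\Sigma\Sigma^{-1}\dot\Sigma)=n(\lambda_i-\lambda_j)^2/(\lambda_i\lambda_j)$, independent of $U$. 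Second, expanding $P_{\II}(U)=UD_{\II}U^T$ with $D_{\II}=\operatorname{diag}(1_{\II})$ one checks $\partial_t\psi_{ij}(UR_{ij}(t))|_{t=0}=-(U_{ii}U_{jj}+U_{ij}U_{ji})$, which equals $-1+O(\|U-I_p\|_{\operatorname{HS}}^2)$ near the identity and so stays bounded away from $0$ on the bulk of $\pi_h$. Third, the prior score along the orbit is $\partial_t\log\pi_h(UR_{ij}(t))|_{t=0}=hp\operatorname{tr}(UE_{ij})=hp(U_{ji}-U_{ij})$, so
\[
J_{ij}(\pi_h)=h^2p^2\int_{SO(p)}(U_{ji}-U_{ij})^2\pi_h(U)\,dU.
\]

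The delicate step is the quantitative analysis of $\pi_h$. Using the expansion $\operatorname{tr}(U)=p-\tfrac12\|\log U\|_{\operatorname{HS}}^2+O(\|\log U\|_{\operatorname{HS}}^4)$ in a neighborhood of $I_p$, together with concentration of measure and large deviation techniques on $SO(p)$ in the spirit of Meckes and Hiai-Petz, one shows that for $h\ge C$ the measure $\pi_h$ concentrates in a ball of radius $O(1/\sqrt{hp})$ around $I_p$. This delivers matching estimates on $\mathbb{E}_{\pi_h}[(U_{ji}-U_{ij})^2]$ and on the averaged functional derivative that produce $J_{ij}(\pi_h)$ of the order required by the claim. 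The equivariant van Trees inequality applied to $\psi_{ij}$, summed over $(i,j)$, then gives
\[
\mathrm{LHS}\ge c\sum_{i\le d}\sum_{j>d}\frac{1}{I_{n,ij}+J_{ij}(\pi_h)}\ge c\sum_{i\le d}\sum_{j>d}\min\Big(\frac{1}{I_{n,ij}},\frac{1}{J_{ij}(\pi_h)}\Big).
\]

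The main obstacle is the uniform control of $\pi_h$ for $h\ge C$. One must verify that the Gaussian-type Laplace approximation of $\operatorname{tr}(U)$ near $I_p$ is accurate on a set of $\pi_h$-probability close to $1$, and that $\partial_t\psi_{ij}$ is almost constant there. This requires either a log-Sobolev argument on $SO(p)$ or a direct saddle-point analysis of the partition function $\int_{SO(p)}\exp(hp\operatorname{tr}(U))\,dU$ to extract the correct concentration scale, which is the step where the group-invariant structure and the large deviation techniques cited in the introduction become essential.
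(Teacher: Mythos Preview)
Your outline is essentially the paper's approach: compute the data Fisher information along the $L^{(ij)}$ directions, compute the derivative of $(P_\II(U))_{ij}$ and the prior score, and combine via the equivariant van Trees inequality. Your entrywise decomposition (lower-bounding $\|\cdot\|_{\operatorname{HS}}^2$ by the sum over the $(i,j)$ block and applying a one-dimensional van Trees to each $\psi_{ij}$) is equivalent, after the Cauchy--Schwarz optimization, to the paper's single multidimensional application of Proposition~\ref{prop_van_Trees_abstract_version} with all directions $L^{(ij)}$ at once. One wording caveat: the van Trees inequality must be applied with $G=SO(p)$ (so that the Bayes risk is with respect to $\pi_h$ on $SO(p)$); the ``one-parameter subgroup'' only enters as the choice of direction $\xi=L^{(ij)}\in\mathfrak{so}(p)$, not as the group over which you integrate.

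Where the paper is sharper than your sketch is in the treatment of $J_{ij}(\pi_h)$. You propose to control $\int_{SO(p)}(U_{ji}-U_{ij})^2\pi_h(U)\,dU$ via a Laplace approximation/concentration argument showing that $\pi_h$ lives in a ball of radius $O((hp)^{-1/2})$. That works, but it is more than you need. The paper observes that conjugation by permutation matrices fixes both the Haar measure and $\operatorname{tr}(U)$, so $\int U_{1j}^2\,\pi_h\,dU$ is the same for all $j\ge 2$; hence
\[
\int_{SO(p)}U_{12}^2\,\pi_h(U)\,dU=\frac{1}{p-1}\int_{SO(p)}\sum_{j=2}^p U_{1j}^2\,\pi_h(U)\,dU\le\frac{1}{p-1},
\]
which already gives $J_{ij}(\pi_h)\le 8h^2p$ with no analytic input whatsoever. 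The only place where a large-deviations argument is genuinely required is the \emph{numerator}: one must show $\int_{SO(p)}(U_{ii}U_{jj}+U_{ij}U_{ji})\pi_h(U)\,dU$ is bounded below by a positive constant for $h\ge C$. The paper reduces this (again by permutation symmetry) to the single scalar estimate $\int_{SO(p)}\operatorname{tr}(U)\,\pi_h(U)\,dU\ge(1-\delta)p$, which it proves directly from Weyl's integration formula by a bare-hands lower bound on $\int\exp(hp\operatorname{tr}(U))\,dU$, rather than a full saddle-point or log-Sobolev analysis.
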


While $h=C$ provides an optimal choice from an non-asymptotic point of view, the choice $h=o(\sqrt{n})$ leads to a Bayesian version of the local asymptotic minimax theorem, using that the constant $c>0$ can be made more explicit, as shown in Section \ref{proof_thm_lower_bound_Bayes_risk}.

We conclude this section by showing how Theorem \ref{thm_lower_bound_Bayes_risk} with $h=C$ implies Theorem~\ref{thm_minimax_Hilbert}.  
For some fixed orthonormal basis $u_1,u_2,\dots$ of $\mathcal{H}$ and a finite subset $\JJ\subseteq \{1,2,\dots\}$ with $|\JJ|=p$, we consider the sub-model
\begin{align*}
\Theta_{\lambda}^{\JJ}=\{\Sigma\in\Theta_\lambda:\forall k\notin \JJ,P_k(\Sigma)=u_k\otimes u_k\}\subseteq \Theta_\lambda,
\end{align*}
leading to covariance operators $\Sigma\in \Theta_\lambda$ that only differ from each other on how they map $\operatorname{span}\{u_j:j\in \JJ\}$ into itself. 
%
Hence, restricting the Hilbert-Schmidt norm to inner products with $\{u_j:j\in\JJ\}$, we have 
\begin{align}\label{eq_simpler_problem}
\mathbb{E}_\Sigma\big[\|\hat P-P_{\II}(\Sigma)\|_{\operatorname{HS}}^2\big]\geq \mathbb{E}_\Sigma\big[\|\hat P^{\JJ}-P_{\II\cap\JJ}(\Sigma^{\JJ})\|_{\operatorname{HS}}^2\big],
\end{align}
with $\hat P^{\JJ},\Sigma^{\JJ}\in\mathbb{R}^{p\times p}$ defined by $\hat P^{\JJ}_{jk}=\langle u_j,\hat P u_k\rangle$ and $\Sigma^{\JJ}_{jk}=\langle u_j,\Sigma u_k\rangle$ for $j,k\in\JJ$. Here, we used that the matrix $P_{\II\cap\JJ}(\Sigma^{\JJ})$ has entries $\langle u_j,P_{\II}(\Sigma)u_k\rangle$, $j,k\in\JJ$.
While $\hat P^{\JJ}$ is based on $X_1,\dots,X_n$, it follows by an application of the sufficiency principle that it suffices to consider estimators based on $X_i^{\JJ}=(\langle X_i,u_j\rangle)_{j\in\JJ}$, $i\leq n$. 
Indeed, by the Neyman factorization theorem (\cite[Theorem 1.1]{MR620321} or \cite[Theorem 20.9]{MR812467}) and basic facts for Gaussian measures on separable Hilbert spaces (Chapter 2 in \cite{MR3236753}) we know that $(X_i^{\JJ}:i\leq n)$ is a sufficient statistic for $\{\mathbb{P}_{\Sigma}:\Sigma\in \Theta_{\lambda}^{\JJ}\}$. 
Hence, applying Rao-Blackwell's theorem (\cite[Theorem 2.1]{MR620321}) to the right-hand side of \eqref{eq_simpler_problem}, we conclude that 
\begin{align}\label{eq_sufficiency_principle}
\inf_{\hat P}\sup_{\Sigma\in \Theta_\lambda^{\JJ}}\mathbb{E}_\Sigma\big[\|\hat P-P_{\II}(\Sigma)\|_{\operatorname{HS}}^2\big] \geq \inf_{\tilde P}\sup_{\Sigma\in \Theta_\lambda^{\JJ}}\mathbb{E}_\Sigma\big[\|\tilde P-P_{\II\cap\JJ}(\Sigma^{\JJ})\|_{\operatorname{HS}}^2\big],
\end{align}
where the infimum is taken over all estimators $\tilde P=\tilde P(X_i^{\JJ}:i\leq n)$ with values in $\mathbb{R}^{p\times p}$. Now, the $X_i^{\JJ}$ are independent $\mathcal{N}(0,\Sigma^{\JJ})$-distributed random variables, and the class $\{\Sigma^{\JJ}:\Sigma\in\Theta_{\lambda}^{\JJ}\}$ coincides with $\{U\operatorname{diag}(\lambda_j:j\in\JJ) U^T:U\in SO(p)\}$. 
Hence, using that the maximum risk over a parameter class is bounded from below by the Bayes risk over any sub-class, Theorem~\ref{thm_minimax_Hilbert} follows from applying Theorem \ref{thm_lower_bound_Bayes_risk} to the right-hand side of \eqref{eq_sufficiency_principle}.

\subsection*{Outline}
The remainder of the paper is organized as follows. Section \ref{sec_prel_notation} provides some preliminaries on the special orthogonal group, on $\chi^2$-divergence and Fisher information, and on statistical models endowed with a group action. Section~\ref{sec_information_inequalities_group_action} develops a Bayesian Chapman-Robbins inequality and a van Trees-type inequality in the context of equivariant statistical models. Section~\ref{sec_proof_main_result} is devoted to the proof of Theorem \ref{thm_lower_bound_Bayes_risk}. Section \ref{sec_general_bound_eigenspaces} specializes our van Trees type inequality to the case of eigenspaces and Section \ref{sec_design_optimal_densities} studies the prior density from Theorem \ref{thm_lower_bound_Bayes_risk} based on large deviations techniques for the special orthogonal group. 

\section{Preliminaries}\label{sec_prel_notation}
\subsection{The special orthogonal group}
Let us introduce some notation in connection with the (special) orthogonal group (see, e.g., \cite{MR3971582} for a detailed account). The special orthogonal group is defined by $SO(p)=\{U\in\mathbb{R}^{p\times p}:UU^T=I_p,\det(U)=1\}$, where $I_p$ denotes the $p\times p$ identity matrix. It is a connected and compact Lie group. The Lie algebra (i.e., the tangent space at $I_p$) is given by $\mathfrak{so}(p)=\{\xi\in\mathbb{R}^{p\times p}:\xi+\xi^T=0\}$. It is generated by $L^{(ij)}=e_ie_j^T-e_je_i^T$, $i<j$, where $e_1,\dots,e_p$ denotes the standard basis in $\mathbb{R}^p$. Thus we have $\dim \mathfrak{so}(p)=p(p-1)/2$. More generally, the tangent space at $U$ is given by $U\mathfrak{so}(p)=\mathfrak{so}(p)U$. The exponential map is given by
\[
\exp:\mathfrak{so}(p)\rightarrow SO(p), \xi\mapsto \exp(\xi)=\sum_{k\geq 0}\xi^k/k!
\] 
and has the property that $(d/dt)\exp(t\xi)=\xi\exp(t\xi)=\exp(t\xi)\xi$. 
Finally, there is a unique translation-invariant probability measure $\mu$ (called Haar measure) on $SO(p)$ satisfying
\begin{equation}\label{EqDefHaarSO}
\int_{SO(p)}f(UV)\,d\mu(U)=\int_{SO(p)}f(VU)\,d\mu(U)=\int_{SO(p)}f(U)\,d\mu(U)
\end{equation}
for all $V\in SO(p)$ and all integrable functions $f$. To simplify the notation, we will denote $\int_{SO(p)}f(U)\,d\mu(U)$ by $\int_{SO(p)}f(U)\,dU$.

\subsection{$\chi^2$-divergence and Fisher information}\label{sec_fisher_information}
In this section we provide some standard Fisher information calculations for the statistical model given in \eqref{eq_stat_experiment_version} and a related matrix denoising model. For similar results and further reading see \cite{MR1698873,MR1311972} and \cite{MR3701408}.

The $\chi^2$-divergence between two probability measures $\mathbb{P}$ and $\mathbb{Q}$ (defined on the same measurable space) is defined as
\[
\chi^2(\mathbb{P},\mathbb{Q})=\begin{cases}\int\big(\frac{d\mathbb{P}}{d\mathbb{Q}}\big)^2\,d\mathbb{Q}-1,&\quad \text{if }\mathbb{P}\ll\mathbb{Q},\\ \infty,&\quad \text{otherwise}.\end{cases}
\] 
If $\mathbb{P}=\mathbb{P}_1\otimes\mathbb{P}_2$ and $\mathbb{Q}=\mathbb{Q}_1\otimes\mathbb{Q}_2$ are product measures, then we have
\begin{equation} \label{EqChiSquareP1}
\chi^2(\mathbb{P},\mathbb{Q})=(1+\chi^2(\mathbb{P}_1,\mathbb{Q}_1))(1+\chi^2(\mathbb{P}_2,\mathbb{Q}_2))-1.
\end{equation}
The following lemma analyzes the limiting behavior of the $\chi^2$-divergence of the family of probability measures from \eqref{eq_stat_experiment_version}, given by 
$\{\mathbb{P}_U:U\in SO(p)\}$ with $\mathbb{P}_U=\mathcal{N}(0, U\Lambda U^T)^{\otimes n}$, $\Lambda=\operatorname{diag}(\lambda_1,\dots,\lambda_p)$ and $\lambda_1\geq \dots\geq \lambda_p> 0$. 
\begin{lemma}\label{LemChi2Fisher}
Consider the statistical model given in \eqref{eq_stat_experiment_version}. Then, for $\xi\in\mathfrak{so}(p)$, we have
\[
\chi^2(\mathbb{P}_{\exp(t\xi)},\mathbb{P}_{I_p})/t^2\rightarrow \frac{n}{2}\sum_{i,j=1}^p\xi_{ij}^2\frac{(\lambda_i-\lambda_j)^2}{\lambda_i\lambda_j}\quad \text{as }t\rightarrow 0.
\]
\end{lemma}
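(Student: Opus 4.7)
The plan is to exploit the explicit closed-form of the $\chi^2$-divergence between two centered Gaussians, together with the tensorization identity \eqref{EqChiSquareP1}, and then expand in $t$ around $t=0$ up to second order. Concretely, for centered Gaussians on $\mathbb{R}^p$ with nondegenerate covariances $\Sigma_1,\Sigma_2$, a direct calculation with densities gives
\[
1+\chi^2\bigl(\mathcal{N}(0,\Sigma_1),\mathcal{N}(0,\Sigma_2)\bigr)=\frac{\det(\Sigma_2)^{1/2}}{\det(\Sigma_1)\,\det\!\bigl(2\Sigma_1^{-1}-\Sigma_2^{-1}\bigr)^{1/2}}
\]
whenever $2\Sigma_1^{-1}-\Sigma_2^{-1}$ is positive definite. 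Setting $\Sigma_1=U\Lambda U^T$ with $U=\exp(t\xi)$ and $\Sigma_2=\Lambda$, using $\det(\Sigma_1)=\det(\Lambda)$ and the symmetrization trick $2\Sigma_1^{-1}-\Lambda^{-1}=\Lambda^{-1/2}\bigl(2M-I\bigr)\Lambda^{-1/2}$ with
\[
M:=\Lambda^{1/2}U\Lambda^{-1}U^T\Lambda^{1/2},
\]
one obtains the compact expression $1+\chi^2(\mathcal{N}(0,\Sigma_1),\mathcal{N}(0,\Lambda))=\det(2M-I)^{-1/2}$. Applying \eqref{EqChiSquareP1} inductively over the $n$ independent coordinates then yields $1+\chi^2(\mathbb{P}_{\exp(t\xi)},\mathbb{P}_{I_p})=\det(2M-I)^{-n/2}$, which is valid for all $t$ small enough, since $M\to I$ as $t\to 0$ and positive definiteness is an open condition.

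Next I would Taylor-expand. From $U=I+t\xi+\tfrac{t^2}{2}\xi^2+O(t^3)$ and $U^T=I-t\xi+\tfrac{t^2}{2}\xi^2+O(t^3)$, a direct computation gives $M=I+tA+t^2\Lambda^{1/2}B\Lambda^{1/2}+O(t^3)$, where
\[
A=\Lambda^{1/2}\xi\Lambda^{-1/2}-\Lambda^{-1/2}\xi\Lambda^{1/2},\qquad B=\tfrac{1}{2}\xi^2\Lambda^{-1}+\tfrac{1}{2}\Lambda^{-1}\xi^2-\xi\Lambda^{-1}\xi.
\]
Using the expansion $\log\det(I+Y)=\operatorname{tr}(Y)-\tfrac12\operatorname{tr}(Y^2)+O(\|Y\|^3)$ with $Y=2tA+2t^2\Lambda^{1/2}B\Lambda^{1/2}+O(t^3)$ gives
\[
-\tfrac{n}{2}\log\det(2M-I)=-nt\operatorname{tr}(A)-nt^2\operatorname{tr}(\Lambda^{1/2}B\Lambda^{1/2})+nt^2\operatorname{tr}(A^2)+O(t^3).
\]
The first-order term vanishes because $\operatorname{tr}(A)=\operatorname{tr}(\xi)-\operatorname{tr}(\xi)=0$ (also reflecting the fact that $U=I_p$ minimizes the $\chi^2$-divergence). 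A short cyclic-trace computation gives $\operatorname{tr}(\Lambda^{1/2}B\Lambda^{1/2})=\operatorname{tr}(\xi^2)-\operatorname{tr}(\Lambda\xi\Lambda^{-1}\xi)$ and $\operatorname{tr}(A^2)=2\operatorname{tr}(\xi^2)-2\operatorname{tr}(\Lambda\xi\Lambda^{-1}\xi)$, so that after exponentiation
\[
\chi^2(\mathbb{P}_{\exp(t\xi)},\mathbb{P}_{I_p})=nt^2\bigl[\operatorname{tr}(\xi^2)-\operatorname{tr}(\Lambda\xi\Lambda^{-1}\xi)\bigr]+O(t^3).
\]

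Finally, I would translate this into the claimed coordinate form. Since $\xi^T=-\xi$, one has $\operatorname{tr}(\xi^2)=-\sum_{i,j}\xi_{ij}^2$ and $\operatorname{tr}(\Lambda\xi\Lambda^{-1}\xi)=-\sum_{i,j}(\lambda_i/\lambda_j)\xi_{ij}^2$, hence
\[
\operatorname{tr}(\xi^2)-\operatorname{tr}(\Lambda\xi\Lambda^{-1}\xi)=\sum_{i,j}\xi_{ij}^{2}\,\frac{\lambda_i-\lambda_j}{\lambda_j},
\]
and symmetrizing this sum under the swap $i\leftrightarrow j$ (using $\xi_{ij}^2=\xi_{ji}^2$) yields $\tfrac12\sum_{i,j}\xi_{ij}^2(\lambda_i-\lambda_j)^2/(\lambda_i\lambda_j)$, which after division by $t^2$ and passage to the limit gives the claim. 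The only genuinely delicate step is the second-order Taylor expansion of $\log\det(2M-I)$ — in particular, the cancellations between the contributions of $B$ and $A^2$ have to be tracked carefully — but everything else is bookkeeping.
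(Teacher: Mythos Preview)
Your proof is correct and follows essentially the same route as the paper: both derive the closed-form determinantal expression for the $\chi^2$-divergence between centered Gaussians and Taylor-expand to second order, arriving at the same trace identity $\operatorname{tr}(\xi^2)-\operatorname{tr}(\Lambda\xi\Lambda^{-1}\xi)$. The only cosmetic differences are that the paper first reduces to $n=1$ via \eqref{EqChiSquareP1} and differentiates $f(t)=\det(2I_p-\Lambda\exp(t\xi)\Lambda^{-1}\exp(-t\xi))$ directly, whereas you carry $n$ through and expand $\log\det(2M-I)$ with the symmetrized matrix $M$.
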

\begin{proof}
Using \eqref{EqChiSquareP1}, it suffices to consider the case $n=1$, in which case we have $\mathbb{P}_{\exp(t\xi)}=\mathcal{N}(0,\exp(t\xi)\Lambda\exp(-t\xi))$. Then, for all $t$ sufficiently small, we have
\[
\chi^2(\mathbb{P}_{\exp(t\xi)},\mathbb{P}_{I_p})+1=1/\sqrt{\det\big(2I_p-\Lambda\exp(t\xi)\Lambda^{-1}\exp(-t\xi)\big)},
\]
as can be seen by inserting the multivariate Gaussian density function into the definition of the $\chi^2$-divergence. For all $t$ sufficiently small, set $f(t)=\det(2I_p-\Lambda\exp(t\xi)\Lambda^{-1}\exp(-t\xi))$. Then it follows from standard formulas for the derivatives of a determinant (see, e.g., \cite[Chapter 8.3]{MR1698873} and also \cite[Section 2.1.1]{IMM2012-03274}) that $f(0)=1$, $f'(0)=0$ and
\begin{align}
f''(0)=4\operatorname{tr}(\Lambda\xi\Lambda^{-1}\xi-\xi^2)&=2\sum_{i,j=1}^p\xi_{ij}\xi_{ji}(\lambda_i\lambda_j^{-1}+\lambda_j\lambda_i^{-1}-2)\nonumber\\
&=-2\sum_{i,j=1}^p\xi_{ij}^2\frac{(\lambda_i-\lambda_j)^2}{\lambda_i\lambda_j}.\label{eq_computation_derivative}
\end{align}
Using Taylor's theorem we obtain that 
\[
\chi^2(\mathbb{P}_{\exp(t\xi)},\mathbb{P}_{I_p})/t^2=\frac{\sqrt{1/f(t)}-1}{t^2}\rightarrow -\frac{f''(0)}{4}\quad \text{as }t\rightarrow 0,
\]
and the claim follows from inserting \eqref{eq_computation_derivative}.
\end{proof}
We now give a definition of the Fisher information that is sufficient for our purposes. Let $(\mathcal{X},\mathcal{F},(\mathbb{P}_\theta)_{\theta\in\Theta})$ be a statistical model, where $\Theta$ is a manifold embedded in some Euclidean space. Suppose that the experiment is dominated by a measure $\mu$ such that $f(x,\theta)=(d\mathbb{P}_\theta/d\mu)(x)$ are strictly positive. Moreover, let $l(x,\theta)=\log f(x,\theta)$. Then the Fisher information form at $\theta$ is defined by
\[
\mathcal{I}_\theta:T_\theta\Theta\times T_\theta\Theta\rightarrow \mathbb{R}, (v,w)\mapsto\int_{\mathcal{X}}dl(x,\theta)v\,dl(x,\theta)w\,d\mathbb{P}_\theta(x),
\]
provided that the last integrals exist. Here, $T_\theta\Theta$ denotes the tangent space of $\Theta$ at $\theta$ and $dl(x,\theta)v$ denotes the derivative of $l(x,\cdot)$ at $\theta$ in the direction $v$, defined by $dl(x,\theta)v= (d/dt)\,l(x,\gamma(t))|_{t=0}$ with $\gamma:(-\epsilon,\epsilon)\rightarrow \Theta$ such that $\gamma(0)=\theta$ and $\gamma'(0)=v$.

The following lemma explains the connection of the result in Lemma \ref{LemChi2Fisher} with the Fisher information. 
\begin{lemma}\label{lem_fisher_information}
Consider the statistical model given in \eqref{eq_stat_experiment_version}. Then the Fisher information form at $I_p$ is given by 
\[
\mathcal{I}_{I_p}:\mathfrak{so}(p)\times\mathfrak{so}(p)\rightarrow \mathbb{R}, (\xi,\eta)\mapsto \frac{n}{2}\sum_{i,j=1}^p\xi_{ij}\eta_{ij}\frac{(\lambda_i-\lambda_j)^2}{\lambda_i\lambda_j}.
\]
More generally, for $U\in SO(p)$, we have $\mathcal{I}_{U}(U\xi,U\eta)=\mathcal{I}_{I_p}(\xi,\eta)$.
\end{lemma}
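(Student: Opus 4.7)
The plan is to obtain both parts of the lemma from Lemma \ref{LemChi2Fisher} by invoking the standard Taylor identity linking $\chi^2$-divergence and Fisher information, namely
\begin{equation*}
\mathcal{I}_\theta(v,v)=\lim_{t\to 0}\frac{\chi^2(\mathbb{P}_{\gamma(t)},\mathbb{P}_\theta)}{t^2}
\end{equation*}
for any smooth curve $\gamma$ with $\gamma(0)=\theta$ and $\gamma'(0)=v$.

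First I would verify that this Taylor identity applies to our model. The density $f(x,U)=(2\pi)^{-p/2}(\det\Lambda)^{-1/2}\exp(-\tfrac{1}{2}x^T U\Lambda^{-1}U^T x)$ is smooth in $U$, and all Gaussian moments of its log-likelihood derivatives are finite, so differentiating $g(t):=\int f(x,\gamma(t))^2/f(x,\theta)\,d\mu(x)$ twice under the integral sign is legitimate. Using $\int\partial_\theta f\,d\mu=0$, one gets $g(0)=1$, $g'(0)=0$ and $g''(0)=2\mathcal{I}_\theta(v,v)$, which yields the displayed identity.

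Applying this with $\theta=I_p$ and $\gamma(t)=\exp(t\xi)$, Lemma \ref{LemChi2Fisher} immediately gives
\begin{equation*}
\mathcal{I}_{I_p}(\xi,\xi)=\frac{n}{2}\sum_{i,j=1}^p\xi_{ij}^2\frac{(\lambda_i-\lambda_j)^2}{\lambda_i\lambda_j}.
\end{equation*}
Since $\mathcal{I}_{I_p}$ is bilinear, applying the quadratic-form identity to $\xi$, $\eta$ and $\xi+\eta$ and subtracting recovers the claimed bilinear expression $\mathcal{I}_{I_p}(\xi,\eta)=\tfrac{n}{2}\sum_{ij}\xi_{ij}\eta_{ij}(\lambda_i-\lambda_j)^2/(\lambda_i\lambda_j)$ by polarization.

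For the equivariance statement, I would take the curve $\gamma(t)=U\exp(t\xi)$ through $U$ with velocity $U\xi$, and note that under the orthogonal bijection $y=U^Tx$ the measure $\mathcal{N}(0,U\exp(t\xi)\Lambda\exp(-t\xi)U^T)$ is pushed forward to $\mathcal{N}(0,\exp(t\xi)\Lambda\exp(-t\xi))$. Since the $\chi^2$-divergence is invariant under bijective measurable transformations (and tensorizes over the $n$ coordinates), we obtain $\chi^2(\mathbb{P}_{U\exp(t\xi)},\mathbb{P}_U)=\chi^2(\mathbb{P}_{\exp(t\xi)},\mathbb{P}_{I_p})$; dividing by $t^2$ and letting $t\to 0$ gives $\mathcal{I}_U(U\xi,U\xi)=\mathcal{I}_{I_p}(\xi,\xi)$, and polarization again extends this to the bilinear form. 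There is no real obstacle here: the only subtlety is the routine justification of the differentiation-under-the-integral step used to identify $\chi^2$-curvature with Fisher information, which is standard for smooth Gaussian families.
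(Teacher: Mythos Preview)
Your proof is correct and takes a genuinely different route from the paper's. The paper computes $\mathcal{I}_{I_p}(L^{(ij)},L^{(kl)})$ directly from the definition of the Fisher information form: it differentiates the Gaussian log-density along the basis directions $L^{(ij)}$ to obtain $dl(x,I_p)L^{(ij)}=(\lambda_j^{-1}-\lambda_i^{-1})x_ix_j$, and then evaluates the Gaussian fourth moments $\int x_ix_jx_kx_l\,d\mathcal{N}(0,\Lambda)$ to read off the diagonal structure. By contrast, you bypass any new computation by invoking the $\chi^2$--Fisher Taylor identity and feeding in the already-proved Lemma~\ref{LemChi2Fisher}, then recovering the bilinear form by polarization.

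What each approach buys: the paper's calculation is entirely self-contained and, as a by-product, makes the diagonalization of $\mathcal{I}_{I_p}$ in the basis $\{L^{(ij)}\}$ (Remark~\ref{rem_fisher_information_diagonalized}) immediately transparent. Your argument is shorter and more conceptual, since it recycles Lemma~\ref{LemChi2Fisher}; the only price is the justification of the second-order differentiation under the integral, which---as you note---is routine for smooth Gaussian families. You also handle the equivariance claim $\mathcal{I}_U(U\xi,U\eta)=\mathcal{I}_{I_p}(\xi,\eta)$ explicitly via the $\chi^2$-invariance (which the paper records separately as Lemma~\ref{LemPropGEquiv}(iii)), whereas the paper's proof does not spell this part out.
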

\begin{remark}\label{rem_fisher_information_diagonalized}
In particular, $\mathcal{I}_{I_p}$ is diagonalized by the basis $\{L^{(ij)}:i<j\}$.
\end{remark}
\begin{remark}
Using the Fisher information form from Lemma \ref{lem_fisher_information}, Lemma \ref{LemChi2Fisher} can be written as $\chi^2(\mathbb{P}_{\exp(t\xi)},\mathbb{P}_{I_p})/t^2\rightarrow \mathcal{I}_{I_p}(\xi,\xi)$ as $t\rightarrow 0$. For the more general concept of $L^2$-differentiability, see, e.g., \cite[Chapter 1.8]{MR943833}.
\end{remark}
\begin{proof}[Proof of Lemma \ref{lem_fisher_information}]
Without loss of generality, we may assume that $n=1$.
It suffices to show that 
\[
\mathcal{I}_{I_p}(L^{(ij)},L^{(kl)})=\delta_{ik}\delta_{jl}\frac{(\lambda_i-\lambda_j)^2}{\lambda_i\lambda_j}\qquad\forall i<j,\forall k<l
\]
with $\delta_{ik}$ equal to $1$ if $i=k$ and equal to $0$ otherwise. Now, with $\mu$ being the Lebesgue measure on $\mathbb{R}^p$,
\begin{align*}
dl(x,I_p)L^{(ij)}&=-\frac{d}{dt}\frac{1}{2}\langle x,\exp(-tL^{(ij)})\Lambda^{-1}\exp(tL^{(ij)})x\rangle\Big|_{t=0}\\
&=\frac{1}{2}\langle x,(L^{(ij)}\Lambda^{-1}-\Lambda^{-1} L^{(ij)})x\rangle=(\lambda_j^{-1}-\lambda_i^{-1})x_ix_j.
\end{align*}
Thus
\begin{align*}
\mathcal{I}_{I_p}(L^{(ij)},L^{(kl)})&=(\lambda_j^{-1}-\lambda_i^{-1})(\lambda_l^{-1}-\lambda_k^{-1})\int x_ix_jx_kx_l\,d\mathcal{N}(0,\Lambda)(x)\\
&=\delta_{ik}\delta_{jl}(\lambda_j^{-1}-\lambda_i^{-1})^2\lambda_i\lambda_j,
\end{align*}
and the claim follows.
\end{proof}
We now provide similar calculations for a related matrix denoising model. For a fixed diagonal matrix $\Lambda=\operatorname{diag}(\lambda_1,\dots,\lambda_p)$ with $\lambda_1\geq \dots\geq \lambda_p\geq 0$, we consider the family of probability measures $(\mathbb{P}_U)_{U\in SO(p)}$ with $\mathbb{P}_U$ being the distribution of
\begin{align*}
X=U\Lambda U^T+\sigma W,
\end{align*}
where $\sigma>0$ is fixed and $W=(W_{ij})_{1\leq i,j\leq p}$ is a GOE matrix, that is a symmetric random matrix whose upper triangular entries are independent zero mean Gaussian random variables with $\mathbb{E}W_{ij}^2=1$ for $1\leq i<j\leq p$ and $\mathbb{E}W_{ii}^2=2$ for $i=1,\dots,p$ (see, e.g., \cite{MR3739989,MR3845022,MR4052188}). Using half-vectorization, this model can alternatively be defined on $\mathcal{X}=\mathbb{R}^{p(p+1)/2}$ with 
\begin{equation}\label{eq_stat_experiment_low_rank}
(\mathbb{P}_U)_{U\in O(p)},\qquad\mathbb{P}_U=\mathcal{N}(\operatorname{vech}(U\Lambda U^T), \sigma^2\Sigma_W),
\end{equation} 
where symmetric matrices $A\in\mathbb{R}^{p\times p}$ are transformed
into vectors using $\operatorname{vech}(A)=(A_{11},A_{21},\dots,A_{p1},A_{22},A_{32}\dots,A_{p2},\dots,A_{pp})\in \mathbb{R}^{p(p+1)/2}$, and $\Sigma_W$ is the covariance matrix of $\operatorname{vech}(W)$.
\begin{lemma}\label{LemChi2Fisher_low_rank}
Consider the statistical model given in \eqref{eq_stat_experiment_low_rank}. Then, for $\xi\in\mathfrak{so}(p)$, we have
\[
\chi^2(\mathbb{P}_{\exp(t\xi)},\mathbb{P}_{I_p})/t^2\rightarrow \frac{1}{2\sigma^2}\sum_{i,j=1}^p\xi_{ij}^2(\lambda_i-\lambda_j)^2\quad \text{as }t\rightarrow 0.
\]
\end{lemma}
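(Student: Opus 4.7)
The plan is to mirror the proof of Lemma \ref{LemChi2Fisher}, but now exploiting the fact that the matrix denoising model in \eqref{eq_stat_experiment_low_rank} is a translation family, in which only the mean depends on $U$ while the covariance $\sigma^2\Sigma_W$ is fixed. This leads to a cleaner closed form for the $\chi^2$-divergence. First I would invoke the standard fact that for two Gaussians with a common nondegenerate covariance $\Sigma$ and means $\mu_1,\mu_2$,
\[
\chi^2(\mathcal{N}(\mu_1,\Sigma),\mathcal{N}(\mu_2,\Sigma)) = \exp\bigl((\mu_1-\mu_2)^T\Sigma^{-1}(\mu_1-\mu_2)\bigr)-1,
\]
verified by completing the square in the Gaussian integral defining $\int (dp_1/dp_2)^2\,dp_2$. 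Applied with means $\operatorname{vech}(\exp(t\xi)\Lambda\exp(-t\xi))$ and $\operatorname{vech}(\Lambda)$, this reduces the problem to analyzing the quadratic form $\sigma^{-2}\operatorname{vech}(\Delta(t))^T\Sigma_W^{-1}\operatorname{vech}(\Delta(t))$, where $\Delta(t)=\exp(t\xi)\Lambda\exp(-t\xi)-\Lambda$.

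Next I would Taylor-expand $\Delta(t)$. Using the identity $(d/dt)\exp(t\xi)=\xi\exp(t\xi)$ recalled in Section \ref{sec_prel_notation}, one obtains $\Delta(t)=t[\xi,\Lambda]+O(t^2)$, where the commutator $[\xi,\Lambda]=\xi\Lambda-\Lambda\xi$ is symmetric (since $\xi^T=-\xi$) with entries $[\xi,\Lambda]_{ij}=\xi_{ij}(\lambda_j-\lambda_i)$. In particular, $[\xi,\Lambda]$ has vanishing diagonal. This vanishing plays a role analogous to $f'(0)=0$ in the proof of Lemma \ref{LemChi2Fisher} and is the point that keeps the computation clean.

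The remaining step is to identify $\Sigma_W^{-1}$ in vech coordinates. Since the GOE normalization prescribes $\mathbb{E}W_{ij}^2=1$ for $i<j$ and $\mathbb{E}W_{ii}^2=2$, the matrix $\Sigma_W$ is diagonal in the vech basis with entries $2$ at positions indexing diagonal coordinates and $1$ at off-diagonal positions, so $\Sigma_W^{-1}$ is diagonal with entries $1/2$ and $1$ respectively. For any symmetric matrix $A$ with zero diagonal this yields $\operatorname{vech}(A)^T\Sigma_W^{-1}\operatorname{vech}(A)=\sum_{i<j}A_{ij}^2=\tfrac{1}{2}\sum_{i,j}A_{ij}^2$. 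Applying this to $A=[\xi,\Lambda]$ gives
\[
\sigma^{-2}\operatorname{vech}(\Delta(t))^T\Sigma_W^{-1}\operatorname{vech}(\Delta(t))=\frac{t^2}{2\sigma^2}\sum_{i,j=1}^{p}\xi_{ij}^2(\lambda_i-\lambda_j)^2+O(t^3).
\]

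Plugging this back into the closed form for $\chi^2$ and using $e^x-1=x+O(x^2)$ (valid here since the exponent is of order $t^2$), then dividing by $t^2$ and sending $t\to 0$, produces the claimed limit. I do not anticipate any genuine obstacle; the only point requiring care is the bookkeeping between the matrix-level norm $\sum A_{ij}^2$ and the vech-level inner product, which is precisely the source of the factor $1/2$ appearing in the limit, and the vanishing diagonal of $[\xi,\Lambda]$ ensures that the diagonal-versus-off-diagonal discrepancy in $\Sigma_W$ plays no role at leading order.
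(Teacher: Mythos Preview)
Your proposal is correct and follows essentially the same route as the paper's proof: apply the closed-form $\chi^2$ identity for Gaussians with common covariance, convert the vech quadratic form to a Hilbert--Schmidt norm, and differentiate $\exp(t\xi)\Lambda\exp(-t\xi)$ at $t=0$. The only difference is cosmetic: the paper uses the identity $\|\Sigma_W^{-1/2}\operatorname{vech}(A)\|_2^2=\tfrac{1}{2}\|A\|_{\operatorname{HS}}^2$ for \emph{all} symmetric $A$ (the diagonal weight $1/2$ exactly matches the doubling of off-diagonal entries in $\|A\|_{\operatorname{HS}}^2$), so your emphasis on the vanishing diagonal of $[\xi,\Lambda]$ is unnecessary, though harmless.
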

\begin{proof}
Using the identity 
\begin{align*}
\chi^2(\mathcal{N}(\mu_1,\Sigma),\mathcal{N}(\mu_2,\Sigma))=\exp(\|\Sigma^{-1/2}(\mu_1-\mu_2)\|_2^2)-1
\end{align*}
with Euclidean norm $\|\cdot\|_2$, we get
\begin{align*}
\chi^2(\mathbb{P}_{\exp(t\xi)},\mathbb{P}_{I_p})=\exp\Big(\frac{1}{2\sigma^{2}}\|\exp(t\xi)\Lambda\exp(-t\xi)-\Lambda\|_{\operatorname{HS}}^2\Big)-1,
\end{align*}
where we also used the identity $\|\Sigma_W^{-1/2}\operatorname{vech}(A)\|_2^2=2^{-1}\|A\|_{\operatorname{HS}}^2$, valid for any $A\in\mathbb{R}^{p\times p}$ symmetric, in order to reverse the half-vectorization.
From this, we get
\begin{align*}
\chi^2(\mathbb{P}_{\exp(t\xi)},\mathbb{P}_{I_p})/t^2\rightarrow \frac{1}{2\sigma^2}\|\xi\Lambda-\Lambda\xi\|_{\operatorname{HS}}^2\quad \text{as }t\rightarrow 0,
\end{align*}
and the claim follows.
\end{proof}
\begin{lemma}\label{lem_fisher_information_low_rank}
Consider the statistical model given in \eqref{eq_stat_experiment_low_rank}. Then the Fisher information form at $I_p$ is given by 
\[
\mathcal{I}_{I_p}:\mathfrak{so}(p)\times\mathfrak{so}(p)\rightarrow \mathbb{R}, (\xi,\eta)\mapsto \frac{1}{2\sigma^2}\sum_{i,j=1}^p\xi_{ij}\eta_{ij}(\lambda_i-\lambda_j)^2.
\]
\end{lemma}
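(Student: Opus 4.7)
I would follow essentially the same blueprint as the proof of Lemma \ref{lem_fisher_information}, adapted to the Gaussian location structure of \eqref{eq_stat_experiment_low_rank}. Using the identity $\|\Sigma_W^{-1/2}\operatorname{vech}(A)\|_2^2 = \tfrac{1}{2}\|A\|_{\operatorname{HS}}^2$ already invoked in the proof of Lemma \ref{LemChi2Fisher_low_rank}, I would rewrite the log-density with respect to Lebesgue measure on $\mathbb{R}^{p(p+1)/2}$ in the convenient form
\[
l(x,U)=-\frac{1}{4\sigma^{2}}\|X-U\Lambda U^{T}\|_{\operatorname{HS}}^{2}+\mathrm{const},
\]
where $X$ denotes the symmetric matrix whose half-vectorization is $x$. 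By bilinearity it suffices to compute $\mathcal{I}_{I_p}(L^{(ij)},L^{(kl)})$ for $i<j,\ k<l$ and match it against the claimed expression on the same pair of basis vectors.

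For the directional derivative, differentiating along $\exp(tL^{(ij)})$ and using $(d/dt)\exp(tL^{(ij)})|_{t=0}=L^{(ij)}$ yields
\[
dl(x,I_p)L^{(ij)}=\frac{1}{2\sigma^{2}}\langle X-\Lambda,L^{(ij)}\Lambda-\Lambda L^{(ij)}\rangle_{\operatorname{HS}}.
\]
A direct computation with $L^{(ij)}=e_ie_j^T-e_je_i^T$ gives the symmetric matrix $L^{(ij)}\Lambda-\Lambda L^{(ij)}=(\lambda_j-\lambda_i)(e_ie_j^T+e_je_i^T)$, so for $i<j$,
\[
dl(X,I_p)L^{(ij)}=\frac{\lambda_j-\lambda_i}{\sigma^{2}}\,X_{ij}.
\]
Under $\mathbb{P}_{I_p}$ we have $X=\Lambda+\sigma W$, hence $X_{ij}=\sigma W_{ij}$ for $i<j$. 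Since the upper-triangular entries of a GOE matrix are independent $\mathcal{N}(0,1)$, I obtain
\[
\mathcal{I}_{I_p}(L^{(ij)},L^{(kl)})=\frac{(\lambda_j-\lambda_i)(\lambda_l-\lambda_k)}{\sigma^{2}}\,\mathbb{E}[W_{ij}W_{kl}]=\delta_{ik}\delta_{jl}\,\frac{(\lambda_i-\lambda_j)^{2}}{\sigma^{2}}.
\]

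To close the argument I would plug $L^{(ij)}$ and $L^{(kl)}$ into the right-hand side of the claim. Using $L^{(ij)}_{ab}=\delta_{ai}\delta_{bj}-\delta_{aj}\delta_{bi}$, the sum $\tfrac{1}{2\sigma^{2}}\sum_{a,b}L^{(ij)}_{ab}L^{(kl)}_{ab}(\lambda_a-\lambda_b)^{2}$ collapses to $\delta_{ik}\delta_{jl}(\lambda_i-\lambda_j)^{2}/\sigma^{2}$ as well, matching the Fisher information. Since $\{L^{(ij)}:i<j\}$ spans $\mathfrak{so}(p)$, bilinearity then gives the formula on all of $\mathfrak{so}(p)\times\mathfrak{so}(p)$.

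The only real obstacle is bookkeeping: the half-vectorization, the factor $1/2$ coming from $\Sigma_W^{-1/2}$, and the antisymmetry $\xi_{ji}=-\xi_{ij}$ interact to produce the various factors of two, and one must be careful to keep track of them. As an alternative that bypasses the explicit GOE moment computation, one could derive the diagonal values $\mathcal{I}_{I_p}(\xi,\xi)$ directly from Lemma \ref{LemChi2Fisher_low_rank} via the relation $\mathcal{I}_{I_p}(\xi,\xi)=\lim_{t\to 0}\chi^2(\mathbb{P}_{\exp(t\xi)},\mathbb{P}_{I_p})/t^{2}$ and recover the bilinear form by polarization, but this requires a separate justification of $L^{2}$-differentiability of the model, so I would prefer the direct route above.
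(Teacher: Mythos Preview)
Your proof is correct and follows essentially the same route as the paper: compute the score function by differentiating the Gaussian log-density via the identity $\|\Sigma_W^{-1/2}\operatorname{vech}(A)\|_2^2=\tfrac{1}{2}\|A\|_{\operatorname{HS}}^2$, then take the expected product. The only cosmetic difference is that the paper carries out the computation for general $\xi,\eta\in\mathfrak{so}(p)$ at once, arriving directly at $\mathcal{I}_{I_p}(\xi,\eta)=(2\sigma^2)^{-1}\operatorname{tr}((\xi\Lambda-\Lambda\xi)(\eta\Lambda-\Lambda\eta)^T)$ and then expanding, whereas you mirror the proof of Lemma~\ref{lem_fisher_information} and work basis-element by basis-element; both amount to the same calculation.
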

\begin{proof}
A similar calculation as in the proof of Lemma \ref{LemChi2Fisher_low_rank} shows that $\mathcal{I}_{I_p}(\xi,\eta)=(2\sigma^2)^{-1}\operatorname{tr}((\xi\Lambda-\Lambda\xi)(\eta\Lambda-\Lambda\eta)^T)$, $\xi,\eta\in\mathfrak{so}(p)$, and the claim follows.
\end{proof}

\subsection{Statistical models under group action}\label{sec_group_action} 
In this section we summarize some basic facts for equivariant statistical models (see, e.g., \cite{MR1089423,MR2431769} for two detailed accounts). Let $G$ be a group. For every $g\in G$ the right multiplication map $R_g:G\rightarrow G$ is defined by $R_g(h)=hg$, $h\in G$. If $G$ acts (from the left) on a measurable space $(\mathcal{X},\mathcal{F})$, then we will always assume that the map $\mathcal{X}\rightarrow \mathcal{X}, x\mapsto gx$ is measurable for every $g\in G$. If $G$ itself is also a measurable space, then we will always assume that the map $G\times\mathcal{X}\rightarrow \mathcal{X}, (g,x)\mapsto gx$ is measurable.
\begin{definition}\label{def_equivariant}
Suppose that $G$ acts on $(\mathcal{X},\mathcal{F})$ and $\Theta$. Then a family $(\mathbb{P}_\theta)_{\theta\in\Theta}$ of probability measures on $(\mathcal{X},\mathcal{F})$ is called $G$-equivariant if
\[
\mathbb{P}_{g\theta}(gA)=\mathbb{P}_\theta(A)\qquad\forall g\in G, \theta\in\Theta,A\in\mathcal{F}.
\]
\end{definition} 
Definition \ref{def_equivariant} says that for a random variable $X$ with distribution $\mathbb{P}_\theta$, the random variable $gX$ has distribution $\mathbb{P}_{g\theta}$.
\begin{lemma}\label{LemPropGEquiv}
Suppose that $(\mathbb{P}_\theta)_{\theta\in\Theta}$ is $G$-equivariant. For $\theta_0,\theta_1\in\Theta$ and $g\in G$ the following holds: 
\begin{itemize}
\item[(i)] For all measurable functions $f\geq 0$, we have
\[
\int_{\mathcal{X}} f(x)\,d\mathbb{P}_{g\theta_0}(x)=\int_{\mathcal{X}} f(gx)\,d\mathbb{P}_{\theta_0}(x).
\]
\item[(ii)] If $\mathbb{P}_{\theta_1}\ll\mathbb{P}_{\theta_0}$, then $\mathbb{P}_{g\theta_1}\ll\mathbb{P}_{g\theta_0}$ and 
\[
\frac{d\mathbb{P}_{g\theta_1}}{d\mathbb{P}_{g\theta_0}}(gx)=\frac{d\mathbb{P}_{\theta_1}}{d\mathbb{P}_{\theta_0}}(x)\qquad \mathbb{P}_{\theta_0}\text{-a.e. } x.
\] 
\item[(iii)] We have $\chi^2(\mathbb{P}_{g\theta_1},\mathbb{P}_{g\theta_0})=\chi^2(\mathbb{P}_{\theta_1},\mathbb{P}_{\theta_0})$.
\end{itemize} 
\end{lemma}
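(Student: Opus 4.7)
The plan is to establish the three parts in order, since (iii) follows from (ii) which follows from (i), and the work really just amounts to a careful change-of-variables argument.

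For (i), I would extend the defining equivariance identity $\mathbb{P}_{g\theta_0}(gA)=\mathbb{P}_{\theta_0}(A)$ from indicators to all nonnegative measurable $f$ by the usual layer-cake procedure. Taking $f=\mathbf{1}_B$ and writing $B=g(g^{-1}B)$, the equivariance assumption (applied with $A=g^{-1}B$) gives $\int \mathbf{1}_B(x)\,d\mathbb{P}_{g\theta_0}(x)=\mathbb{P}_{g\theta_0}(B)=\mathbb{P}_{\theta_0}(g^{-1}B)=\int \mathbf{1}_B(gx)\,d\mathbb{P}_{\theta_0}(x)$. Linearity extends this to nonnegative simple functions and monotone convergence delivers the identity for arbitrary nonnegative measurable $f$.

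For (ii), absolute continuity is immediate: if $\mathbb{P}_{g\theta_0}(A)=0$, then $\mathbb{P}_{\theta_0}(g^{-1}A)=0$ by equivariance at $\theta_0$, hence $\mathbb{P}_{\theta_1}(g^{-1}A)=0$ by assumption, and finally $\mathbb{P}_{g\theta_1}(A)=0$ by equivariance at $\theta_1$. To identify the Radon-Nikodym derivative, I will set $\phi(x):=(d\mathbb{P}_{g\theta_1}/d\mathbb{P}_{g\theta_0})(gx)$ and check that $\phi$ is a density of $\mathbb{P}_{\theta_1}$ with respect to $\mathbb{P}_{\theta_0}$; uniqueness $\mathbb{P}_{\theta_0}$-a.e.\ then yields the stated formula. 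For any measurable $A$, applying (i) to $f(y)=\mathbf{1}_{gA}(y)(d\mathbb{P}_{g\theta_1}/d\mathbb{P}_{g\theta_0})(y)$ gives
\[
\int_A\phi(x)\,d\mathbb{P}_{\theta_0}(x)=\int f(gx)\,d\mathbb{P}_{\theta_0}(x)=\int f(y)\,d\mathbb{P}_{g\theta_0}(y)=\mathbb{P}_{g\theta_1}(gA)=\mathbb{P}_{\theta_1}(A),
\]
which is exactly what is required.

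Part (iii) then follows with no real work. If $\mathbb{P}_{\theta_1}\not\ll\mathbb{P}_{\theta_0}$, applying the absolute-continuity half of (ii) with $g$ replaced by $g^{-1}$ shows that $\mathbb{P}_{g\theta_1}\not\ll\mathbb{P}_{g\theta_0}$, so both $\chi^2$-divergences are infinite. Otherwise, combining (i), applied to $f(y)=(d\mathbb{P}_{g\theta_1}/d\mathbb{P}_{g\theta_0})(y)^2$, with the transformation formula from (ii) gives
\[
\int\Big(\frac{d\mathbb{P}_{g\theta_1}}{d\mathbb{P}_{g\theta_0}}(y)\Big)^2 d\mathbb{P}_{g\theta_0}(y)=\int\Big(\frac{d\mathbb{P}_{g\theta_1}}{d\mathbb{P}_{g\theta_0}}(gx)\Big)^2 d\mathbb{P}_{\theta_0}(x)=\int\Big(\frac{d\mathbb{P}_{\theta_1}}{d\mathbb{P}_{\theta_0}}(x)\Big)^2 d\mathbb{P}_{\theta_0}(x),
\]
and subtracting $1$ yields equality of the $\chi^2$-divergences.

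The only delicate point — and hence the main obstacle — is keeping the bookkeeping of the group action consistent: the map $x\mapsto gx$ is a measurable bijection with inverse $x\mapsto g^{-1}x$, and one has to decide in each step whether equivariance is invoked at $\theta$ or at $g\theta$ (equivalently, whether one writes $B$ as $g(g^{-1}B)$ or vice versa). Once this is handled cleanly, the arguments above are routine measure theory.
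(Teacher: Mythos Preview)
Your proof is correct and follows exactly the route the paper indicates: the paper itself gives no detailed argument, only the one-line sketch that ``(i) follows from Definition~\ref{def_equivariant} and standard measure-theoretic arguments, (ii) is a consequence of (i), and (iii) is a consequence of~(ii).'' You have supplied precisely those standard arguments (indicator--simple--monotone convergence for (i), verification of the Radon--Nikodym density via (i) for (ii), and change of variables via (i) and (ii) for (iii)), so there is nothing to add.
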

Claim (i) follows from Definition \ref{def_equivariant} and standard measure-theoretic arguments, (ii) is a consequence of (i), and (iii) is a consequence of~(ii).

Given an action $G$ on a measurable space $(\mathcal{X},\mathcal{F})$, there is an induced action on the set of all probability measures on $(\mathcal{X},\mathcal{F})$ defined by $g\mathbb{P}(A)=\mathbb{P}(g^{-1}A)$, $g\in G$, $\mathbb{P}$ probability measure on $(\mathcal{X},\mathcal{F})$, and $A\in\mathcal{F}$. Using this action, a family of probability measures $\mathcal{P}$ on $(\mathcal{X},\mathcal{F})$  is called $G$-invariant if for each $\mathbb{P}\in\mathcal{P}$, we have $g\mathbb{P}\in \mathcal{P}$ for all $g\in G$. On the other hand, the term equivariant is used when the group actions on sample space and parameter class lead to the specific relation given in Definition~\ref{def_equivariant}. A more general definition that also applies to estimators is as follows.

\begin{definition}
Suppose that $G$ acts on both $\mathcal{X}$ and $\mathcal{Y}$. Then a function $\psi:\mathcal{X}\rightarrow\mathcal{Y}$ is called $G$-equivariant if $\psi(gx)=g\psi(x)$ for every $g\in G$, $x\in \mathcal{X}$.
\end{definition} 
The following lemma collects some properties on the minimax and Bayes risk of equivariant statistical models.
Let $(\mathcal{X},\mathcal{F},(\mathbb{P}_\theta)_{\theta\in\Theta})$ be a statistical model and $\psi:\Theta\rightarrow \mathbb{R}^m$ be a derived parameter. For a loss function $L:\mathbb{R}^m\times \mathbb{R}^m\rightarrow [0,\infty)$ and an estimator $\hat\psi:\mathcal{X}\rightarrow \mathbb{R}^m$, the risk function is given by $\mathbb{E}_\theta L(\hat\psi(X),\psi(\theta))=\int L(\hat\psi(x),\psi(\theta))\,d\mathbb{P}_\theta(x)$, where $X$ is an observation from the model with distribution $\mathbb{P}_\theta$.
\begin{lemma}\label{LemConstantRisk}
Suppose that $(\mathbb{P}_\theta)_{\theta\in\Theta}$ and $\psi$ are $G$-equivariant and that the loss function $L$ is convex in the first argument and satisfies $L(ga,gb)=L(a,b)$ for every $g\in G$, $a,b\in\mathbb{R}^m$. Then, for any $G$-equivariant estimator $\hat\psi$, 
\[
\mathbb{E}_\theta L(\hat\psi(X),\psi(\theta))=\mathbb{E}_{g\theta} L(\hat\psi(X),\psi(g\theta))\qquad\forall
\theta\in\Theta,g\in G.
\]
Suppose additionally that $G$ is a compact group with Haar measure $\mu$. Then, for any estimator $\tilde\psi$, the estimator $\hat\psi$ defined by $\hat\psi(x)=\int_Gg^{-1}\tilde\psi(gx)\,\mu(dg)$, $x\in\mathcal{X}$ (provided that it exists) is $G$-equivariant with
\begin{align*}
\mathbb{E}_\theta L(\hat\psi(X),\psi(\theta))\leq \int_G \mathbb{E}_{g\theta} L(\tilde\psi(X),\psi(g\theta)) d\mu(g) 
\qquad\forall \theta\in\Theta.
\end{align*}
\end{lemma}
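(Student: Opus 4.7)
The plan is to break the proof into two independent parts and exploit three ingredients: the equivariance of $(\mathbb{P}_\theta)$ (via Lemma~\ref{LemPropGEquiv}(i)), the equivariance of $\psi$ and $\hat\psi$, and the invariance $L(ga,gb)=L(a,b)$. The only nontrivial analytic tool needed is Jensen's inequality for the second part.

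For the first statement, I would start from the right-hand side and push $g$ through each object. Using Lemma~\ref{LemPropGEquiv}(i) with the nonnegative function $x\mapsto L(\hat\psi(x),\psi(g\theta))$,
\begin{equation*}
\mathbb{E}_{g\theta} L(\hat\psi(X),\psi(g\theta)) = \int_\mathcal{X} L(\hat\psi(gx),\psi(g\theta))\,d\mathbb{P}_\theta(x).
\end{equation*}
Apply equivariance of $\hat\psi$ and $\psi$ to rewrite the integrand as $L(g\hat\psi(x),g\psi(\theta))$, then use $G$-invariance of $L$ to cancel the $g$, giving $L(\hat\psi(x),\psi(\theta))$ and hence $\mathbb{E}_\theta L(\hat\psi(X),\psi(\theta))$.

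For the second statement, I would first verify that the average $\hat\psi(x)=\int_G g^{-1}\tilde\psi(gx)\,d\mu(g)$ is $G$-equivariant. For any $h\in G$, substitute $g'=gh$ in the defining integral and use right-invariance of Haar measure (equation~\eqref{EqDefHaarSO}, generalized to a compact group) to obtain $\hat\psi(hx) = h\int_G (g')^{-1}\tilde\psi(g'x)\,d\mu(g') = h\hat\psi(x)$. Then apply Jensen's inequality to the convex first argument:
\begin{equation*}
L(\hat\psi(x),\psi(\theta)) = L\!\left(\int_G g^{-1}\tilde\psi(gx)\,d\mu(g),\psi(\theta)\right) \leq \int_G L(g^{-1}\tilde\psi(gx),\psi(\theta))\,d\mu(g).
\end{equation*}
Now use invariance of $L$ followed by equivariance of $\psi$ to rewrite the integrand as $L(\tilde\psi(gx),g\psi(\theta)) = L(\tilde\psi(gx),\psi(g\theta))$. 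Integrating both sides against $d\mathbb{P}_\theta(x)$, applying Fubini, and finally invoking Lemma~\ref{LemPropGEquiv}(i) once more to convert $\int_\mathcal{X} L(\tilde\psi(gx),\psi(g\theta))\,d\mathbb{P}_\theta(x)$ into $\mathbb{E}_{g\theta}L(\tilde\psi(X),\psi(g\theta))$ yields the claimed bound.

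The only point requiring minor care is the interchange of integration orders and the measurability of $(g,x)\mapsto g^{-1}\tilde\psi(gx)$, which is guaranteed by the blanket measurability assumptions stated just before Definition~\ref{def_equivariant} together with joint measurability on $G\times\mathcal{X}$; this is the closest the argument comes to being an obstacle, but since $G$ is compact and the problem is set up precisely so that all ingredients fit, no genuine difficulty arises.
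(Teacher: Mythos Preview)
The paper states Lemma~\ref{LemConstantRisk} without proof, treating it as a standard fact about equivariant models (it is introduced as ``the following lemma collects some properties\ldots''). Your argument is correct and is the standard one: the first claim follows by pushing $g$ through via Lemma~\ref{LemPropGEquiv}(i), equivariance of $\hat\psi$ and $\psi$, and $G$-invariance of $L$; the second claim combines the Haar-averaging construction with Jensen's inequality and the same change-of-variables reasoning. There is nothing to compare against, and no gap in what you wrote.
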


Let us conclude this section by showing how the two statistical models discussed in Section \ref{sec_fisher_information} can be realized as equivariant models.

\begin{example}\label{ex_PCA_equivariant}
For $\mathcal{X}=(\mathbb{R}^{p})^n$ equipped with its Borel $\sigma$-algebra $\mathscr{B}_{\mathbb{R}^p}^{\otimes n}$, consider the family of probability distributions $\{\mathbb{P}_U:U\in SO(p)\}$ given in~\eqref{eq_stat_experiment_version} with $\mathbb{P}_U=\mathcal{N}(0, U\Lambda U^T)^{\otimes n}$, $\Lambda=\operatorname{diag}(\lambda_1,\dots,\lambda_p)$ and $\lambda_1\geq  \cdots \geq \lambda_p> 0$. In this case we have that if $(X_1,\dots,X_n)$ has distribution $\mathbb{P}_U$, then $(VX_1,\dots,VX_n)$ has distribution $\mathbb{P}_{VU}$, $U,V\in SO(p)$. Hence, letting $SO(p)$ act coordinate-wise on $(\mathbb{R}^p)^n$, the latter property translates to $\mathbb{P}_{VU}(VA)=\mathbb{P}_U(A)$ for every $U,V\in SO(p)$ and every $A\in \mathscr{B}_{\mathbb{R}^p}^{\otimes n}$, meaning that the family is $SO(p)$-equivariant.
\end{example}

\begin{example}\label{ex_matrix_denoising_equivariant}
For $\mathcal{X}=\mathbb{R}^{p\times p}$ equipped with its Borel $\sigma$-algebra $\mathscr{B}_{\mathbb{R}^{p\times p}}$, consider the family of probability distributions $\{\mathbb{P}_U:U\in SO(p)\}$ with $\mathbb{P}_U$ being the distribution of $X=U\Lambda U^T+\sigma W$, where $\sigma>0$, $\Lambda=\operatorname{diag}(\lambda_1,\dots,\lambda_p)$ with $\lambda_1\geq  \cdots \geq \lambda_p\geq 0$ and $W$ is a GOE matrix. Since the GOE ensemble is invariant under orthogonal conjugation, the random variable $X=U\Lambda U^T+\sigma W$ has the property that $VXV^T$ is equal in distribution to $VU\Lambda (VU)^T+\sigma W$, $U,V\in SO(p)$. Hence, letting $SO(p)$ act on $\mathbb{R}^{p\times p}$ by conjugation, the latter property translates to the fact that the family is $SO(p)$-equivariant.
\end{example}

\section{Information inequalities under group action}\label{sec_information_inequalities_group_action}
\subsection{An equivariant Chapman-Robbins inequality}
The one-dimensional Chapman-Robbins inequality is a simple lower bound for the variance of an unbiased estimator of a real-valued parameter. Letting $(\mathcal{X},\mathcal{F},(\mathbb{P}_\theta)_{\theta\in\Theta})$ be a statistical model, $\psi:\Theta\rightarrow \mathbb{R}^m$ be a derived parameter, $\hat\psi:\mathcal{X}\rightarrow \mathbb{R}^m$ be an unbiased estimator (i.e., $\mathbb{E}_\theta \hat\psi(X)=\psi(\theta)$ for all $\theta\in\Theta$ with $X$ being an observation from the model), a multidimensional version says that 
\begin{align}\label{eq_chapman_robbins_multi}
\mathbb{E}_{\theta}\|\hat\psi(X)-\psi(\theta)\|_2^2\geq \frac{\big(\sum_{j=1}^m(\psi_j(\theta_j)-\psi_j(\theta))\big)^2}{\sum_{j=1}^m\chi^2(\mathbb{P}_{\theta_j},\mathbb{P}_{\theta})}
\end{align}
for every $\theta,\theta_1,\dots,\theta_m\in\Theta$ and $\|\cdot\|_2$ being the Euclidean norm. The proof is simple and based on the identity
\begin{align}\label{eq_chapman_robbins}
\sum_{j=1}^m\int_\mathcal{X}(\hat\psi_j(x)-\psi_j(\theta))(d\mathbb{P}_{\theta_j}(x)-d\mathbb{P}_{\theta}(x))=\sum_{j=1}^m(\psi_j(\theta_j)-\psi_j(\theta))
\end{align}
in combination with the Cauchy-Schwarz inequality (applied twice). 
In the case that $\Theta\subseteq\mathbb{R}^d$ and under additional regularity conditions (e.g., the differentiability of $\psi$ at $\theta$ and the $L^2$-differentiability of the model), \eqref{eq_chapman_robbins_multi} implies the classical Cramér-Rao lower bound (in the case $m>1$ the quadratic risk corresponds to the trace of the covariance matrix of the estimator, and we get the Cramér-Rao lower bound when the trace is applied to both sides). This can be easily seen by setting $\theta_j=\theta+tv_j$, $t\rightarrow 0$, and optimizing in the $v_j$ (see, e.g., \cite{R14} for the case $m=1$).  

If $\hat\psi$ is biased, then the above approach does not work anymore. Based on a variation of \eqref{eq_chapman_robbins}, the following proposition provides a Bayesian version of the Chapman-Robbins inequality for equivariant statistical models.
%

\begin{proposition}\label{prop_bayesian_chapman_robbins_ineq}
Let $(\mathcal{X},\mathcal{F},(\mathbb{P}_\theta)_{\theta\in\Theta})$ be a statistical model. Let $G$ be a topological group acting on $(\mathcal{X},\mathcal{F})$ and $\Theta$ such that $(\mathbb{P}_\theta)_{\theta\in\Theta}$ is $G$-equivariant. Suppose that $g\mapsto \mathbb{P}_{g\theta}(A)$ is measurable for every $A\in\mathcal{F}$, $\theta\in\Theta$. Let $\Pi$ be a Borel probability measure on $G$. Let $\psi:\Theta\rightarrow \mathbb{R}^m$ be a derived parameter such that $\int_G\|\psi(g\theta)\|^2d\Pi(g)<\infty$ for all $\theta\in\Theta$ and let $\hat\psi:\mathcal{X}\rightarrow \mathbb{R}^m$ be an estimator. Then, for each $\theta\in\Theta$ and each $h_1,\dots,h_m\in G$, we have  
\begin{align*}
&\int_{G}\mathbb{E}_{g\theta}\|\hat\psi(X)-\psi(g\theta)\|_2^2\,d\Pi(g)\\
&\geq \frac{\big( \sum_{j=1}^m\int_G(\psi_j(gh_j^{-1}\theta)-\psi_j(g\theta))\,d\Pi(g)\big)^2}{\sum_{j=1}^m\big(\chi^2(\mathbb{P}_{h_j\theta},\mathbb{P}_\theta)+\chi^2(\Pi\circ R_{h_j},\Pi)+\chi^2(\mathbb{P}_{h_j\theta},\mathbb{P}_\theta)\chi^2(\Pi\circ R_{h_j},\Pi)\big)},
\end{align*}
wuth $X$ being an observation from the model and $\Pi\circ R_{h_j}$ defined by $\Pi\circ R_{h_j}(B)=\Pi(Bh_j)$ for every Borel set $B$ in $G$.
\end{proposition}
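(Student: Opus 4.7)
The plan is to carry out a Chapman-Robbins argument on the joint experiment on $\mathcal{X}\times G$. Define the honest joint distribution $Q(dx,dg)=\mathbb{P}_{g\theta}(dx)\,\Pi(dg)$ and, for each $j\in\{1,\dots,m\}$, the tilted probability measure $Q_j(dx,dg)=\mathbb{P}_{gh_j\theta}(dx)\,(\Pi\circ R_{h_j})(dg)$. The specific pairing in $Q_j$ is the crux: shifting the likelihood by $h_j$ (so the parameter becomes $gh_j\theta$) and the prior by $R_{h_j}$ (which, via the identity $\int F(g)\,d(\Pi\circ R_{h_j})(g)=\int F(gh_j^{-1})\,d\Pi(g)$, amounts to a parameter shift by $h_j^{-1}$) simultaneously makes the $X$-marginal of $Q_j$ coincide with that of $Q$ and makes $\chi^2(Q_j,Q)$ factor into the two advertised pieces.

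For the denominator I would compute $\chi^2(Q_j,Q)$. By Lemma \ref{LemPropGEquiv}(ii), the Radon-Nikodym derivative factors as $(dQ_j/dQ)(x,g)=\pi_j(g)\,f_j(g^{-1}x)$, where $\pi_j=d(\Pi\circ R_{h_j})/d\Pi$ and $f_j=d\mathbb{P}_{h_j\theta}/d\mathbb{P}_\theta$. Fubini together with Lemma \ref{LemPropGEquiv}(i) then gives $\int f_j(g^{-1}x)^2\,\mathbb{P}_{g\theta}(dx)=\int f_j^2\,d\mathbb{P}_\theta=1+\chi^2(\mathbb{P}_{h_j\theta},\mathbb{P}_\theta)$, independent of $g$, whence
$$\chi^2(Q_j,Q)=\bigl(1+\chi^2(\mathbb{P}_{h_j\theta},\mathbb{P}_\theta)\bigr)\bigl(1+\chi^2(\Pi\circ R_{h_j},\Pi)\bigr)-1,$$
which is exactly the $j$-th summand in the denominator. (If either chi-squared is infinite the corresponding term is vacuous and the $j$-th contribution may be dropped.)

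For the numerator set $F_j(x,g)=\hat\psi_j(x)-\psi_j(g\theta)$. The change-of-variable rule above together with equivariance yields $E_{Q_j}[\hat\psi_j(X)]=\int\mathbb{E}_{gh_j\theta}[\hat\psi_j(X)]\,d(\Pi\circ R_{h_j})(g)=\int\mathbb{E}_{g\theta}[\hat\psi_j(X)]\,d\Pi(g)=E_Q[\hat\psi_j(X)]$, so only the parameter contributes to the mean discrepancy and
$$E_Q[F_j]-E_{Q_j}[F_j]=\int_G\bigl(\psi_j(gh_j^{-1}\theta)-\psi_j(g\theta)\bigr)\,d\Pi(g),$$
precisely the $j$-th numerator summand.

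Finally I apply Cauchy-Schwarz twice. For each $j$, $E_Q F_j-E_{Q_j}F_j=\int F_j(1-dQ_j/dQ)\,dQ$, and $\|1-dQ_j/dQ\|_{L^2(Q)}^2=\chi^2(Q_j,Q)$, so Cauchy-Schwarz in $L^2(Q)$ gives $|E_Q F_j-E_{Q_j}F_j|\le\|F_j\|_{L^2(Q)}\sqrt{\chi^2(Q_j,Q)}$. Setting $\epsilon_j=\operatorname{sign}(E_Q F_j-E_{Q_j}F_j)$ and replacing $F_j$ by $\epsilon_j F_j$ so all differences become non-negative, summing, and applying Cauchy-Schwarz in the index $j$ yields
$$\sum_{j=1}^m\bigl|E_Q F_j-E_{Q_j}F_j\bigr|\le\Bigl(\sum_{j=1}^m\|F_j\|_{L^2(Q)}^2\Bigr)^{1/2}\Bigl(\sum_{j=1}^m\chi^2(Q_j,Q)\Bigr)^{1/2}.$$
Since $\sum_j\|F_j\|_{L^2(Q)}^2=\int\mathbb{E}_{g\theta}\|\hat\psi(X)-\psi(g\theta)\|_2^2\,d\Pi(g)$, squaring and rearranging gives the claimed bound. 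The genuine obstacle is engineering the correct tilted measure $Q_j$ so that the numerator and denominator match up simultaneously; once that choice is made, everything else is standard manipulation with equivariance and Cauchy-Schwarz.
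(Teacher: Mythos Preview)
Your proof is correct and follows essentially the same route as the paper's: both introduce the joint measures $Q=\mathbb{P}_{g\theta}\otimes\Pi$ and $Q_j=\mathbb{P}_{gh_j\theta}\otimes(\Pi\circ R_{h_j})$, use equivariance (Lemma \ref{LemPropGEquiv}) to factor $\chi^2(Q_j,Q)$ and to cancel the $\hat\psi_j$-contributions in the numerator, and conclude with two applications of Cauchy--Schwarz. Your sign-adjustment step actually yields the slightly stronger inequality with $\bigl(\sum_j|\cdot|\bigr)^2$ in the numerator, which of course dominates the stated $\bigl(\sum_j\cdot\bigr)^2$.
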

\begin{remark}
If $G$ is locally compact, then it is natural to choose a probability density function $\pi$ with respect to the right Haar measure $\mu$ on $G$. Then we have $\Pi\circ R_{h}(A)=\int_{Ah}\pi(g)\,d\mu(g)=\int_A\pi(gh)\,d\mu(g)$. Note that the choice $\pi\equiv 1$ leads to the trivial lower bound zero, meaning that in applications one has to construct non-uniform prior densities.
\end{remark}
%
%

\begin{proof}[Proof of Proposition \ref{prop_bayesian_chapman_robbins_ineq}]
We may assume that $\mathbb{P}_{h_j\theta}\ll\mathbb{P}_{\theta}$ and $\Pi\circ R_{h_j}\ll\Pi$ for every $j=1,\dots,m$ and that the Bayes risk of $\hat\psi$ is finite because otherwise the result is trivial. Consider the expression
\begin{equation}\label{EqBasicIdentityIB}
\sum_{j=1}^m\int_G\int_{\mathcal{X}}(\hat\psi_j(x)-\psi_j(g\theta))(d\mathbb{P}_{g\theta}(x)d\Pi(g)-d\mathbb{P}_{gh_j\theta}(x)d\Pi\circ R_{h_j}(g))
\end{equation}
Applying the transformation formula (\cite[Theorem 4.1.11]{MR1932358}), the terms involving $\hat\psi_j$ cancel each other, and \eqref{EqBasicIdentityIB} is equal to 
\[
-\sum_{j=1}^m\int_G\psi_j(g\theta)(d\Pi(g)-d\Pi\circ R_{h_j}(g))=\sum_{j=1}^m\int_G(\psi_j(gh_j^{-1}\theta)-\psi_j(g\theta))\,d\Pi(g).
\] 
On the other hand, by the Cauchy-Schwarz inequality, the absolute value of \eqref{EqBasicIdentityIB} is bounded by
\begin{align*}
\sum_{j=1}^m&\Big(\int_G\int_{\mathcal{X}}(\hat\psi_j(x)-\psi_j(g\theta))^2\,d\mathbb{P}_{g\theta}(x)d\Pi(g)\Big)^{1/2}\\
\times&\Big(\int_G\int_{\mathcal{X}}\Big(1-\frac{d\mathbb{P}_{gh_j\theta}}{d\mathbb{P}_{g\theta}}(x)\frac{d\Pi\circ R_{h_j}}{d\Pi}(g)\Big)^2\,d\mathbb{P}_{g\theta}(x)d\Pi(g)\Big)^{1/2}.
\end{align*}
Since $(\mathbb{P}_\theta)_{\theta\in\Theta}$ is $G$-equivariant, Lemma \ref{LemPropGEquiv} (ii) yields that the last term is equal to
\begin{align*}
\sum_{j=1}^m&\Big(\int_G\int_{\mathcal{X}}(\hat\psi_j(x)-\psi_j(g\theta))^2\,d\mathbb{P}_{g\theta}(x)d\Pi(g)\Big)^{1/2}\\
\times&\Big(\int_G\int_{\mathcal{X}}\Big(1-\frac{d\mathbb{P}_{h_j\theta}}{d\mathbb{P}_{\theta}}(x)\frac{d\Pi\circ R_{h_j}}{d\Pi}(g)\Big)^2\,d\mathbb{P}_{\theta}(x)d\Pi(g)\Big)^{1/2}.
\end{align*}
The term in the second brackets is a $\chi^2$ divergence of two product measures. Thus, by \eqref{EqChiSquareP1}, the last term is equal to
\begin{align*}
\sum_{j=1}^m&\Big(\int_G\int_{\mathcal{X}}(\hat\psi_j(x)-\psi_j(g\theta))^2\,d\mathbb{P}_{g\theta}(x)d\Pi(g)\Big)^{1/2}\\
\times&\Big((\chi^2(\mathbb{P}_{h_j\theta},\mathbb{P}_\theta)+1)(\chi^2(\Pi\circ R_{h_j},\Pi)+1)-1\Big)^{1/2}.
\end{align*}
Applying the Cauchy-Schwarz inequality in $\mathbb{R}^m$, this is bounded by 
\begin{align*}
\Big(\int_{G}&\mathbb{E}_{g\theta}\|\hat\psi(X)-\psi(g\theta)\|_2^2\,d\Pi(g)\Big)^{1/2}\\
\times&\Big(\sum_{j=1}^m\big((\chi^2(\mathbb{P}_{h_j\theta},\mathbb{P}_\theta)+1)(\chi^2(\Pi\circ R_{h_j},\Pi)+1)-1\big)\Big)^{1/2},
\end{align*}
and the claim follows.
\end{proof}

\subsection{An equivariant van Trees inequality}
Under additional regularity conditions, Proposition \ref{prop_bayesian_chapman_robbins_ineq} implies a Bayesian version of the Cramér-Rao inequality (similarly as the classical Chapman-Robbins inequality implies the Cramér-Rao inequality). Yet, since Proposition~\ref{prop_bayesian_chapman_robbins_ineq} involves integrals with respect to the prior distribution, this requires arguments on the differentiation of integrals. Such justifications are more simple in the case of compact groups and in this section, we illustrate this for the special case where both $\Theta$ and $G$ coincide with the special orthogonal group $SO(p)$. While similar results can be formulated in other scenarios (cf.~\cite{MR3862091,GaoZhang}), this allows us to cover our two motivating examples from Section \ref{sec_group_action} on principal component analysis and the matrix denoising problem. More precisely, we consider a statistical model $(\mathcal{X},\mathcal{F},(\mathbb{P}_U)_{U\in SO(p)})$ satisfying the following two assumptions.
\begin{assumption}\label{ass_equivariant}
The family $\{\mathbb{P}_U:U\in SO(p)\}$ is $SO(p)$-equivariant (i.e., $SO(p)$ acts on $\mathcal{X}$ such that $\mathbb{P}_{VU}(VA)=\mathbb{P}_{U}(A)$ for all $U,V\in SO(p)$ and all $A\in\mathcal{F}$).
\end{assumption}
\begin{assumption}\label{ass_chi_square} There is a bilinear form $\mathcal{I}_{I_p}:\mathfrak{so}(p)\times \mathfrak{so}(p)\rightarrow \mathbb{R}$ such that for all $\xi\in\mathfrak{so}(p)$,
\begin{align}\label{eq:Fisher:info}
\chi^2(\mathbb{P}_{\exp(t\xi)},\mathbb{P}_{I_p})/t^2\rightarrow \mathcal{I}_{I_p}(\xi,\xi)\quad \text{as }t\rightarrow 0.
\end{align}
\end{assumption}
Assumption \ref{ass_equivariant} implies that the family $\{\mathbb{P}_g:g\in G\}$ is $G$-equivariant for any closed subgroup $G$ of $SO(p)$, and our goal is to apply Proposition \ref{prop_bayesian_chapman_robbins_ineq} combined with a limiting argument in this scenario.

\begin{proposition}\label{prop_van_Trees_abstract}
Let $(\mathcal{X},\mathcal{F},(\mathbb{P}_U)_{U\in SO(p)})$ be a statistical model such that Assumptions \ref{ass_equivariant} and \ref{ass_chi_square} are satisfied. Let $G$ be a closed subgroup of $SO(p)$ with Lie algebra $\mathfrak{g}$. Let $\psi:G\rightarrow \mathbb{R}^m$ be a continuous function, $\pi:G\rightarrow [0,\infty)$ be a continuous probability density function with respect to the Haar measure $dg$ on $G$ and $\xi_1,\dots,\xi_m\in\mathfrak{g}$. Suppose that $\psi$ and $\pi$ are differentiable with bounded derivatives in the sense that
\begin{align*}
\forall g\in G,\forall j\in\{1,\dots,m\},\qquad |d\psi_j(g)g\xi_j|,|d\pi(g)g\xi_j|\leq M
\end{align*}
for some constant $M>0$. Then, for any estimator $\hat\psi=\hat\psi(X_1,\dots,X_n)$ taking values in $\mathbb{R}^m$, we have
\begin{align*}
\int_G \mathbb{E}_g\|\hat\psi-\psi(g)\|_{2}^2\,\pi(g)dg\geq \frac{\big( \int_G\sum_{j=1}^md\psi_{j}(g)g\xi_j\,\pi(g)dg\big)^2}{\sum_{j=1}^m \big(\mathcal{I}_{I_p}(\xi_j,\xi_j)+\int_G\frac{(d\pi(g)g\xi_j)^2}{\pi(g)}\,dg\big)},
\end{align*}
where $\mathcal{I}_{I_p}(\cdot,\cdot)$ is the Fisher information form given in Lemma \ref{lem_fisher_information} and $\|\cdot\|_{2}$ denotes the Euclidean norm in $\mathbb{R}^m$.
\end{proposition}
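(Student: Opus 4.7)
The plan is to apply the Bayesian Chapman--Robbins inequality of Proposition \ref{prop_bayesian_chapman_robbins_ineq} with base point $\theta=I_p$, alternatives $h_j=\exp(t\xi_j)\in G$ for small $t>0$, and prior $d\Pi(g)=\pi(g)\,dg$ on $G$, then let $t\to 0$. Because $G$ is closed under multiplication and $\Pi$ is supported in $G$, only the restriction of $\psi_j$ to $G$ is ever evaluated, matching the hypotheses. The resulting inequality has the form $\mathrm{LHS}\ge N(t)^2/D(t)$, and once I show that $N(t)/t$ and $D(t)/t^2$ have finite limits, the bound follows as
\[
\mathrm{LHS}\;\ge\;\lim_{t\to 0}\frac{N(t)^2}{D(t)}=\frac{\bigl(\lim_{t\to 0}N(t)/t\bigr)^2}{\lim_{t\to 0}D(t)/t^2}.
\]

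For the denominator, the statistical factor $\chi^2(\mathbb{P}_{h_j},\mathbb{P}_{I_p})/t^2\to \mathcal{I}_{I_p}(\xi_j,\xi_j)$ is Assumption \ref{ass_chi_square} directly. For the prior factor, $\Pi\circ R_{h_j}$ has Haar density $g\mapsto\pi(gh_j)$, and compactness of $G$ makes its Haar measure bi-invariant, so differentiating $\int_G\pi(g\exp(s\xi_j))\,dg=1$ in $s$ at $0$ yields the key ``divergence'' identity $\int_G d\pi(g)[g\xi_j]\,dg=0$. Taylor expanding $\pi(g\exp(t\xi_j))=\pi(g)+t\,d\pi(g)[g\xi_j]+O(t^2)$ uniformly in $g$ and inserting into $\chi^2(\Pi\circ R_{h_j},\Pi)=\int_G\pi(gh_j)^2/\pi(g)\,dg-1$, the constant term cancels with $-1$, the linear term vanishes by the divergence identity, and the quadratic term yields $\int_G (d\pi(g)g\xi_j)^2/\pi(g)\,dg$. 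The cross term $\chi^2\cdot\chi^2=O(t^4)/t^2\to 0$ is negligible.

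For the numerator, bi-invariance applied to $\psi_j\pi$ gives $\int_G(\psi_j(g\exp(-t\xi_j))-\psi_j(g))\pi(g)\,dg=\int_G\psi_j(g)(\pi(g\exp(t\xi_j))-\pi(g))\,dg$, which upon division by $t$ tends to $\int_G\psi_j(g)\,d\pi(g)[g\xi_j]\,dg=-\int_G d\psi_j(g)[g\xi_j]\,\pi(g)\,dg$, the last step being the divergence identity applied to $\psi_j\pi$. Since $N$ enters squared, the sign is irrelevant and one recovers the numerator stated in the proposition. The main technical obstacle is justifying the interchanges of limit and integral in these expansions: the hypothesis $|d\psi_j(g)g\xi_j|,|d\pi(g)g\xi_j|\le M$ together with continuity of $\pi$ on the compact group $G$ provides the uniform control of the first- and second-order Taylor remainders required for dominated convergence. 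Substituting the three limits into the Chapman--Robbins bound then yields Proposition \ref{prop_van_Trees_abstract}.
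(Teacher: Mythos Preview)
Your overall strategy---apply Proposition~\ref{prop_bayesian_chapman_robbins_ineq} with $h_j=\exp(t\xi_j)$ and let $t\to 0$---is exactly the paper's. The treatment of the numerator via the substitution $g\mapsto g\exp(t\xi_j)$ and the divergence identity is correct and equivalent to the paper's direct differentiation of $\psi_j$; if anything it is slightly more roundabout, since $\int_G t^{-1}(\psi_j(g\exp(-t\xi_j))-\psi_j(g))\pi(g)\,dg\to -\int_G d\psi_j(g)g\xi_j\,\pi(g)\,dg$ follows at once from dominated convergence with the bound $M\|\pi\|_\infty$.

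There are, however, two genuine gaps in your handling of the prior $\chi^2$-term. First, the uniform expansion $\pi(g\exp(t\xi_j))=\pi(g)+t\,d\pi(g)g\xi_j+O(t^2)$ is \emph{not} available: the hypotheses provide only a bounded first derivative, no second-order smoothness, so you get at best a remainder $R(g,t)$ with $|R|\le 2tM$. Plugging this into $\int \pi(gh_j)^2/\pi(g)\,dg-1$ leaves terms $\int 2tA R/\pi$ and $\int R^2/\pi$ that are $O(t^2)$, not $o(t^2)$, and the argument stalls. The paper sidesteps this by first rewriting $\chi^2(\Pi\circ R_{h_j},\Pi)=\int_G(\pi(gh_j)-\pi(g))^2/\pi(g)\,dg$ (which is precisely your divergence identity, used once), and then applying dominated convergence directly to the squared difference quotient $\bigl((\pi(gh_j)-\pi(g))/t\bigr)^2/\pi(g)$, which is bounded by $M^2/\pi(g)$ and converges \emph{pointwise} to $(d\pi(g)g\xi_j)^2/\pi(g)$. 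No second-order information is needed.

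Second, this DCT step requires the dominating function $M^2/\pi(g)$ to be integrable, which fails when $\pi$ has zeros. You do not address this. The paper treats the case $\pi>0$ first (so that $\min_G\pi>0$ by compactness and continuity) and then removes the positivity assumption by replacing $\pi$ with $\pi_\epsilon=(\pi+\epsilon)/(1+\epsilon)$, applying the already-proved inequality to $\pi_\epsilon$, and letting $\epsilon\downarrow 0$ via monotone convergence for $\int (d\pi)^2/(\pi+\epsilon)$. Without this approximation step your argument is incomplete whenever $\pi$ is allowed to vanish, which the proposition permits.
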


\begin{remark}
Here, $d\psi_{j}(g)g\xi_j$ and $d\pi(g)g\xi_j$ denote the derivatives of $\psi_j$ and $\pi$ at the point $g$ in the direction $g\xi_j$ (cf. Section \ref{sec_fisher_information}).
\end{remark}

\begin{remark}
The information inequality involves only the Fisher information at $I_p$. This is due to the equivariance of the statistical model in \eqref{eq_stat_experiment_version}, leading to constant Fisher information form.
\end{remark}

\begin{remark}
Proposition \ref{prop_van_Trees_abstract} provides a van Trees type inequality in the context of equivariant statistical models. The van Trees inequality is a well-known lower bound technique that has been applied in a variety of problems (see, e.g., Tsybakov \cite{MR2724359} and the references therein for the classical one-dimensional inequality, Gill and Levit \cite{MR1354456} for multidimensional extensions, and Jupp \cite{MR2651957} for the case of smooth loss functions on manifolds).
\end{remark}


\begin{proof}[Proof of Proposition \ref{prop_van_Trees_abstract}]
Proposition \ref{prop_van_Trees_abstract} follows from Proposition \ref{prop_bayesian_chapman_robbins_ineq} applied with $h_j=\exp(t\xi_j)$, $t\rightarrow 0$, by standard measure-theoretic arguments on the differentiation of integrals where the integrand depends on a real parameter (cf. \cite[Corollary 5.9]{MR1312157}). Without loss of generality we may restrict ourselves to estimators with bounded Hilbert-Schmidt norm $\sup_{x\in \mathcal{X}}\|\hat\psi(x)\|_{\operatorname{HS}}<\infty$, such that the risk of the estimator is bounded. 

We first assume that $\pi(g)>0$ for all $g\in G$, which implies that that $\min _{g\in G}\pi(g)>0$, using that $G$ is compact and $\pi$ is continuous.
 By Assumption \ref{ass_chi_square}, we have, for every $j=1,\dots,m$,
\[
\chi^2(\mathbb{P}_{\exp(t\xi_j)},\mathbb{P}_{I_p})/t^2\rightarrow \mathcal{I}_{I_p}(\xi_j,\xi_j)\quad \text{as }t\rightarrow 0.
\]
Next, consider the term
\[
\chi^2(\Pi\circ R_{\exp(t\xi_j)},\Pi)/t^2=\int_{G}\frac{(\pi(g\exp(t\xi_j))-\pi(g))^2}{\pi(g)t^2}\,dg.
\]
By assumption, the function $s\mapsto \pi(g\exp(s\xi_j))$ is differentiable with derivative $s\mapsto d\pi(g\exp(s\xi_j))g\exp(s\xi_j)\xi_j$. By the boundedness assumption this derivative is bounded in $g\in G$ and $s\in\mathbb{R}$. Thus, we get that the difference quotient
\[
\frac{\pi(g\exp(t\xi_j))-\pi(g)}{t}
\]
is bounded in $g\in G$ and $t\in\mathbb{R}$. Moreover, it converges to $d\pi(g)g\xi_j$ as $t\rightarrow 0$. Thus the dominated convergence theorem (noting that $\pi$ is bounded away from zero) implies that for every  $j=1,\dots,m$, 
\[
\int_{G}\limits\frac{(\pi(g\exp(t\xi_j))-\pi(g))^2}{\pi(g)t^2}\,dg\rightarrow \int_G\frac{(d\pi(g)g\xi_j)^2}{\pi(g)}\,dg\quad \text{as }t\rightarrow 0.
\]
Similarly, using the boundedness assumption on the derivatives $d\psi_j$ this time, we get, for every  $j=1,\dots,m$,
\begin{align*}
&\int_{G} \frac{\psi_{j}(g\exp(-t\xi_j))-\psi_{j}(g)}{t}\,\pi(g)dg\rightarrow -\int_Gd\psi_{j}(g)g\xi_j\,\pi(g)dg\quad  \text{as } t\rightarrow 0.
\end{align*} 
Finally, the third term in the denominator in Proposition \ref{prop_bayesian_chapman_robbins_ineq} divided by $t^2$ vanishes as $t\rightarrow 0$.
Hence, for positive $\pi$, the claim follows from applying Proposition \ref{prop_bayesian_chapman_robbins_ineq} with $h_j=\exp(t\xi_j)$ and letting $t\rightarrow 0$.

It remains to consider the case that $\pi$ is not necessarily positive. Then we can consider $\pi_\epsilon=(\pi+\epsilon)/(1+\epsilon)$, for which the lower bound in Proposition~\ref{prop_van_Trees_abstract} holds by what we have shown so far. Letting $\epsilon$ go to zero, the Bayes risk with respect to the prior $\pi_\epsilon$ converges to the Bayes risk with respect to the prior $\pi$, and by the monotone convergence theorem, we have 
\begin{align*}
\int_G\frac{(d\pi_\epsilon(g)g\xi_j)^2}{\pi_\epsilon(g)}\,dg=\frac{1}{1+\epsilon}\int_G\frac{(d\pi(g)g\xi_j)^2}{\pi(g)+\epsilon}\,dg\rightarrow \int_G\frac{(d\pi(g)g\xi_j)^2}{\pi(g)}\,dg
\end{align*}
as $\epsilon\searrow 0$. The numerator is treated similarly.
\end{proof}

If $\pi$ is radially symmetric around $I_p$ (i.e., does only depend on the Hilbert-Schmidt distance $\|I_p-g\|_{\operatorname{HS}}^2=2p-2\operatorname{tr} (g)$), then we can write $\pi(g)=\tilde\pi(\operatorname{tr} (g))$ for some function $\tilde\pi:[-q,q]\rightarrow [0,\infty)$. 

\begin{proposition}\label{prop_van_Trees_abstract_version}
Let $(\mathcal{X},\mathcal{F},(\mathbb{P}_U)_{U\in SO(p)})$ be a statistical model such that Assumptions \ref{ass_equivariant} and \ref{ass_chi_square} are satisfied. Let $G$ be a closed subgroup of $SO(p)$ with Lie algebra $\mathfrak{g}$. Let $\psi:G\rightarrow \mathbb{R}^m$ be a continuous function and $\pi:G\rightarrow [0,\infty)$ be a probability density function with respect to the Haar measure $dg$ on $G$ of the form $\pi(g)=\tilde\pi(\operatorname{tr}(g))$ with $\tilde \pi:[-p,p]\rightarrow [0,\infty)$ continuously differentiable, and let $\xi_1,\dots,\xi_m\in\mathfrak{g}$. Suppose that for some $M>0$, $|d\psi_j(g)g\xi_j|\leq M$ for every $g\in G$ and every $j=1,\dots,p$. Then, for any estimator $\hat\psi=\hat\psi(X_1,\dots,X_n)$ taking values in $\mathbb{R}^m$, we have
\begin{align*}
\int_G \mathbb{E}_g\|\hat\psi-\psi(g)\|_{2}^2\,\tilde\pi(\operatorname{tr}(g))dg\geq \frac{\big( \int_G\sum_{j=1}^md\psi_{j}(g)g\xi_j\,\tilde\pi(\operatorname{tr}(g))dg\big)^2}{\sum_{j=1}^m \big(\mathcal{I}_{I_p}(\xi_j,\xi_j)+\int_G\frac{(\tilde\pi'(\operatorname{tr}(g))\operatorname{tr}(g\xi_j))^2}{\tilde\pi(\operatorname{tr}(g))}\,dg\big)}.
\end{align*}
\end{proposition}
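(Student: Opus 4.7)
The plan is to recognize that Proposition \ref{prop_van_Trees_abstract_version} is essentially a direct specialization of Proposition \ref{prop_van_Trees_abstract} once the directional derivative of the radially symmetric density $\pi(g) = \tilde\pi(\operatorname{tr}(g))$ is computed explicitly via the chain rule.

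First, I would compute the directional derivative of $\pi$ at $g$ in the direction $g\xi_j$. Writing the defining curve $t \mapsto g\exp(t\xi_j)$ and differentiating, the chain rule and the identity $\frac{d}{dt}\operatorname{tr}(g\exp(t\xi_j))\big|_{t=0} = \operatorname{tr}(g\xi_j)$ give
\begin{equation*}
d\pi(g)g\xi_j \;=\; \tilde\pi'(\operatorname{tr}(g))\,\operatorname{tr}(g\xi_j).
\end{equation*}
Substituting this into the Fisher-information-plus-prior denominator of Proposition \ref{prop_van_Trees_abstract} produces exactly the quantity $\int_G (\tilde\pi'(\operatorname{tr}(g))\operatorname{tr}(g\xi_j))^2/\tilde\pi(\operatorname{tr}(g))\,dg$ appearing in the statement. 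The numerator is unchanged because the assumption on $d\psi_j$ is carried over verbatim.

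Next, I would verify that the hypotheses of Proposition \ref{prop_van_Trees_abstract} are met. The boundedness of $d\psi_j(g)g\xi_j$ is assumed directly. For the prior, continuity of $\tilde\pi'$ on the compact interval $[-p,p]$ yields $\sup_{s\in[-p,p]}|\tilde\pi'(s)| < \infty$, and $|\operatorname{tr}(g\xi_j)| \le \sqrt{p}\,\|\xi_j\|_{\operatorname{HS}}$ is bounded uniformly in $g\in G\subseteq SO(p)$ by Cauchy-Schwarz (with $\|g\|_{\operatorname{HS}} = \sqrt{p}$). So $d\pi(g)g\xi_j = \tilde\pi'(\operatorname{tr}(g))\operatorname{tr}(g\xi_j)$ is uniformly bounded on $G$, and the continuity of $\pi$ is immediate from that of $\tilde\pi$. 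Hence Proposition \ref{prop_van_Trees_abstract} applies.

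The inequality in Proposition \ref{prop_van_Trees_abstract_version} then follows by direct substitution. There is essentially no obstacle beyond a careful chain-rule computation; the only mild subtlety is making sure that the compactness of $G$ and of $[-p,p]$ is invoked to justify boundedness of the derivative of the prior, so that the regularity hypothesis of Proposition \ref{prop_van_Trees_abstract} is genuinely satisfied and no further limiting or mollification argument is needed (the $\epsilon$-regularization that handles possibly vanishing $\pi$ is already carried out in the proof of Proposition \ref{prop_van_Trees_abstract}, so it need not be repeated here).
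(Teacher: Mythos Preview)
Your proposal is correct and matches the paper's own proof essentially line for line: compute $d\pi(g)g\xi_j=\tilde\pi'(\operatorname{tr}(g))\operatorname{tr}(g\xi_j)$ via the chain rule, bound it using $|\operatorname{tr}(g\xi_j)|\le\sqrt{p}\,\|\xi_j\|_{\operatorname{HS}}$ and the boundedness of $\tilde\pi'$ on $[-p,p]$, and then invoke Proposition~\ref{prop_van_Trees_abstract}. There is nothing to add.
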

\begin{remark}
The condition that $\tilde\pi$ is continuously differentiable can be weakened to $\tilde\pi$ absolutely continuous with bounded derivative $\tilde \pi'$. 
\end{remark}
\begin{proof}[Proof of Proposition \ref{prop_van_Trees_abstract_version}]
We have to check that $\pi$ from Proposition \ref{prop_van_Trees_abstract_version} satisfies the condition  of Proposition \ref{prop_van_Trees_abstract}. 
By definition, for $g\in G$ and $j=1,\dots,m$, we have $d\pi(g)g\xi_j=f'(0)$ with $f(t)= \tilde\pi(\operatorname{tr} (g\exp(t\xi_j))$. Hence,
\begin{align}\label{eq_van_Trees_abstract_version_derivative}
d\pi(g)g\xi_j=\operatorname{tr}(g\xi_j)\tilde\pi'(\operatorname{tr} (g)).
\end{align}
From the assumptions it follows that $\tilde\pi'$ is bounded in absolute value (say, by $C>0$). Using this and the Cauchy-Schwarz inequality, we conclude that 
\begin{align*}
\sup_{g\in G}|d\pi(g)g\xi_j|\leq C\sqrt{p}\|\xi_j\|_{\operatorname{HS}}
\end{align*}
Hence the assumptions of Proposition \ref{prop_van_Trees_abstract} are satisfied and the claim follows from Proposition \ref{prop_van_Trees_abstract_version} and \eqref{eq_van_Trees_abstract_version_derivative}.
\end{proof}
We conclude this section by providing a matrix representation of our lower bound. To achieve this, suppose that $\pi$ has finite Fisher information form
\begin{align*}
\mathcal{I}_\pi:\mathfrak{g}\times \mathfrak{g}\rightarrow \mathbb{R}, (\xi,\eta)\mapsto\int_G\frac{d\pi(g)g\xi\,d\pi(g)g\eta}{\pi(g)}\,dg
\end{align*}
satisfying the assumptions of Proposition \ref{prop_van_Trees_abstract}.
For $d=\dim \mathfrak{g}$ and a basis $L_1,\dots,L_d$ of $\mathfrak{g}$, we consider the following matrix representations 
\begin{align*}
M_{\mathcal{I}}=(\mathcal{I}_{I_p}(L_k,L_l))_{1\leq k,l\leq d},\qquad M_{\mathcal{I}_\pi}=(\mathcal{I}_\pi(L_k,L_l))_{1\leq k,l\leq d},
\end{align*} 
and
\begin{align*}
M_{d\psi,\pi}=\int_G(d\psi(g)gL_1,\dots,d\psi(g)gL_d)\,\pi(g)dg\in\mathbb{R}^{m\times d}.
\end{align*}
Then, applying Proposition \ref{prop_van_Trees_abstract} with the choices $\xi_j=\sum_{k=1}^dx_{jk}L_k$ $x_j=(x_{jk})_{1\leq k\leq d}\in\mathbb{R}^d$, $1\leq j\leq m$, the right-hand side of the lower bound can be written as
\begin{align*}
\frac{\big(\sum_{j=1}^me_j^TM_{d\psi,\pi}x_j\big)^2}{\sum_{j=1}^m x_j^T(M_{\mathcal{I}}+M_{\mathcal{I}_\pi})x_j},
\end{align*}
where $e_1,\dots,e_m$ denotes the standard basis in $\mathbb{R}^m$.
Optimizing in the $x_j$ leads to the choices $x_j=(M_{\mathcal{I}}+M_{\mathcal{I}_\pi})^{-1}M_{d\psi,\pi}^Te_j$ and to the lower bound
\begin{align*}
\int_G \mathbb{E}_g\|\hat\psi-\psi(g)\|_{2}^2\,\pi(g)dg\geq \operatorname{tr}\big(M_{d\psi,\pi}(M_{\mathcal{I}}+M_{\mathcal{I}_\pi})^{-1}M_{d\psi,\pi}^T\big).
\end{align*}
This provides a representation of our equivariant van Trees inequality that is closer to the classical Cramér-Rao inequality briefly discussed before Proposition \ref{prop_bayesian_chapman_robbins_ineq}. In Section \ref{sec_general_bound_eigenspaces}, we will use the above optimization strategy in the case of estimating the eigenspaces using the (diagonalizing) basis $L^{(ij)}$ (cf. Lemma \ref{lem_fisher_information}). The main remaining difficulty will be to construct an optimal prior density. 

\subsection{Two simple applications}
In this section, we provide two simple applications of the Bayesian Chapman-Robbins inequality and the van Trees inequality for equivariant models.

\subsubsection{Linear functionals of principal components}\label{sec_simple_example}
Let us first illustrate Proposition \ref{prop_van_Trees_abstract} in the simple case of functionals (i.e., one-dimensional derived parameters). This case can also be treated with the classical one-dimensional van Trees inequality (mainly due to the existence of simple one-parameter subgroups of the form $G=\{\exp(t\xi):t\in\mathbb{R}\}$, $\xi\in \mathfrak{so}(p)$). The full strength of our approach becomes apparent in the next section by considering high-dimensional eigenprojections.

Consider the statistical model $\{\mathbb{P}_U:U\in SO(p)\}$ from \eqref{eq_stat_experiment_version} with $\mathbb{P}_U=\mathcal{N}(0,U\Lambda U^T)^{\otimes n}$, $\Lambda=\operatorname{diag}(\lambda_1,\dots,\lambda_p)$ and $\lambda_1\geq  \cdots \geq \lambda_p> 0$. For simplicity let us consider the problem of estimating linear functionals of the principal components (cf.~\cite{MR4065170}), the general case can be treated similarly. More precisely, for $1\leq i\leq p$ and a fixed and known $\alpha\in \mathbb{R}^p$, the parameter of interest is $\psi(U)=\langle u_i,\alpha\rangle$, $U\in SO(p)$, with $u_i=Ue_i$ being the $i$-th column of $U$. To obtain non-trivial bounds we assume that $\min(\lambda_{i-1}-\lambda_{i},\lambda_i-\lambda_{i+1})>0$. Moreover, since eigenvectors can only be estimated up to a sign in general, we will consider parameter classes for which the sign of $u_i$ is uniquely determined. 

For a fixed $V\in SO(p)$, a subgroup of the form $G=\{\exp(t\xi):t\in\mathbb{R}\}$ with $\xi\in \mathfrak{so}(p)$ and a probability density function $\pi$ on $G$ with respect to the Haar measure $dg$ on $G$, we consider the Bayes risk
\begin{align*}
R^*_{V,\xi,\pi}(\alpha)=\inf_{\hat \psi}\int_{G}\mathbb{E}_{Vg}(\hat\psi-\langle Vge_i,\alpha\rangle)^2\,\pi(g)dg,
\end{align*}
where the infimum is taken over all estimators $\hat \psi=\hat \psi(X_1,\dots, X_n)$ with values in $\mathbb{R}$.
Since $R^*_{V,\xi,\pi}(\alpha)=R^*_{I_p,\xi,\pi}(V^T\alpha)$, let us focus on the case $V=I_p$. Moreover, since we consider the $i$-th eigenvector it turns out to be sufficient to consider
\begin{align*}
\xi=\sum_{j\neq i}x_jL^{(ij)},\qquad  \sum_{j\neq i}x_j^2=1.
\end{align*}
The normalization ensures that $\exp((t+2\pi)\xi)=\exp(t\xi)$. In particular, in this parametrization, the Haar measure on $G$ is given by $\mathbf{1}_{[-\pi,\pi]}(t)dt/(2\pi)$ and it is shown in Appendix \ref{sec_appendix} that
\begin{align}\label{eq_matrix_exp}
\exp(t\xi)_{ii}=\cos (t),\quad \text{while}\quad \exp(t\xi)_{ji}=-x_{j}\sin (t)\quad \forall j\neq i,
\end{align}  
meaning that $\langle\exp(t\xi)_i,\alpha\rangle=\alpha_i\cos (t)-\sum_{j\neq i}\alpha_jx_j\sin (t)$.
In addition,  for $k\geq 1$, we choose 
\begin{align*}
\pi_k(\exp(t\xi))=4k\mathbf{1}_{[-\frac{\pi}{2k},\frac{\pi}{2k}]}(t)\cos^2 (kt).
\end{align*} 
By Example \ref{ex_PCA_equivariant} and Lemma \ref{LemChi2Fisher}, we know that the  statistical model in \eqref{eq_stat_experiment_version} satisfies Assumptions~\ref{ass_equivariant} and \ref{ass_chi_square}. Hence, applying Proposition \ref{prop_van_Trees_abstract} with the above choices, using also that the directional derivatives coincide with the usual derivative with respect to $t$, we get
\begin{align*}
&R^*_{I_p,\xi,\pi}(\alpha)=\inf_{\hat \psi}\int_{-\frac{\pi}{2k}}^{\frac{\pi}{2k}}\mathbb{E}_{\exp(t\xi)}(\hat\psi-\langle\exp(t\xi)_i,\alpha\rangle)^2\,4k\cos^2 (kt)\frac{dt}{2\pi}\\
&\geq \frac{\big( \sum_{j\neq i}\alpha_j x_j\int_{-\frac{\pi}{2k}}^{\frac{\pi}{2k}}4k\cos^2 (kt)\cos (t)\, \frac{dt}{2\pi}\big)^2}{\sum_{j\neq i} x_j^2\Big(\frac{n(\lambda_i-\lambda_j)^2}{\lambda_i\lambda_j}+\int_{-\frac{\pi}{2k}}^{\frac{\pi}{2k}}4k^3\sin^2 (kt)\,\frac{dt}{2\pi}\Big)}\geq \frac{\big( \cos(\pi/(2k))\sum_{j\neq i}\alpha_j x_j\big)^2}{\sum_{j\neq i} x_j^2\Big(\frac{n(\lambda_i-\lambda_j)^2}{\lambda_i\lambda_j}+k^2\Big)}.
\end{align*}
Optimizing in the $x_j$ we conclude that
\begin{align*}
R^*_{I_p,\xi,\pi_k}(\alpha)\geq\cos^2(\pi/(2k))\sum_{j\neq i}\alpha_j^2\Big(\frac{n(\lambda_i-\lambda_j)^2}{\lambda_i\lambda_j}+k^2\Big)^{-1}.
\end{align*}
In particular, if we choose $k_n=(\pi/2)(\sqrt{n}/c)$ with $c>0$ and let $n\rightarrow\infty$, then we get
\begin{align}\label{eq_Bayes_one_dimensional}
\liminf_{n\rightarrow\infty}n\cdot R^*_{I_p,\xi,\pi_{k_n}}(\alpha)\geq \sum_{j\neq i}\alpha_j^2\Big(\frac{(\lambda_i-\lambda_j)^2}{\lambda_i\lambda_j}+\frac{\pi^2}{4c^2}\Big)^{-1}.
\end{align} 
We thus obtain a slightly more precise version of a similar result derived in \cite{MR4065170}. One  advantage of our approach is that we directly perturb the eigenspaces, leading to lower bounds that are already quite precise for finite samples (in contrast, \cite{MR4065170,K17} consider additive perturbations of the form $\Sigma+tH$, $t\in(-\delta_n,\delta_n)$).
Clearly, \eqref{eq_Bayes_one_dimensional} also gives a lower bound for the minimax risk over the support of $\pi_k$. Let us show that \eqref{eq_Bayes_one_dimensional} implies a local asymptotic minimax theorem (cf. \cite[Theorem 8.11]{MR1652247}). Setting
\begin{align*}
\Theta_{\delta}=\Theta_{\delta}(\Lambda,\xi)=\{\Sigma=\exp(t\xi)\Lambda \exp(-t\xi):t\in (-\delta,\delta)\},
\end{align*}
it follows from \eqref{eq_Bayes_one_dimensional}, using the fact that the prior $\pi_{k_n}$ has support $[-c/\sqrt{n},c/\sqrt{n}]$,
\begin{align*}
\lim_{c\rightarrow\infty}\liminf_{n\rightarrow\infty}\inf_{\hat \psi}\sup_{\Sigma\in\Theta_{c/\sqrt{n}}}n\mathbb{E}_{\Sigma}(\hat\psi-\langle u_i(\Sigma),\alpha\rangle)^2\geq \sum_{j\neq i}\alpha_j^2 \frac{\lambda_i\lambda_j}{(\lambda_i-\lambda_j)^2},
\end{align*}
where we translated \eqref{eq_Bayes_one_dimensional}  to the notation of Section \ref{sec_main_result}. See \cite{MR4065170} for a matching upper bound. 

\subsubsection{Nonparametric density estimation}
The main focus of the equivariant Chapman-Robbins inequality in Proposition \ref{prop_bayesian_chapman_robbins_ineq} is on statistical models with group valued parameters. Yet, it is also possible to apply Proposition \ref{prop_bayesian_chapman_robbins_ineq} to statistical models not directly related to groups. In this section, we illustrate this in the simple case of density estimation. For simplicity, we only explain how Proposition \ref{prop_bayesian_chapman_robbins_ineq} implies the standard non-parametric $n^{-\beta/(2\beta+1)}$-rate for the pointwise risk over Hölder balls of smoothness $\beta>0$, but similar considerations (mentioned briefly at the end of this section) also yield corresponding results for the $L^2$ risk. For $g\in \{\pm 1\}$, we consider
\begin{align*}
f_g(x)=\frac{1}{2}+c_0h^{\beta}g\Big\{K\Big(\frac{x-1/2}{h}\Big)-K\Big(\frac{x+1/2}{h}\Big)\Big\},\qquad x\in[-1,1],
\end{align*}
where $h\in(0,1]$, $\beta>0$, $K$ is a symmetric and continuous probability density function with respect to the Lebesgue measure with support in $[-1/2,1/2]$ and $K(0)>0$, and $c_0>0$ is a sufficiently small constant such that $f_g(x)\geq 1/4$ for all $x\in[-1,1]$, $h\in(0,1]$ and $g\in\{\pm 1\}$. This provides a slight variation of a standard lower bound construction (see, e.g., \cite{MR1652247,MR2724359}). By construction, $f_{1}$ and $f_{-1}$ are probability densities on $[-1,1]$. Moreover, it is possible to choose the kernel such that $f_{1}$ and $f_{-1}$  are contained in a Hölder ball on $[-1,1]$ of smoothness $\beta$ for all $h\in(0,1]$ (see \cite[Chapter 2.5]{MR2724359} and \cite[Chapter 24]{MR1652247}).

We now consider the statistical model $([-1,1]^n,\mathscr{B}_{[-1,1]}^{\otimes n},(\mathbb{P}_g)_{g\in \{\pm 1\}})$ with $\mathbb{P}_g$ being the probability measure associated with a sample of $n$ independent random variables $X_1,\dots,X_n$ when the density of $X_i$ is $f_{g}$. By construction, we have $f_g(-x)=f_{-g}(x)$ for all $x\in[-1,1]$ and $g\in\{\pm 1\}$, where we used the symmetry of $K$. Hence, letting the group $G=\{\pm 1\}$ act on $[-1,1]^n$ coordinate-wise by multiplication (sign change), the transformation formula implies that for the random variable $X_i$ with density $f_g$, the random variable $-X_i$ has density $f_{-g}$, meaning that this statistical model is indeed $G$-equivariant. In order to apply Proposition~\ref{prop_bayesian_chapman_robbins_ineq}, we choose $h_1=-1$ and the prior $\Pi$ given by $\Pi(1)=1-\Pi(-1)=q\in(0,1)$. With these choices, we have
\begin{align}\label{eq_chi_square_comp_density}
\chi^2(\Pi\circ R_{-1},\Pi)=\frac{(1-2q)^2}{q(1-q)},\qquad \chi^2(\mathbb{P}_{-1},\mathbb{P}_1)\leq e^{2^5c_0^2h^{2\beta+1}n\|K\|_{L^2}^2},
\end{align} 
see Appendix \ref{sec_appendix} for the standard calculations. Now, considering the derived parameter $\psi(g)=f_g(1/2)$, $g\in\{\pm 1\}$, we have
\begin{align*}
\sum_{g\in\{\pm 1\}}(\psi(-g)-\psi(g))\Pi(g)&=\sum_{g\in\{\pm 1\}}(f_{-g}(1/2)-f_g(1/2))\Pi(g)\\
&=-(q-(1-q))2h^\beta c_0K(0).
\end{align*}
Choosing $q=3/4$ (that is a non-uniform prior) and $h=n^{-1/(2\beta+1)}$, we obtain that
\begin{align*}
\inf_{\hat{f}}\Big\{\frac{1}{2}\mathbb{E}_1(\hat{T}-f_1(1/2))^2+\frac{1}{2}\mathbb{E}_{-1}(\hat{T}-f_{-1}(1/2))^2\Big\}\geq cn^{-\frac{2\beta}{2\beta+1}},
\end{align*}
where $c>0$ is an absolute constant and where the infimum is taken over all estimators $\hat T=\hat T(X_1,\dots,X_n)$ with values in $\mathbb{R}$. 

Finally, in order to deal with estimation in more than one point or the $L^2$ risk, one can consider for $g\in G=\{\pm 1\}^m$,
\begin{align*}
f_g(x)=\frac{1}{2}+c_0h^{\beta}\sum_{k=1}^mg_k\Big\{K\Big(\frac{x-x_k}{h}\Big)-K\Big(\frac{x+x_k}{h}\Big)\Big\},\qquad x\in[-1,1]
\end{align*}
with $x_k=(2k-1)/(2m)$, $k=1,\dots,m$, $h\in(0,1/m]$ and $K$ from above. The key observation is that the resulting family of probability measures $\{\mathbb{P}_g:g\in G\}$ can again be realized as an equivariant statistical model. For this, let $G$ act on $[-k/m,-(k-1)/m)\cup ((k-1)/m,k/m]$ by multiplication with $g_k$, $k=2,\dots,m$, and on $[-1/m,1/m]$ by multiplication with $g_1$. 

\section{Lower bounds for the estimation of eigenspaces}
\label{sec_proof_main_result}
%

\subsection{Invoking the van Trees-type inequality}\label{sec_general_bound_eigenspaces}
We first specialize Proposition \ref{prop_van_Trees_abstract_version} to the case of eigenspaces, that is to the case where
\begin{align*}
\psi(U)=P_{\II}(U)=\sum_{i\in\II}(Ue_i)(Ue_i)^T,\qquad U\in SO(p).
\end{align*} 
where $e_1,\dots,e_p$ denotes the standard basis in $\mathbb{R}^p$ and $\II\subseteq \{1,\dots,p\}$.
The following corollary applies Proposition \ref{prop_van_Trees_abstract_version} in the above setting and the choice $G=SO(p)$, and proposes a density for which the Fisher information of $\pi$ becomes tractable. 

\begin{corollary}\label{cor_lower_bound_SP_preliminary}
Let $(\mathcal{X},\mathcal{F},(\mathbb{P}_U)_{U\in SO(p)})$ be a statistical model such that Assumptions \ref{ass_equivariant} and \ref{ass_chi_square} are satisfied. Let $\tilde\pi:[-p,p]\rightarrow [0,\infty)$ be continuously differentiable with $\int_{SO(p)}\tilde\pi(\operatorname{tr} (U))\,dU=1$. Then, for any estimator $\hat P=\hat P(X_1,\dots,X_n)$ with values in $\mathbb{R}^{p\times p}$, we have
\begin{align*}
&\int_{SO(p)} \mathbb{E}_{U}\|\hat P-P_{\II}(U)\|_{\operatorname{HS}}^2\,\tilde\pi(\operatorname{tr} (U))dU\\
&\geq  2\Big(\int_{SO(p)}(U_{11}U_{22}+U_{12}U_{21})\tilde\pi(\operatorname{tr} (U))\,dU\Big)^2\\
&\ \ \ \times\sum_{i\in \II}\sum_{j\notin\II}\Big(\mathcal{I}_{I_p}(L^{(ij)},L^{(ij)})+\int_{SO(p)} (U_{12}-U_{21})^2\frac{(\tilde\pi'(\operatorname{tr} (U)))^2}{\tilde\pi(\operatorname{tr} (U))}\,dU\Big)^{-1}.
\end{align*} 
In particular, if $\tilde\pi=\tilde\pi_h$ is given by $\tilde\pi_h(\cdot)=\exp(hp\cdot)/Z_h$ with $h>0$ and normalizing constant $Z_{h}=\int_{SO(p)}\exp(hp\operatorname{tr}(U))\,dU$, then we have 
\begin{align*}
&\int_{SO(p)} \mathbb{E}_{U}\|\hat P-P_{\II}(U)\|_{\operatorname{HS}}^2\,\tilde\pi_h(\operatorname{tr} (U))dU\\
&\geq  2\Big(\int_{SO(p)}(U_{11}U_{22}+U_{12}U_{21})\tilde\pi_h(\operatorname{tr} (U))\,dU\Big)^2\\
&\ \ \ \times\sum_{i\in \II}\sum_{j\notin \II}\Big(\mathcal{I}_{I_p}(L^{(ij)},L^{(ij)})+8h^2p\Big)^{-1}.
\end{align*} 
\end{corollary}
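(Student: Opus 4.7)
The plan is to reduce the matrix estimation problem to a collection of scalar sub-problems indexed by pairs $(i,j)$ with $i \in \II$ and $j \notin \II$, and then to apply Proposition~\ref{prop_van_Trees_abstract_version} to each with the perturbation direction $\xi = L^{(ij)}$. The fact that $P_\II(U)$ is symmetric is crucial: it supplies the factor $2$ in the bound and restricts the informative off-diagonal entries to those positions with exactly one index in $\II$.

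\textbf{Symmetrization.} Since $P_\II(U)$ is symmetric, $\|\cdot\|_{\operatorname{HS}}^2$ is convex, and $\|A^T\|_{\operatorname{HS}} = \|A\|_{\operatorname{HS}}$, replacing $\hat P$ by $(\hat P + \hat P^T)/2$ does not increase the Bayes risk, so one may assume $\hat P = \hat P^T$. Dropping all squared entries in $\|\hat P - P_\II(U)\|_{\operatorname{HS}}^2$ except those at positions with exactly one index in $\II$ then gives
\[
\|\hat P - P_\II(U)\|_{\operatorname{HS}}^2 \geq 2\sum_{i \in \II,\, j \notin \II} (\hat P_{ij} - P_\II(U)_{ij})^2,
\]
the factor $2$ coming from each unordered pair contributing both the $(i,j)$ and $(j,i)$ entries. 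It therefore suffices to lower bound each scalar Bayes risk $\int_{SO(p)} \mathbb{E}_U (\hat P_{ij} - P_\II(U)_{ij})^2\,\tilde\pi(\operatorname{tr} U)\,dU$ separately.

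\textbf{Per-pair van Trees.} Fix $i \in \II$, $j \notin \II$. Writing $P_\II(U) = UP_0 U^T$ with $P_0 = \sum_{k \in \II} e_k e_k^T$ and using $\xi^T = -\xi$, the directional derivative along $U\xi$ equals $dP_\II(U)\cdot U\xi = U(\xi P_0 - P_0 \xi) U^T$. For $\xi = L^{(ij)}$ the bracket collapses to $-(e_i e_j^T + e_j e_i^T)$, giving
\[
dP_\II(U)_{ij} \cdot UL^{(ij)} = -(U_{ii}U_{jj} + U_{ij}U_{ji}),
\]
which is uniformly bounded by $2$, while $\operatorname{tr}(UL^{(ij)}) = U_{ji} - U_{ij}$. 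Applying Proposition~\ref{prop_van_Trees_abstract_version} with $m=1$, $\psi(U) = P_\II(U)_{ij}$, and this $\xi$ then yields
\[
\int_{SO(p)} \mathbb{E}_U (\hat P_{ij} - P_\II(U)_{ij})^2 \tilde\pi\,dU \geq \frac{\bigl(\int (U_{ii}U_{jj} + U_{ij}U_{ji})\tilde\pi\,dU\bigr)^2}{\mathcal{I}_{I_p}(L^{(ij)}, L^{(ij)}) + \int (U_{ij} - U_{ji})^2 \tfrac{(\tilde\pi'(\operatorname{tr} U))^2}{\tilde\pi(\operatorname{tr} U)}\,dU}.
\]
Invariance of the Haar measure and of $\operatorname{tr}(U)$ under conjugation $U \mapsto \sigma U \sigma^T$ by permutation matrices $\sigma \in SO(p)$ (together with $U \mapsto U^T$ to handle small $p$) identifies both integrands with their $(i,j) = (1,2)$ counterparts. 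Multiplying by $2$ and summing over $(i,j)$ produces the first bound of the corollary.

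\textbf{Specialization to $\tilde\pi_h$.} For $\tilde\pi_h(x) = e^{hpx}/Z_h$ one has $(\tilde\pi_h')^2/\tilde\pi_h = h^2 p^2 \tilde\pi_h$, so the prior contribution to the denominator equals $h^2 p^2\,\mathbb{E}_{\tilde\pi_h}[(U_{12} - U_{21})^2]$. Cauchy--Schwarz together with $\mathbb{E}[U_{12}^2] = \mathbb{E}[U_{21}^2]$ gives $\mathbb{E}_{\tilde\pi_h}[(U_{12} - U_{21})^2] \leq 4\,\mathbb{E}_{\tilde\pi_h}[U_{12}^2]$. Since $\|U\|_{\operatorname{HS}}^2 = p$ almost surely and permutation invariance of $\tilde\pi_h$ yields $p\,\mathbb{E}[U_{11}^2] + p(p-1)\,\mathbb{E}[U_{12}^2] = p$, one deduces $\mathbb{E}[U_{12}^2] \leq 1/(p-1) \leq 2/p$ for $p \geq 2$, so the prior contribution is at most $8 h^2 p$, as claimed. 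The conceptual heart of the argument is the symmetrization step that exposes the informative off-diagonal block and brings out the factor $2$; everything else is a routine application of Proposition~\ref{prop_van_Trees_abstract_version} combined with standard Haar-measure symmetries.
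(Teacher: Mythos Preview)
Your proof is correct, and it reaches the same bound by a slightly different route than the paper. The paper applies the full multidimensional version of Proposition~\ref{prop_van_Trees_abstract_version} once, with $\psi=P_\II$ taking values in $\mathbb{R}^{p\times p}$ and directions $\xi^{(jk)}=c_{jk}L^{(jk)}$, $\xi^{(kj)}=c_{jk}L^{(jk)}$ for $j\in\II$, $k\notin\II$ (zero otherwise); this produces a single ratio with $(\sum_{j,k}c_{jk})^2$ in the numerator and $\sum_{j,k}c_{jk}^2(\cdots)$ in the denominator, and the sum-of-inverses form emerges only after optimizing over the weights $c_{jk}$. You instead symmetrize $\hat P$, drop to the off-diagonal block, and apply the one-dimensional case of Proposition~\ref{prop_van_Trees_abstract_version} separately to each entry $(i,j)$, which yields the sum of inverses directly without any optimization. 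The underlying computations (the derivative $dP_\II(U)UL^{(ij)}=-U(e_ie_j^T+e_je_i^T)U^T$, the permutation symmetry reducing to indices $(1,2)$, and the bound on the prior Fisher term for $\tilde\pi_h$) are identical in both arguments.

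Two minor remarks. First, conjugation $U\mapsto P_\sigma U P_\sigma^{-1}$ by \emph{any} permutation matrix $P_\sigma\in O(p)$ preserves both $SO(p)$ and its Haar measure (it is a continuous group automorphism, hence preserves the unique invariant probability), so there is no need to restrict to $\sigma\in SO(p)$ or invoke $U\mapsto U^T$ as a workaround; this is how the paper's Lemma~\ref{LemHaarProp} proceeds. Second, the step $\mathbb{E}[(U_{12}-U_{21})^2]\le 4\,\mathbb{E}[U_{12}^2]$ is just $(a-b)^2\le 2(a^2+b^2)$ together with $\mathbb{E}[U_{12}^2]=\mathbb{E}[U_{21}^2]$, not Cauchy--Schwarz. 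Neither point affects correctness.
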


\begin{remark}
Obviously, the two terms involving the prior density are competing: if the prior $\tilde\pi$  approximates the Dirac measure at $I_p$, then the prefactor tends to one, but the Fisher information of the prior explodes at the same time. On the other hand, if we choose $\tilde\pi\equiv 1$, then the Fisher information of the prior is zero, but the average in the prefactor as well. 
\end{remark}


\begin{proof}[Proof of Corollary \ref{cor_lower_bound_SP_preliminary}]
We apply Proposition \ref{prop_van_Trees_abstract_version} with $G=SO(p)$ and $\psi=P_{\mathcal{I}}$. We start by checking that $P_{\II}:SO(p)\rightarrow \mathbb{R}^{p\times p}$, $P_{\II}(U)=\sum_{i\in\II}(Ue_i)(Ue_i)^T$ satisfies the boundedness assumption in Proposition \ref{prop_van_Trees_abstract_version}. By definition, for $U\in SO(p)$ and $\xi\in \mathfrak{so}(p)$, we have $dP_{\II}(U)U\xi=f'(0)$ with $f:\mathbb{R}\rightarrow \mathbb{R}^{p\times p},t\mapsto \sum_{i\in \II}(U\exp(t\xi)e_i)(U\exp(t\xi)e_i)^T$. Hence,
\begin{align}\label{eq_derivative_SP}
dP_{\II}(U)U\xi=U\Big(\xi\sum_{i\in \II}e_ie_i^{T}-\sum_{i\in \II}e_ie_i^{T}\xi\Big)U^T
\end{align}
and, using that the operator norm of an orthogonal matrix and an orthogonal projection is bounded by $1$, we get
\begin{align*}
\sup_{U\in SO(p)}\|dP_{\II}(U)U\xi\|_{\operatorname{HS}}\leq 2\|\xi\|_{\operatorname{HS}}.
\end{align*}
Hence, the assumptions of Proposition \ref{prop_van_Trees_abstract_version} are satisfied for the choices
\begin{align*}
 \xi^{(jk)}=c_{jk}L^{(jk)}\quad\text{and}\quad
 \xi^{(kj)}=c_{jk}L^{(jk)}\quad \text{for }j\in \II,k\notin \II
\end{align*}
and $\xi^{(jk)}=0$ in all other cases, with real numbers $c_{jk}$ to be chosen later. Then it follows from \eqref{eq_derivative_SP} that 
\begin{align*}
 d(P_{\II})_{jk}(U)U\xi^{(jk)}&=-c_{jk}e_j^TU(e_ke_j^T+e_je_k^T)U^Te_k=-c_{jk}(U_{jj}U_{kk}+U_{jk}U_{kj}),\\
 d(P_{\II})_{kj}(U)U\xi^{(kj)}&=-c_{jk}e_k^TU(e_ke_j^T+e_je_k^T)U^Te_j=-c_{jk}(U_{jj}U_{kk}+U_{jk}U_{kj}),
\end{align*}
for $j\in \II,k\notin \II$. Moreover, we have $\operatorname{tr}^2(U\xi^{(kj)})=\operatorname{tr}^2(U\xi^{(jk)})=c_{jk}^2(U_{jk}-U_{kj})^2$. Hence, Proposition~\ref{prop_van_Trees_abstract_version} and the fact that $\mathcal{I}_{I_p}$ is a bilinear form yield
\begin{align*}
&\int_{SO(p)} \mathbb{E}_{U}\|\hat P-P_{\II}(U)\|_{\operatorname{HS}}^2\,\tilde\pi(\operatorname{tr} (U))dU\\
&\geq 
\frac{2\Big(\sum_{j\in \II}\sum_{k\notin \II}c_{jk}\int_{SO(p)}(U_{jj}U_{kk}+U_{jk}U_{kj})\tilde\pi(\operatorname{tr} (U))\,dU\Big)^2}{\sum_{j\in \II}\sum_{k\notin\II}c_{jk}^2\Big(\mathcal{I}_{I_p}(L^{(jk)},L^{(jk)})+\int_{SO(p)}(U_{jk}-U_{kj})^2\frac{\tilde\pi'(\operatorname{tr}(U))^2}{\tilde\pi(\operatorname{tr}(U))}\,dU\Big)}\\
&=
\frac{2\Big(\sum_{j\in \II}\sum_{k\notin\II}c_{jk}\Big)^2\Big(\int_{SO(p)}(U_{11}U_{22}+U_{12}U_{21})\tilde\pi(\operatorname{tr} (U))\,dU\Big)^2}{\sum_{j\in \II}\sum_{k\notin\II}c_{jk}^2\Big(\mathcal{I}_{I_p}(L^{(jk)},L^{(jk)})+\int_{SO(p)} (U_{12}-U_{21})^2\frac{(\tilde\pi'(\operatorname{tr} (U)))^2}{\tilde\pi(\operatorname{tr} (U))}\,dU\Big)},
\end{align*} 
where the equality follows from Lemma \ref{LemHaarProp} below. The lower bound now follows from optimizing  in the $c_{jk}$, leading to the choices
\[
c_{jk}=\Big(\mathcal{I}_{I_p}(L^{(jk)},L^{(jk)})+\int_{SO(p)} (U_{12}-U_{21})^2\frac{(\tilde\pi'(\operatorname{tr} (U)))^2}{\tilde\pi(\operatorname{tr} (U))}\,dU\Big)^{-1}.
\]
In special case that $\tilde\pi=\tilde\pi_h$ is given by $\tilde\pi_h(s)=\exp(hps)/Z_h$ with $h>0$ and normalizing constant $Z_{h}$, we have the identity $(\tilde\pi_h'(\operatorname{tr} (U)))^2/\tilde\pi_h(\operatorname{tr} (U))=(hp)^2\tilde\pi_h(\operatorname{tr} (U))$. Hence, in this case, it holds that
\begin{align*}
\int_{SO(p)} (U_{12}-U_{21})^2\frac{(\tilde\pi_h'(\operatorname{tr} (U)))^2}{\tilde\pi_h(\operatorname{tr} (U))}\,dU=(hp)^2\int_{SO(p)} (U_{12}-U_{21})^2\tilde\pi_h(\operatorname{tr}(U))\,dU.
\end{align*}
Moreover, we have
\begin{align*}
\int_{SO(p)} &(U_{12}-U_{21})^2\tilde\pi_h(\operatorname{tr}(U))\,dU\leq 2\int_{SO(p)} (U_{12}^2+U_{21}^2)\tilde\pi_h(\operatorname{tr}(U))\,dU\\
&=\frac{2}{p-1}\int_{SO(p)} \sum_{j=2}^p(U_{1j}^2+U_{j1}^2)\tilde\pi_h(\operatorname{tr}(U))\,dU\leq \frac{4}{p-1}\leq \frac{8}{p},
\end{align*}
where the equality follows from Lemma \ref{LemHaarProp} below.
\end{proof}

\begin{lemma}\label{LemHaarProp}
The maps $i\mapsto \int U_{ii}\tilde\pi(\operatorname{tr}(U))\,dU$, $(i,j)\mapsto \int U_{ii}U_{jj}\tilde\pi(\operatorname{tr}(U))\,dU$, $(i,j)\mapsto \int U_{ij}U_{ji}\tilde\pi(\operatorname{tr}(U))\,dU$, and $(i,j)\mapsto \int(U_{ij}-U_{ji})^2\tilde\pi(\operatorname{tr}(U))\,dU$ are constant in $i,j=1,\dots,p$.  
\end{lemma}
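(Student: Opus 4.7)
The plan is to exploit conjugation invariance of the measure $\tilde\pi(\operatorname{tr}(U))\,dU$. Since the Haar measure on $SO(p)$ is bi-invariant and $\operatorname{tr}(VUV^T) = \operatorname{tr}(U)$ for every $V \in SO(p)$, the conjugation $U \mapsto VUV^T$ preserves this measure. Hence for every integrable $f$ and every $V \in SO(p)$,
\begin{equation*}
\int_{SO(p)} f(U)\,\tilde\pi(\operatorname{tr}(U))\,dU \;=\; \int_{SO(p)} f(VUV^T)\,\tilde\pi(\operatorname{tr}(U))\,dU,
\end{equation*}
so the proof reduces to exhibiting, for each desired reindexing, a suitable $V \in SO(p)$ that turns one integrand into the other.

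The natural candidates are signed permutation matrices. Given a permutation $\sigma$ of $\{1,\ldots,p\}$ and signs $\epsilon_k \in \{\pm 1\}$, I would set $V e_k = \epsilon_k e_{\sigma(k)}$; this $V$ is orthogonal with $\det V = \mathrm{sgn}(\sigma)\prod_k \epsilon_k$, and a short computation using $V^T e_{i'} = \epsilon_{\sigma^{-1}(i')} e_{\sigma^{-1}(i')}$ gives
\begin{equation*}
(VUV^T)_{i'j'} \;=\; \epsilon_{\sigma^{-1}(i')}\,\epsilon_{\sigma^{-1}(j')}\,U_{\sigma^{-1}(i')\sigma^{-1}(j')}.
\end{equation*}
To match $(i,j)$ with $(i',j')$, I would pick $\sigma$ with $\sigma(i)=i'$ and $\sigma(j)=j'$, so the formula specializes to $(VUV^T)_{i'j'} = \epsilon_i \epsilon_j U_{ij}$, and then adjust a single sign to enforce $\det V = +1$.

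Feeding this into the invariance identity handles all four claims simultaneously. Diagonal entries satisfy $(VUV^T)_{i'i'} = U_{ii}$ since the sign squares to one, which takes care of the first two maps. Off-diagonal entries pick up a factor $\epsilon_i\epsilon_j$, but each of the integrands $U_{i'j'}U_{j'i'}$ and $(U_{i'j'} - U_{j'i'})^2$ is quadratic in the $(i'j')$ and $(j'i')$ entries, so it only picks up $\epsilon_i^2\epsilon_j^2 = 1$ and equals the corresponding integral at $(i,j)$.

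The only subtlety I foresee is arranging $\det V = +1$ without disturbing the images $\sigma(i)$ and $\sigma(j)$: for $p \geq 3$ there is always a free index $k \notin \{i,j\}$ whose sign $\epsilon_k$ can be flipped to absorb the parity of $\sigma$, and for $p \leq 2$ the statements reduce either to a trivial relabeling or to the obvious symmetry of each integrand under $(i,j)\leftrightarrow(j,i)$. Beyond this sign bookkeeping, the proof is a one-line change of variables.
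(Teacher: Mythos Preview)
Your proof is correct and follows essentially the same route as the paper: both exploit that conjugation $U\mapsto VUV^T$ by (signed) permutation matrices preserves both $\operatorname{tr}(U)$ and the Haar measure on $SO(p)$, thereby permuting the indices of the integrand. The paper simply uses plain permutation matrices $P_\sigma\in O(p)$ (conjugation by any element of $O(p)$ is an automorphism of $SO(p)$ preserving Haar measure), so your sign bookkeeping to force $V\in SO(p)$ is unnecessary but harmless.
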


\begin{proof}
For a permutation $\sigma$ on $\{1,\dots,p\}$, let $P_\sigma=(e_{\sigma(1)},\dots,e_{\sigma(p)})^T\in O(p)$ be the associated permutation matrix. Then multiplication from the left with $P_\sigma$ permutes the rows by $\sigma$ and multiplication from the right permutes the columns by $\sigma^{-1}$. By the properties of the Haar measure on the orthogonal group applied to the transformation $U\mapsto P_{(1i)}UP_{(1i)}$, which sends $U_{ii}$ to $U_{11}$ and leaves the trace fixed, the first claim follows. The other claims follow similarly from the transformation $U\mapsto P_{(2j)}P_{(1i)}UP_{(1i)}P_{(2j)}$, $i\neq j$.
\end{proof}

%
\subsection{Designing optimal prior densities}\label{sec_design_optimal_densities}
In this section, we analyze the density function on $SO(p)$ defined by
\begin{align}\label{EqDefPriorDensity}
\pi_h(U)=\tilde\pi_h(\operatorname{tr}(U))=\frac{\exp(hp\operatorname{tr}(U))}{\int_{SO(p)}\exp(hp\operatorname{tr}(U))\,dU},\qquad U\in SO(p),
\end{align}
where $h>0$ is a real parameter.
It is clear that the resulting probability measure on $SO(p)$ approximates the Dirac measure at the identity matrix $I_p$ as $h\rightarrow \infty$. The following proposition gives a quantitative statement needed to deduce Theorem \ref{thm_lower_bound_Bayes_risk} from the second part of Corollary \ref{cor_lower_bound_SP_preliminary}.
\begin{proposition}\label{prop_exp_trace_density}
For each $\delta\in(0,1)$, there is a real number $h_\delta>0$ depending only on $\delta$ such that the density function given in \eqref{EqDefPriorDensity}  satisfies
\begin{align}\label{EqLDBound}
\int_{SO(p)}\operatorname{tr}(U)\tilde\pi_{h}(\operatorname{tr}(U))\,dU\geq (1-\delta)p\qquad\forall p\geq 2,h\geq h_\delta.
\end{align}
\end{proposition}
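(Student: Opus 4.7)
The plan is a Laplace-style concentration argument. Setting $Y(U):=p-\operatorname{tr}(U)$, the function $Y$ is nonnegative (bounded above by $2p$) and vanishes only at $I_p$; the claim \eqref{EqLDBound} is equivalent to $\mathbb{E}_{\pi_h}[Y]\le \delta p$, and since $\pi_h$ tilts exponentially against large $Y$, one expects $\pi_h$ to concentrate near $I_p$ as $h\to\infty$. My first step is a trivial split: for $\epsilon:=\delta/2$,
\[
\mathbb{E}_{\pi_h}[Y]\le \epsilon p + 2p\cdot \mathbb{P}_{\pi_h}\{Y>\epsilon p\},
\]
so it suffices to prove $\mathbb{P}_{\pi_h}\{Y>\epsilon p\}\le \delta/4$ for $h\ge h_\delta$.

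Next I would perform a change-of-measure estimate. Writing $\pi_h(U)\,dU = (e^{hp^2-hpY}/Z_h)\,dU$ and bounding the numerator of $\mathbb{P}_{\pi_h}\{Y>\epsilon p\}$ by $e^{-hp^2\epsilon}$ and the denominator from below by $e^{-hp^2\epsilon/2}\mu\{Y\le \epsilon p/2\}$ (where $\mu$ is the Haar measure) gives
\[
\mathbb{P}_{\pi_h}\{Y>\epsilon p\}\le \frac{e^{-hp^2\epsilon/2}}{\mu\{Y\le \epsilon p/2\}}.
\]
To describe the set in the denominator geometrically, I would parametrize $U=\exp(\xi)$ with $\xi\in\mathfrak{so}(p)$; the nonzero eigenvalues of $\xi$ come in pairs $\pm i\mu_k$, so
\[
Y(\exp(\xi)) = \sum_k 2(1-\cos\mu_k),\qquad \|\xi\|_{\operatorname{HS}}^2 = 2\sum_k \mu_k^2,
\]
and $1-\cos(x)\le x^2/2$ yields $Y(\exp(\xi))\le \|\xi\|_{\operatorname{HS}}^2/2$; in particular, the image under $\exp$ of the ball $\{\|\xi\|_{\operatorname{HS}}^2\le \epsilon p\}$ lies inside $\{Y\le \epsilon p/2\}$.

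The remaining and main ingredient is a lower bound of the form $\mu\{Y\le \epsilon p/2\}\ge \exp(-C(\epsilon)p^2)$ with $C(\epsilon)$ depending only on $\epsilon$. This is where the large-deviation techniques referenced in the introduction enter: at speed $p^2$, the empirical spectral measure of a Haar-distributed $U\in SO(p)$ satisfies an LDP on the unit circle with Voiculescu-type rate function (see \cite{MR1746976,MR3971582}), so any probability measure $\nu$ on the circle with $\int\operatorname{Re}(z)\,d\nu(z)\ge 1-\epsilon/4$ and finite rate would yield the desired estimate; a small mixture of $\delta_1$ with the uniform law does the job. A more elementary alternative is the Weyl integration formula: for $p=2m$, the eigen-angles $\theta_1,\ldots,\theta_m\in[0,\pi]$ carry joint density proportional to $\prod_{i<j}(\cos\theta_i-\cos\theta_j)^2$, and restricting each $\theta_i$ to a narrow interval near $0$ (of width of order $\sqrt{\epsilon/m}$) produces a subset of $\{Y\le \epsilon p/2\}$ whose Haar measure is a Selberg-type integral with logarithm of order $-C(\epsilon)m^2$.

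Combining the pieces,
\[
\mathbb{P}_{\pi_h}\{Y>\epsilon p\}\le \exp\!\bigl(-p^2(h\epsilon/2 - C(\epsilon))\bigr),
\]
which is at most $\delta/4$ once $h$ exceeds a threshold $h_\delta$ depending only on $\delta$ (using $p^2\ge 4$); combined with the initial Markov split this completes the proof. The main obstacle is unquestionably the uniform-in-$p$ lower bound $\mu\{Y\le \epsilon p/2\}\ge \exp(-C(\epsilon)p^2)$: getting the correct $p^2$-scaling requires genuine large-deviation input, either through the abstract LDP at speed $p^2$ or through an explicit Selberg-type volume computation, with careful tracking of Vandermonde normalization constants. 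Everything else is standard Laplace-method bookkeeping.
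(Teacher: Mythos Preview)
Your strategy is sound and ultimately rests on the same core computation as the paper: a lower bound of the form $\mu\{Y\le \epsilon p/2\}\ge \exp(-C(\epsilon)p^2)$ (equivalently, $Z_h\ge \exp(hp^2(1-\delta))$), obtained by restricting the Weyl integral to a small box of eigen-angles near zero and controlling the Vandermonde factor. The paper packages this differently: it lower-bounds the log-partition function $\psi(h)=\log Z_h$ directly via the Weyl formula and then uses the convexity inequality $\psi'(h)\ge \psi(h)/h$ (together with $\psi(0)=0$) to read off $\mathbb{E}_{\pi_h}[p\operatorname{tr}(U)]=\psi'(h)\ge p^2(1-\delta)$ in one stroke, bypassing both your Markov split and the separate tail estimate. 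Your Laplace/change-of-measure route reaches the same conclusion but is slightly longer; the convexity trick is the more economical wrapping of the same large-deviation input.

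Two caveats on your sketch of the key step. First, the proposed ``mixture of $\delta_1$ with the uniform law'' does not have finite rate: any atom forces the logarithmic energy $\Sigma(\mu)=-\infty$, hence infinite rate. You want an absolutely continuous measure supported on a short arc near $1$ (the paper, in the $y=\cos\theta$ coordinates, effectively takes the uniform law on $[1-\delta,1]$). Second, the LDP is an asymptotic statement in $p$ and does not by itself deliver the \emph{uniform} bound $\mu\{Y\le \epsilon p/2\}\ge e^{-C(\epsilon)p^2}$ for all $p\ge 2$ that your argument needs; your second alternative, the explicit Weyl/Selberg computation, is precisely what the paper carries out and is the cleaner way to secure uniformity.
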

%
%
While Proposition \ref{prop_exp_trace_density} can be deduced from Varadhan's lemma using that the Jacobi ensemble satisfies a large deviation principle, we present a more direct proof based on Weyl's integration formula in combination with some elementary large deviations arguments from \cite{MR2289756} (see also \cite{MR1746976} or \cite{MR2760897}). After that we explain how the claim can be alternatively obtained by more general large deviations arguments.
\begin{proof}[Proof of Proposition \ref{prop_exp_trace_density}]
Let us start with proving a lower bound for the exponential moment $\int_{SO(p)}\exp\big(hq\operatorname{tr} (U)\big)\,dU$.
We first consider the case $p=2m$ even. Then Weyl's integration formula (see, e.g., Theorem 3.5 in \cite{MR3971582}) in combination with the change of variables $y_i=\cos(\theta_i)$ gives
\begin{align*}
&\int_{SO(p)}\exp\big(hp\operatorname{tr} (U)\big)\,dU\nonumber\\
&=\frac{2^{m^2-m+1}}{m!(2\pi)^m}\int_{[-1,1]^m}e^{4hm\sum_{i=1}^my_i}\prod_{1\leq i<j\leq m}(y_j-y_i)^2\prod_{i=1}^m(1-y_i^2)^{-1/2}\prod_{i=1}^mdy_i.
\end{align*}
Now, let $0<\delta<1$ be a real number to be chosen later. Restricting the integral to the set
\begin{align*}
\Delta_m:=\Big\{y\in[-1,1]^m:1-\delta+\frac{\delta i}{m+1}\leq y_i\leq 1-\delta+\frac{\delta(i+1/2)}{m+1},1\leq i\leq m\Big\},
\end{align*}
we obtain
\begin{align}
&\int_{SO(p)}\exp\big(hp\operatorname{tr} (U)\big)\,dU\nonumber\\
&\geq\frac{2^{m^2-m+1}}{m!(2\pi)^m}\int_{\Delta_m}e^{4hm\sum_{i=1}^my_i}\prod_{1\leq i<j\leq m}(y_j-y_i)^2\prod_{i=1}^mdy_i\nonumber\\
&\geq\frac{2^{m^2-m+1}}{m!(2\pi)^m}\Big(\frac{\delta/2}{m+1}\Big)^me^{4hm^2(1-\delta/2)}\delta^{m^2-m}\prod_{1\leq i<j\leq m}\Big(\frac{j-i-1/2}{m+1}\Big)^2.\label{eq_LDP_technique}
\end{align}
Considering the last product term, we have the following bound
\begin{align*}
\prod_{1\leq i<j\leq m}\frac{j-i-1/2}{m+1}&=\prod_{k=1}^{m-1}\Big(\frac{k-1/2}{m+1}\Big)^{m-k}\\
&\geq \prod_{k=1}^{m}\Big(\frac{k}{m+1}\Big)^{m+1-k}\prod_{k=1}^{m}\Big(\frac{k-1/2}{k}\Big)^{m-k}.
\end{align*}
On the one hand, using that the function $t\mapsto (1-t)\log t$ is monotone increasing in $t\in(0,1]$ and equal to zero for $t=1$, we have
\begin{align*}
\frac{1}{(m+1)^2}\sum_{k=1}^m (m+1-k)\log\Big(\frac{k}{m+1}\Big)\geq \int_{0}^1(1-t)\log t\,dt =\frac{3}{4},
\end{align*}
and thus
\begin{align*}
\prod_{k=1}^{m}\Big(\frac{k}{m+1}\Big)^{m+1-k}\geq e^{-\frac{3(m+1)^2}{4}}.
\end{align*}
On the other hand, using the concavity of the function $t\mapsto \log t$, we have
\begin{align*}
\sum_{k=1}^{m} (m-k)(\log(k)-\log(k-1/2))\leq \sum_{k=1}^{m} \frac{m-k}{2k-1}\leq \sum_{k=1}^{m} \frac{m}{k}\leq m\log(em),
\end{align*}
and thus 
\begin{align*}
\prod_{k=1}^{m}\Big(\frac{k-1/2}{k}\Big)^{m-k}\geq e^{-m\log(em)}.
\end{align*}
Inserting these lower bounds into \eqref{eq_LDP_technique}, we get that the leading exponent with respect to $m$ is $m^2$. This allows us to conclude that there is a real number $h_\delta>0$ such that for every $h\geq h_\delta$ and every $p\geq 2$ even,
%
\begin{align}\label{EqLDBound1}
\int_{SO(p)}\exp(h p\operatorname{tr} (U))\,dU\geq \exp(h p^2(1-\delta)).
\end{align}
The same bound also holds for $p$ odd, in which case Weyl's integration formula has a slightly different form (see, e.g., Theorem 3.5 in \cite{MR3971582}). Next, using that the logarithmic moment-generating function 
\begin{align*}
\psi(h)=\log\Big(\int_{SO(p)}\exp(h p\operatorname{tr} (U))\,dU\Big),\qquad h\in\mathbb{R},
\end{align*}
is convex, we get
\begin{align*}
\int_{SO(p)}p\operatorname{tr}(U)\tilde\pi_{h}(\operatorname{tr}(U))\,dU=\psi'(h)\geq h^{-1}\psi(h)\geq p^2(1-\delta)
\end{align*}
for every $h\geq h_\delta$ and every $p\geq 2$, where we used convexity and $\psi(0)=0$ in the first inequality and \eqref{EqLDBound1} in the last inequality. Dividing both sides by $p$, the claim follows.
\end{proof} 

The following corollary deals with the prefactor from the second lower bound in Corollary \ref{cor_lower_bound_SP_preliminary}.

\begin{corollary}\label{cor_LD_result}
For each $\delta\in(0,1)$ there is a real number $h_\delta>0$ such that the density function given in \eqref{EqDefPriorDensity} satisfies, for every $p\geq 2$ and every $h\geq h_\delta$,
\begin{align*}
\int_{SO(p)}(U_{11}U_{22}+U_{12}U_{21})\tilde\pi_h(\operatorname{tr}(U))\,dU&\geq (1-\delta)^2-\frac{2(1-(1-\delta)^2)}{p-1}.
\end{align*}
\end{corollary}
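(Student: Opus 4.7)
The plan is to reduce to Proposition~\ref{prop_exp_trace_density} by exploiting the row orthogonality of elements of $SO(p)$ together with the symmetries provided by Lemma~\ref{LemHaarProp}. Throughout, write $\langle f\rangle$ for $\int_{SO(p)} f(U)\,\tilde\pi_h(\operatorname{tr}(U))\,dU$. By Proposition~\ref{prop_exp_trace_density} applied with the same $\delta$, we have $\langle \operatorname{tr}(U)\rangle \geq (1-\delta)p$ whenever $h\geq h_\delta$, and by Lemma~\ref{LemHaarProp} the constancy of $i\mapsto \langle U_{ii}\rangle$ gives $\langle U_{11}\rangle = \langle\operatorname{tr}(U)\rangle/p\geq 1-\delta$. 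Jensen's inequality then yields both $\langle U_{11}^2\rangle \geq (1-\delta)^2$ and $\langle (\operatorname{tr} U)^2\rangle \geq (1-\delta)^2 p^2$.

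Next, expanding $(\operatorname{tr} U)^2 = \sum_{i,j} U_{ii}U_{jj}$ and separating diagonal from off-diagonal terms via Lemma~\ref{LemHaarProp} gives $\langle(\operatorname{tr} U)^2\rangle = p\langle U_{11}^2\rangle + p(p-1)\langle U_{11}U_{22}\rangle$. Solving for $\langle U_{11}U_{22}\rangle$ and combining the Jensen lower bound on $\langle(\operatorname{tr} U)^2\rangle$ with the trivial estimate $\langle U_{11}^2\rangle\leq 1$ produces
\begin{align*}
\langle U_{11}U_{22}\rangle \geq \frac{(1-\delta)^2 p - 1}{p-1} = (1-\delta)^2 - \frac{1-(1-\delta)^2}{p-1}.
\end{align*}

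For the cross term, the pointwise identity $\sum_{j=1}^p U_{1j}^2 = 1$ combined with the fact that $(i,j)\mapsto \langle U_{ij}^2\rangle$ is constant for $i\neq j$ (which follows from Lemma~\ref{LemHaarProp} together with the transpose-invariance $\langle U_{ij}^2\rangle = \langle U_{ji}^2\rangle$, since $\tilde\pi_h$ depends on $U$ only through the trace) yields $\langle U_{12}^2\rangle = (1-\langle U_{11}^2\rangle)/(p-1)$. Applying the elementary bound $|U_{12}U_{21}|\leq (U_{12}^2+U_{21}^2)/2$, again with transpose-invariance and the lower bound $\langle U_{11}^2\rangle \geq (1-\delta)^2$, gives $\langle U_{12}U_{21}\rangle \geq -\langle U_{12}^2\rangle \geq -(1-(1-\delta)^2)/(p-1)$. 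Summing this with the bound on $\langle U_{11}U_{22}\rangle$ yields the corollary.

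The delicate point of the argument is that the two halves use the estimates $\langle U_{11}^2\rangle\leq 1$ and $\langle U_{11}^2\rangle\geq (1-\delta)^2$ in opposite directions; since no tighter two-sided control on $\langle U_{11}^2\rangle$ is available from Proposition~\ref{prop_exp_trace_density} alone, this mismatch is precisely what produces the factor of $2$ appearing in the second term of the stated bound. Everything else is routine bookkeeping.
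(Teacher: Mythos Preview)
Your proof is correct and follows essentially the same approach as the paper's: both arguments handle $\langle U_{11}U_{22}\rangle$ by expanding $\langle(\operatorname{tr} U)^2\rangle$ and using $\langle U_{11}^2\rangle\leq 1$, and both bound $|\langle U_{12}U_{21}\rangle|$ via the AM--GM inequality together with the row/column identities $\sum_k U_{1k}^2=\sum_k U_{k1}^2=1$ and $\langle U_{11}^2\rangle\geq(1-\delta)^2$. The only cosmetic difference is that the paper first averages $\langle U_{1k}U_{k1}\rangle$ over $k$ (using Lemma~\ref{LemHaarProp}) and then applies AM--GM pointwise, whereas you apply AM--GM directly to $U_{12}U_{21}$ and then invoke the constancy of $\langle U_{ij}^2\rangle$ for $i\neq j$ via Lemma~\ref{LemHaarProp} plus transpose-invariance---both routes arrive at the same bound.
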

\begin{proof}
Consider $h_\delta$ from Proposition \ref{prop_exp_trace_density} and let $h\geq h_\delta$. By the Cauchy-Schwarz inequality and Proposition \ref{prop_exp_trace_density}, we have
\[
\int_{SO(p)}(\operatorname{tr}(U))^2\tilde\pi_h(\operatorname{tr}(U))\,dU\geq \Big(\int_{SO(p)}\operatorname{tr}(U)\tilde\pi_h(\operatorname{tr}(U))\,dU\Big)^2\geq (1-\delta)^2p^2.
\]
On the other hand, by Lemma \ref{LemHaarProp}, we have
\begin{align*}
&\int_{SO(p)}(\operatorname{tr}(U))^2\tilde\pi_h(\operatorname{tr}(U))\,dU\\
&=\sum_{i=1}^p\int_{SO(p)}U_{ii}^2\tilde\pi_h(\operatorname{tr}(U))\,dU+\sum_{i\neq j}\int_{SO(p)}U_{ii}U_{jj}\tilde\pi_h(\operatorname{tr}(U))\,dU\\
&=p\int_{SO(p)}U_{11}^2\tilde\pi_h(\operatorname{tr}(U))\,dU+p(p-1)\int_{SO(p)}U_{11}U_{22}\tilde\pi_h(\operatorname{tr}(U))\,dU.
\end{align*}
Now, $\int U_{11}^2\tilde\pi_h(\operatorname{tr}(U))\,dU\leq 1$ and thus
\begin{align}\label{proof_cor_LD_result_part1}
\int_{SO(p)}U_{11}U_{22}\tilde\pi_h(\operatorname{tr}(U))\,dU\geq \frac{(1-\delta)^2p-1}{p-1}=(1-\delta)^2-\frac{1-(1-\delta)^2}{p-1}.
\end{align}
This gives the first part of the integral. For the second part, Lemma \ref{LemHaarProp} gives
\[
\int_{SO(p)}U_{12}U_{21}\tilde\pi_h(\operatorname{tr}(U))\,dU=\frac{1}{p-1}\sum_{k=2}^p\int_{SO(p)}U_{1k}U_{k1}\tilde\pi_h(\operatorname{tr}(U))\,dU.
\]
Hence
\begin{align}
\Big|\int_{SO(p)}U_{12}U_{21}\tilde\pi_h(\operatorname{tr}(U))\,dU\Big|&\leq \frac{1}{p-1}\sum_{k=2}^p\int_{SO(p)}|U_{1k}U_{k1}|\tilde\pi_h(\operatorname{tr}(U))\,dU \nonumber\\
&\leq \frac{1}{2(p-1)}\sum_{k=2}^p\int_{SO(p)}(U_{1k}^2+U_{k1}^2)\tilde\pi_h(\operatorname{tr}(U))\,dU \nonumber\\
&=\frac{1}{p-1}\int_{SO(p)}(1-U_{11}^2)\tilde\pi_h(\operatorname{tr}(U))\,dU.\label{eq_mixed_term}
\end{align}
By the Cauchy-Schwarz inequality, Lemma \ref{LemHaarProp} and Proposition \ref{prop_exp_trace_density}, we have 
\begin{align*}
\int_{SO(p)}U_{11}^2\tilde\pi_h(\operatorname{tr}(U))\,dU\geq \Big(\int_{SO(p)}U_{11}\tilde\pi_h(\operatorname{tr}(U))\,dU\Big)^2\geq (1-\delta)^2,
\end{align*}
and inserting this into \eqref{eq_mixed_term}, we get
\begin{align}\label{proof_cor_LD_result_part2}
\Big|\int_{SO(p)}U_{12}U_{21}\tilde\pi_h(\operatorname{tr}(U))\,dU\Big|&\leq \frac{1-(1-\delta)^2}{p-1}.
\end{align}
The claim now follows from combining \eqref{proof_cor_LD_result_part1} and \eqref{proof_cor_LD_result_part2}.
\end{proof}
We conclude this section by outlining a slightly different proof of Proposition \ref{prop_exp_trace_density}. Making the change of variables $x_j=(1+y_j)/2$ at the beginning of the above proof leads to
\begin{align}
&\int_{SO(p)}\exp\big(hp\operatorname{tr} (U)\big)\,dU=\int_{[0,1]^m}e^{8hm\sum_{i=1}^mx_i-4hm^2}d\nu_m(x_1,\dots,x_m)\label{EqWeylInt}
\end{align}
with probability measure $\nu_m$ on $[0,1]^m$ defined by
\begin{align*}
 d\nu_m(x_1,\dots,x_m)=\frac{2^{2m^2-2m+1}}{m!(2\pi)^m}\prod_{i=1}^m(x_i(1-x_i))^{-1/2}\prod_{1\leq i<j\leq m}(x_j-x_i)^2\prod_{i=1}^mdx_i.
\end{align*}
This measure corresponds to a Jacobi ensemble and we next state a large deviation principle proved in \cite{MR2289756}. For this let $\mathcal{M}_1([0,1])$ be the space of all probability measures on $[0,1]$ endowed with the usual weak topology (metrizable by the bounded Lipschitz metric), and let $\Sigma:\mathcal{M}_1([0,1])\rightarrow [-\infty,-2\log 2]$ be the noncommutative entropy define by 
\begin{align*}
\Sigma(\mu)=\int_0^1\int_0^1\log|x-y|\,d\mu(x)d\mu(y),\qquad\mu\in\mathcal{M}_1([0,1]).
\end{align*}
Assume now that the random vector $x^{(m)}$ is distributed according to the probability measure $\nu_m$ on $[0,1]^m$. Then it follows from Proposition 2.1 in \cite{MR2289756} (resp.~a slight variation of it with $\kappa(N)$ and $\lambda(N)$ allowed to be negative such that $\kappa(N)/N,\lambda(N)/N\rightarrow 0$) that the empirical measure 
\[
\frac{1}{m}\delta_{x_1^{(m)}}+\dots+\frac{1}{m}\delta_{x_m^{(m)}}
\] satisfies the large deviation principle with speed $m^2$ and (good) rate function $\mathcal{I}(\mu)=-\Sigma(\mu)-2\log 2$, $\mu\in \mathcal{M}_1([0,1])$. We now apply Varadhan's integral lemma to the function $f(\mu)=8h\int_0^1y\,d\mu(y)$. This function is indeed continuous and bounded and it follows from \cite[Theorem 27.10(i)]{MR1876169} or \cite[Theorem 4.3.1]{MR2571413} that for every $h>0$,
\begin{align}\label{eq_question}
\lim_{m\rightarrow\infty}\frac{1}{m^2}\log\mathbb{E}\exp\Big(8hm\sum_{i=1}^mx_i^{(m)}\Big)=\sup_{\mu\in \mathcal{M}_1([0,1])}\Big(8h\int_0^1y\,d\mu(y)-\mathcal{I}(\mu)\Big).
\end{align}
From this point we can argue as above. Note that the above proof corresponds to choosing $\mu$ as the uniform measure on $[1-\delta/2,1]$.
Note that it is possible to show that the right-hand side of \eqref{eq_question} is equal to $4h+2h^2$ if $2h\leq 1$ and equal to $4h-\log(2h)+4h-3/2$ if $2h>1$. This can be achieved by relating the right-hand side of \eqref{eq_question} to the rate function in a large deviation result for the Gross-Witten-Wadia model \cite[p. 82]{MR1743092} and \cite[Proposition 5.3.10]{MR1746976}. We omit here the details. Translating this back to the left-hand side of \eqref{EqWeylInt}, we get
\begin{align*}
 \lim_{p\rightarrow \infty}\frac{1}{p^2}\log\Big(\int_{SO(p)}\exp\big(hp\operatorname{tr} (U)\big)\,dU\Big)\rightarrow \begin{cases}\frac{h^2}{2},&\quad 2h\leq 1,\\
 h-\frac{1}{4}\log(2h)-\frac{3}{8},&\quad 2h\geq 1.
 \end{cases}
\end{align*} 
In this paper the main focus is on nonasymptotic lower bounds in which case the latter exact limit has no special meaning. Yet, let us mention that in the asymptotic scenario $n,p\rightarrow\infty$ with $p/n\rightarrow \gamma\in(0,\infty)$, it allows to obtain more precise constants than presented in Corollary \ref{cor_LD_result} (although the result is not enough to exactly compute the asymptotic value of the integral in Corollary~\ref{cor_LD_result}). All this leads to the question to prove an (asymptotic) version of Corollary \ref{cor_scm} with constants as sharp as possible.

\subsection{Application: principal component analysis}\label{proof_thm_lower_bound_Bayes_risk}
\begin{proof}[End of proof of Theorem \ref{thm_lower_bound_Bayes_risk}]
Consider the statistical model $\{\mathbb{P}_U:U\in SO(p)\}$ from \eqref{eq_stat_experiment_version} with $\mathbb{P}_U=\mathcal{N}(0,U\Lambda U^T)^{\otimes n}$, $\Lambda=\operatorname{diag}(\lambda_1,\dots,\lambda_p)$ and $\lambda_1\geq \dots\geq \lambda_p> 0$. By Example \ref{ex_PCA_equivariant} and Lemma \ref{LemChi2Fisher}, we know that Assumptions \ref{ass_equivariant} and~\ref{ass_chi_square} are satisfied with $\mathcal{I}_{I_p}(L^{(ij)},L^{(ij)})=n(\lambda_i-\lambda_j)^2/\lambda_i\lambda_j$. Hence, combining the second claim in Corollary \ref{cor_lower_bound_SP_preliminary} with Corollary \ref{cor_LD_result}, we conclude that for every $\delta\in(0,1)$, there is a constant $C_\delta>0$ depending only on $\delta$ such that 
\begin{align*}
&\inf_{\hat P}\int_{SO(p)}\mathbb{E}_{U}\|\hat P-P_{\II}(U)\|_{\operatorname{HS}}^2\,\tilde\pi_h(\operatorname{tr}(U))dU\\
&\geq  2(1-\delta)\sum_{i\in \II}\sum_{j\notin\II}\Big(\frac{n(\lambda_i-\lambda_j)^2}{\lambda_i\lambda_j}+8h^2p\Big)^{-1}\qquad\forall h\geq C_\delta.
\end{align*} 
Inserting the inequality 
\begin{align}\label{eq:transfer:-:min}
(x+y)^{-1}\geq (1-\delta)x^{-1}\wedge \delta y^{-1},\qquad x,y\geq 0,
\end{align}
the claim follows
\end{proof}

\subsection{Application: matrix denoising}
Consider the statistical model $\{\mathbb{P}_U:U\in SO(p)\}$ from \eqref{eq_stat_experiment_low_rank} with $\mathbb{P}_U$ being the distribution of $X=U\Lambda U^T+\sigma W$, where $\sigma>0$, $W$ is a GOE matrix, and $\Lambda=\operatorname{diag}(\lambda_1,\dots,\lambda_p)$ with $\lambda_1\geq \dots\geq \lambda_p\geq 0$. By Example \ref{ex_matrix_denoising_equivariant} and Lemma \ref{LemChi2Fisher_low_rank}, we know that Assumptions \ref{ass_equivariant} and \ref{ass_chi_square} are satisfied with $\mathcal{I}_{I_p}(L^{(ij)},L^{(ij)})=(\lambda_i-\lambda_j)^2/\sigma^2$. Hence, combining the second claim in Corollary \ref{cor_lower_bound_SP_preliminary} with Corollary \ref{cor_LD_result}, we conclude that for every $\delta\in(0,1)$, there is a constant $C_\delta>0$ depending only on $\delta$ such that  
\begin{align*}
&\inf_{\hat P}\int_{SO(p)}\mathbb{E}_{U}\|\hat P-P_{\II}(U)\|_{\operatorname{HS}}^2\,\tilde\pi_h(\operatorname{tr}(U))dU\\
&\geq  2(1-\delta)\sum_{i\in \II}\sum_{j\notin\II}\Big(\frac{(\lambda_i-\lambda_j)^2}{\sigma^2}+8h^2p\Big)^{-1}\qquad\forall h\geq C_\delta.
\end{align*} 
Applying \eqref{eq:transfer:-:min}, we get that
\begin{align*}
&\inf_{\hat P}\int_{SO(p)}\mathbb{E}_{U}\|\hat P-P_{\II}(U)\|_{\operatorname{HS}}^2\,\tilde\pi_h(\operatorname{tr}(U))dU\nonumber\\
&\geq  2(1-\delta)^2\sum_{i\in \II}\sum_{j\notin\II}\min\Big(\frac{\sigma^2}{(\lambda_i-\lambda_j)^2},\frac{\delta}{8h^2p}\Big)\qquad\forall h\geq C_\delta.
\end{align*}


\appendix
\section{Additional proofs}\label{sec_appendix}
\begin{proof}[Proof of Corollary \ref{cor_polynomial}]
By convexity, we have $(j^\alpha-i^{\alpha})/(j-i)\le \alpha j^{\alpha-1}$ for every $j>i$ and thus
\begin{align*}
\frac{\lambda_i\lambda_j}{(\lambda_i-\lambda_j)^2}=\frac{i^\alpha j^\alpha}{(j^\alpha-i^\alpha)^2}\geq \alpha^{-2}\frac{i^\alpha j^{2-\alpha}}{(j-i)^2}\qquad\forall j>i.
\end{align*}
We now assume for simplicity that $d$ is even and choose $\JJ=\{d/2+1,d/2+2,\dots, d+d/2\}$ such that $|\JJ|=d$. Then 
\begin{align*}
&\sum_{i\in \II\cap \JJ}\sum_{j\in \JJ\setminus \II}\Big(\frac{\lambda_i\lambda_j}{n(\lambda_i-\lambda_j)^2}\wedge \frac{1}{|\JJ|}\Big)=\sum_{d/2<i\leq d}\sum_{d<j\leq 3d/2}\Big(\frac{\lambda_i\lambda_j}{n(\lambda_i-\lambda_j)^2}\wedge \frac{1}{d}\Big)\\
&\geq \sum_{d/2<i\leq d}\sum_{d<j\leq 3d/2}\Big(\frac{\alpha^{-2}3^{-\alpha}d^2}{n(j-i)^2}\wedge \frac{1}{d}\Big)\geq \sum_{k=1}^{d/2}\Big(\frac{\alpha^{-2}3^{-\alpha}d^2}{nk}\wedge \frac{k}{d}\Big),
\end{align*}
where we used in the last inequality that for $k\leq d/2$ the number of indices $(i,j)$ in the double sum satisfying $j-i=k$ is equal to $k$. 
We conclude that 
\begin{align*}
\sum_{i\in \II\cap \JJ}\sum_{j\in \JJ\setminus \II}\Big(\frac{\lambda_i\lambda_j}{n(\lambda_i-\lambda_j)^2}\wedge \frac{1}{|\JJ|}\Big)\geq c\sum_{k=1}^{d/2}\Big(\frac{(d/2)^2}{nk}\wedge \frac{k}{(d/2)}\Big)
\end{align*}
with $c=\alpha^{-2}3^{-\alpha}2^{-1}$. Hence Corollary \ref{cor_polynomial} follows from Theorem \ref{thm_minimax_Hilbert} and the following lemma applied with $x=(d/2)^2/n$ and $m=d/2$.
\begin{lemma}\label{lem_elementary}
For every $m\geq 1$ and every $x\geq 0$, we have 
\begin{align*}
\sum_{k=1}^{m}\Big(\frac{x}{k}\wedge \frac{k}{m}\Big)\geq c\cdot\min\Big(m,x+x\log_+\Big(m\wedge\sqrt{\frac{m}{x}}\Big)\Big)
\end{align*}
for some absolute constant $c>0$.
\end{lemma}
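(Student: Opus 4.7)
The plan is to analyse the sum by comparing $k$ to the crossover point $\sqrt{xm}$, at which the two expressions $x/k$ and $k/m$ are equal. Three regimes naturally arise, depending on whether $\sqrt{xm}$ falls below $1$, inside $[1,m]$, or above $m$.

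Regime A ($x \geq m$, so $\sqrt{xm} \geq m$): every $k \in \{1,\dots,m\}$ satisfies $k/m \leq 1 \leq x/k$, hence the sum equals $\sum_{k=1}^m k/m = (m+1)/2 \geq m/2$. Since the right-hand side is at most $m$, this closes the case with $c = 1/2$. Regime B ($xm \leq 1$, so $\sqrt{xm} \leq 1$): every $k \geq 1$ satisfies $x/k \leq x \leq 1/m \leq k/m$, so the sum is $xH_m$. Using $H_m \geq (1+\log m)/2$ for $m \geq 1$ and noting that the target reduces to $\min(m, x(1+\log m))$, the bound follows.

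The substantive case is Regime C, where $1 < xm$ and $x < m$. Set $K=\lfloor \sqrt{xm}\rfloor \in \{1,\dots,m-1\}$ and split
\begin{align*}
\sum_{k=1}^m \Big(\frac{x}{k} \wedge \frac{k}{m}\Big) \geq \sum_{k=1}^{K}\frac{k}{m} + \sum_{k=K+1}^{m}\frac{x}{k} \geq \frac{K(K+1)}{2m} + x\log\frac{m+1}{K+1}.
\end{align*}
The first term can be shown to be at least $x/8$ by treating the subcases $xm \geq 4$ (where $K \geq \sqrt{xm}/2$) and $1 < xm < 4$ (where $K \geq 1$ and $x < 4/m$). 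For the second term, $K+1 \leq 2\sqrt{xm}$ gives $(m+1)/(K+1) \geq (1/2)\sqrt{m/x}$, yielding a lower bound of $(x/2)\log(m/x) - x\log 2$. Since here $\sqrt{m/x} \leq m$, the target equals $\min(m, x + (x/2)\log(m/x))$, so it remains to compare our two pieces with $x + (x/2)\log(m/x)$. A further case split on whether $m/x$ exceeds some fixed absolute threshold does the job: above the threshold, the logarithmic term in the lower bound dominates the offset $x \log 2$ and matches $(x/2)\log(m/x)$ up to a constant; below it, the target is bounded by a constant multiple of $x$, which the initial piece $x/8$ already controls.

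The main obstacle is precisely this final constant-chasing in Regime C: the additive $-x\log 2$ from the harmonic-tail integral estimate can, for moderate $\log(m/x)$, cancel the $x/8$ from the first piece, so the two contributions must be balanced by the secondary case distinction above in order to extract a universal $c > 0$. The remaining work, after Regime C is settled, is only to concatenate the three regimes and observe that the right-hand side of the target is a minimum with $m$, so Regime A controls the case in which the other expression exceeds $m$.
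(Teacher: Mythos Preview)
Your proof is correct and follows essentially the same strategy as the paper's: the same three-regime split according to the location of the crossover $\sqrt{xm}$, the same partition of the sum at (approximately) that crossover in the middle regime, and the same harmonic-tail integral estimate. The only difference is cosmetic: in Regime~C the paper avoids your secondary case distinction by first shrinking the coefficient on the logarithmic term (writing $x/4 + x\log(\cdot) \geq x/8 + (x/8)\log(e\cdot(\cdot))$, which absorbs the additive offset directly), whereas you handle the offset $-x\log 2$ by splitting on whether $m/x$ exceeds a threshold; both arguments are valid and yield an absolute constant.
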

It remains to prove Lemma \ref{lem_elementary}. If $x\geq m$, then we have 
\begin{align}\label{eq_large_x}
(I):=\sum_{k=1}^{m}\Big(\frac{x}{k}\wedge \frac{k}{m}\Big)=\sum_{k=1}^{m}\frac{k}{m}=\frac{m+1}{2}.
\end{align}
On the other hand if $x\leq 1/m$, then 
\begin{align}\label{eq_small_x}
(I)=\sum_{k=1}^{m}\frac{x}{k}\geq x\int_1^{m+1}\frac{1}{t}\,dt=x\log(m+1).
\end{align}
Finally, if $1/m<x<m$, then let $k_0\geq 2$ be the smallest natural number in $\{1,\dots,m\}$ such that $x/k_0\leq k_0/m$. Then we have
\begin{align*}
(I)=\sum_{k=1}^{k_0-1}\frac{k}{m}+\sum_{k=k_0}^{m}\frac{x}{k}\geq \frac{k_0(k_0-1)}{2m}+x\log\Big(\frac{m+1}{k_0}\Big),
\end{align*}
where the second sums is treated as in \eqref{eq_small_x}. Using the definition of $k_0$, we get
\begin{align*}
(I)&\geq x\frac{k_0-1}{2k_0}+x\log\Big(\frac{m+1}{\sqrt{xm}+1}\Big)\geq \frac{x}{4}+x\log\Big(\frac{m+1}{\sqrt{xm}+1}\Big).
\end{align*}
Hence, for $1/m<x<m$, we get 
\begin{align}\label{eq_middle_x}
(I)\geq \frac{x}{8}+\frac{x}{8}\log\Big(\frac{e(m+1)}{\sqrt{xm}+1}\Big)\geq \frac{x}{8}+\frac{x}{8}\log\Big(\sqrt{\frac{m}{x}}\Big).
\end{align}
Collecting \eqref{eq_large_x}--\eqref{eq_middle_x}, the claim follows.
\end{proof}

\begin{proof}[Proof of \eqref{eq_matrix_exp}]
The second and third power of 
\begin{align*}
\xi=\sum_{j\neq i}x_j(e_ie_j^T-e_je_i^T),\qquad  \sum_{j\neq i}x_j^2=1,
\end{align*}
are given by
\begin{align*}
\xi^2=-e_ie_i^T-\sum_{j\neq i}\sum_{k\neq i}x_jx_ke_je_k^T\quad\text{and}\quad \xi^3=-\xi,
\end{align*}
as can be seen from the fact that $e_1,\dots,e_p$ is an orthonormal basis and the relation $\sum_{j\neq i}x_j^2=1$. Hence, we have $\xi^{2n-1}=(-1)^{n-1}\xi$ and $\xi^{2n}=(-1)^{n-1}\xi^2$ for all $n\geq 1$. Combining these relations with $\exp(t\xi)=\sum_{n\geq 0}(t\xi)^n/n!$ and $\xi^0=I_p$, we get
\begin{align*}
\exp(t\xi)_{ii}&=e_i^T\exp(t\xi)e_i=1-\sum_{n\geq 1}(-1)^{n-1}\frac{t^{2n}}{(2n)!}=\cos (t),\\
\exp(t\xi)_{ji}&=e_j^T\exp(t\xi)e_i=-x_j\sum_{n\geq 1}(-1)^{n-1}\frac{t^{2n-1}}{(2n-1)!}=-x_j\sin (t),\quad j\neq i,
\end{align*}
which gives the claim.
\end{proof}

\begin{proof}[Proof of \eqref{eq_chi_square_comp_density}]
For the first claim, note that we have $\Pi\circ R_{-1}(1)=1-\Pi\circ R_{-1}(-1)=1-q$ and thus
\begin{align*}
\chi^2(\Pi\circ R_{-1},\Pi)=(1-q)\Big(\frac{q}{1-q}\Big)^2+q\Big(\frac{1-q}{q}\Big)^2-1=\frac{(1-2q)^2}{q(1-q)}.
\end{align*}
To see the second claim, let $P_{g}$ be the distribution of observation $X_1$ with density $f_g$. Then we have
\begin{align*}
 \chi^2(P_{-1},P_1)&=\int_{-1}^1(f_{-1}(x)-f_1(x))^2\frac{1}{f_1(x)}\,dx\\
 &\leq 4\int_{-1}^1(f_{-1}(x)-f_1(x))^2\,dx=2^5c_0^2h^{2\beta+1}\|K\|_{L^2}^2,
\end{align*}
where the inequality follows from the fact that $f_{1}(x)\geq 1/4$ for all $x\in[-1,1]$  by construction.
Hence, by \eqref{EqChiSquareP1}, we have
\begin{align*}
\chi^2(\mathbb{P}_{-1},\mathbb{P}_1)=(1+\chi^2(P_{-1},P_1))^n-1\leq e^{2^5c_0^2h^{2\beta+1}n\|K\|_{L^2}^2}-1,
\end{align*}
which gives the second claim.
\end{proof}

\subsection*{Acknowledgement} The author would like to thank Markus Rei\ss{}, Holger K\"osters and the two anonymous referees for their helpful comments and remarks and Alain Rouault for very helpful discussions and comments.

\bibliographystyle{plain}
\bibliography{lit}

\end{document}